\documentclass[12pt]{IEEEtran}


\usepackage{amsmath}
\usepackage{amsthm}
\usepackage{algorithm}
\usepackage{tikz}
\usepackage{graphicx}
\usepackage{authblk}
\usepackage{mathtools}
\usepackage{tikz}
\usetikzlibrary{arrows,automata}
\usepackage{amsfonts}
\usepackage[noend]{algpseudocode}
\usepackage[noadjust]{cite}
\usepackage{multirow}
\usepackage{balance}
\usepackage{url}
\usepackage{colortbl}

\ifCLASSOPTIONcompsoc
\usepackage[caption=false,font=normalsize,labelfont=sf,textfont=sf]{subfig}
\else
\usepackage[caption=false,font=footnotesize]{subfig}
\fi

\title{Linear Encodings for Polytope Containment Problems}
\author{Sadra Sadraddini, ~\IEEEmembership{Member}, and Russ Tedrake, ~\IEEEmembership{Member} \thanks{
The authors are with Computer Science and Artificial Intelligence Laboratory (CSAIL) at Massachusetts Institute of Technology, 32 Vassar st, Cambridge, MA 02139,
\texttt{\{sadra,russt\}@mit.edu}}}

\theoremstyle{remark}
\newtheorem{define}{Definition}
\newtheorem{example}{Example}

\newtheorem{lemma}{Lemma}
\newtheorem{remark}{Remark}

\newtheorem{theorem}{Theorem}
\newtheorem{corollary}{Corollary}
\newtheorem{proposition}{Proposition}

\DeclareMathOperator{\convh}{{Convexhull}}

\DeclareMathOperator{\st}{{such~ that~}}

\DeclareMathOperator{\blk}{{blk}}

\DeclareMathOperator{\XX}{{\mathbb{X}}}
\DeclareMathOperator{\YY}{{\mathbb{Y}}}

\DeclareMathOperator{\range}{{\text{range}}}
\DeclareMathOperator{\kernel}{{\text{ker}}}

\DeclareMathOperator{\proj}{{\text{proj}}}
\DeclareMathOperator{\ball}{{\text{Ball}}}
\DeclareMathOperator{\reduced}{{\text{Redu.}}}

\begin{document}
\maketitle

\begin{abstract}
The polytope containment problem is deciding whether a polytope is a contained within another polytope. This problem is rooted in computational convexity, and arises in applications such as verification and control of dynamical systems. The complexity heavily depends on how the polytopes are represented. Describing polytopes by their hyperplanes (H-polytopes) is a popular representation. In many applications we use affine transformations of H-polytopes, which we refer to as AH-polytopes. Zonotopes, orthogonal projections of H-polytopes, and convex hulls/Minkowski sums of multiple H-polytopes can be efficiently represented as AH-polytopes. 
While there exists efficient necessary and sufficient conditions for AH-polytope in H-polytope containment, the case of AH-polytope in AH-polytope is known to be NP-complete. In this paper, we provide a sufficient condition for this problem that is cast as a linear program with size that grows linearly with the number of hyperplanes of each polytope. Special cases on zonotopes, Minkowski sums, convex hulls, and disjunctions of H-polytopes are studied. These efficient encodings enable us to designate certain components of polytopes as decision variables, and incorporate them into a convex optimization problem. We present examples on the zonotope containment problem, polytopic Hausdorff distances, zonotope order reduction, inner approximations of orthogonal projections, and demonstrate the usefulness of our results on formal controller verification and synthesis for hybrid systems.         
\end{abstract}

\begin{IEEEkeywords}
Polytope Containment Problem, Zonotopes, Formal Synthesis and Verification
\end{IEEEkeywords}

\thispagestyle{empty}
\pagestyle{empty}

\section{Introduction}

\IEEEPARstart{W}{e} are interested in establishing the conditions for the following relation to hold:
\begin{equation}
\label{eq_ana}
\mathbb{X} \subseteq \mathbb{Y},
\end{equation}
where $\mathbb{X}, \mathbb{Y} \subset \mathbb{R}^n$ are polytopes. We call \eqref{eq_ana} the \emph{polytope containment problem}, which is a subfamily of set containment problems (SCPs) \cite{freund1985complexity,mangasarian2002set}. We refer to $\mathbb{X}$ in \eqref{eq_ana} as the \emph{inbody}, and $\mathbb{Y}$ as the \emph{circumbody}. The decision problem \eqref{eq_ana} appears in applications such as computational geometry \cite{ghosh1990solution}, machine learning \cite{fung2003knowledge}, and control theory \cite{han2016enlarging}. For example, the inbody can represent the reachable states of a dynamical system, while the circumbody is the target set of states. In this context, the reachability verification problem becomes a polytope containment problem. When the inbody is characterized by the parameters of a controller, synthesizing the controller requires finding the parameters subject to \eqref{eq_ana}. The focus of this paper is not only providing a Boolean answer to \eqref{eq_ana}, but finding an efficient linear encoding, so \eqref{eq_ana} can be added to the constraints of a (potentially mixed-integer) linear/quadratic program. 

The complexity of writing \eqref{eq_ana} as linear constraints heavily depends on how the inbody and the circumbody are represented. In general, there are two fundamental ways to represent a polytope: representation by hyperplanes (H-polytope), or representation by vertices (V-polytope). H-polytopes are almost always preferred in high dimensions as the number of vertices is often very large. For example, a box in $n$ dimensions has $2n$ hyperplanes, but $2^n$ vertices. There exist several algorithms for conversions between H-polytopes and V-polytopes \cite{avis1992pivoting,bremner1998primal}, but their worst-case complexities are exponential in the number of dimensions, making it often impractical to navigate between H-polytopes and V-polytopes beyond 2 or 3 dimensions. Therefore, we do not focus on V-polytopes in this paper. 
\subsection{Problem Statement}

We consider the following generic form of \eqref{eq_ana}, where
\begin{equation}
\label{eq_ata}
\mathbb{X}=\bar{x} + X \mathbb{P}_x, \mathbb{Y}= \bar{y} + Y \mathbb{P}_y,
\end{equation}
where $\bar{x}, \bar{y} \in \mathbb{R}^n$, ${X} \in \mathbb{R}^{n \times n_x}, Y \in \mathbb{R}^{n \times n_y}$, and $\mathbb{P}_x \subset \mathbb{R}^{n_x}, \mathbb{P}_y \subset \mathbb{R}^{n_y}$ are given H-polytopes. In other words, $\mathbb{X}$ and $\mathbb{Y}$ are affine transformations of H-polytopes, which we refer to as AH-polytopes. AH-polytopes are very expressive. For example, one can write Minkowski sums and convex hulls of multiple H-polytopes as an AH-polytope. Zonotopes, which are widely used in estimation and control theory \cite{girard2005reachability,alamo2005guaranteed}, are affine transformations of boxes. While one can use a quantifier elimination method, such as the Fourier-Motzkin elimination method \cite{dantzig1972fourier}, to find the H-polytope form of an AH-polytope, it may lead to an exponential number of hyperplanes. We desire to cast \eqref{eq_ana} using \eqref{eq_ata} without explicitly computing H-polytope forms of $\XX$ or $\YY$.  

\subsection{Main Contributions}
Using convex analysis \cite{grunbaum1967convex,rockafellar2015convex}, we provide a set of linear constraints that provide sufficient conditions for \eqref{eq_ana}. We show that the conditions become necessary when $Y$ in \eqref{eq_ana} has a left-inverse. We provide interpretations for the conservativeness of our sufficient conditions. In our examples, we empirically find that the conservativeness is very small for zonotope containment problems. We also provide necessary and sufficient conditions when disjunctions of multiple instances of \eqref{eq_ana} for the same $\mathbb{X}$ and different H-polytope circumbodies are given. We show applications of our results on several problems for which existing methods are not satisfactory. We provide alternative methods for computing Hausdorff distance between polytopes, zonotope order reduction \cite{yang2018comparison}, inner approximation of orthogonal projections, and built upon our prior work in \cite{sadraddini2018sampling} to compute explicit model predictive control (MPC) schemes for constrained hybrid systems. 
\subsubsection*{Software}
The scripts of our results are available in a python package called \texttt{pypolycontain} \footnote{Available in \url{github.com/sadraddini/pypolycontain}}. It consists of a library of practically useful versions of $\eqref{eq_ana}$ encoded as (mixed-integer) linear constraints so they can be added to an optimization problem using Gurobi \cite{gurobi}.

\subsection{Related Work}
The polytope containment problem drew attention in the mathematics and optimization literature decades ago. Containment problems for convex sets are closely related to their dual characterization, support functions, and polar sets \cite{goberna2006dual,rockafellar2015convex}. Containment is also closely related to the S-lemma (or S-procedure) \cite{jonsson2001lecture}, which is often used for implications of quadratic inequalities. Freund and Orlin  \cite{freund1985complexity} studied the polytope containment problem for H-polytopes and V-polytopes, and proved that deciding if a H-polytope is contained within a V-polytope is co-NP-complete. The remaining three combinations (H-polytope inside H-polytope, V-polytope inside H-polytope, and V-polytope inside V-polytope) can be decided in polynomial time. 
Grittzman and Klee \cite{gritzmann1994complexity} studied a broad range of polytopic containment problems, with a particular focus on finding the inner and outer radius of H-polytopes and V-polytopes, as well as parallelotopes (a limited family of zonotopes). However, they did not report results on polytopic containment problems for zonotopes and other affine transformations of H-polytopes. The authors in \cite{kellner2013containment} extended the work in \cite{freund1985complexity} with 5 new cases where the inbody or circumbody could be spectrahedra - convex bodies that characterize the feasible set of semidefinite programs. Tiwary \cite{tiwary2008hardness} showed that the deciding whether an H-polytope is equivalent to the convex hull or the Minkowski sum of two H-polytopes is NP-complete. The proof relied on the fact that such a decision problem must involve vertex/facet enumeration. More recently, Kellner \cite{kellner2015containment} proved that polytope containment problem for projections of H-polytopes is also NP-complete, and cast \eqref{eq_ata} as a bilinear optimization problem, which can be solved using sequential semidefinite programs. This paper provides linear sufficient conditions for this problem. In the controls literature, the polytope containment problem is relevant to proving correctness of the controllers, and various versions of \eqref{eq_ata} have been applied for designing robust control invariant sets and tube model predictive controllers \cite{rakovic2007optimized,rakovic2010parameterized}. The authors in \cite{han2016enlarging} proposed a sufficient condition for a special case of zonotope containment problems, which were used to compute backward reachable sets of dynamical systems. Here we show that for the special case in \cite{han2016enlarging}, necessary and sufficient conditions actually exist, while we provide a sufficient result for general zonotopes. 

\subsection{Organization}
This paper is organized as follows. In Section \ref{sec_prelim}, we provide the necessary notation and formalize the terms used in this paper. We provide the main result in Section \ref{sec_main}. Next, we focus on special cases of Zonotopes, Minkowski sums, convex hulls, and disjunctive containment problems in Section \ref{sec_special}. Examples are provided throughout the paper, as well as a case study on explicit model predictive control in Section \ref{sec_control}.

\section{Preliminaries}
\label{sec_prelim}
The set of real and non-negative real numbers are denoted by $\mathbb{R}$ and $\mathbb{R}_+$, respectively. 
Given matrices $A_1,A_2$ of appropriate dimensions, we use $[A_1,A_2]$, $(A_1,A_2)$, and $\blk(A_1,A_2)$ to denote the matrices obtained by stacking $A_1$ and $A_2$ vertically, horizontally, and block-diagonally, respectively. 
 Given $\mathbb{S} \subset \mathbb{R}^n$ and $A {\in} \mathbb{R}^{n_A \times n}$, we interpret $A\mathbb{S}$ as $\{As | s \in \mathbb{S}\}$. Given two sets $\mathbb{S}_1, \mathbb{S}_2 \subset \mathbb{R}^n$, their Minkowski sum is denoted by $\mathbb{S}_1 \oplus \mathbb{S}_2 = \{s_1+s_2 | s_1 \in \mathbb{S}_1, s_2 \in \mathbb{S}_2\}$. Given $s \in \mathbb{R}^n$, $s+\mathbb{S}$ is interpreted as $\{s\} \oplus \mathbb{S}$. 
Given sets $\mathbb{S}_i \subset \mathbb{R}^n,i=1,\cdots,N$, their convex hull is defined as:
\begin{equation}
\convh(\{\mathbb{S}_i\}_{i=1,\cdots,N})=\bigoplus_{i=1}^N \lambda_i \mathbb{S}_i,
\end{equation}
where $\lambda_i \ge 0, i=1,\cdots,N$ and $\sum_{i=1}^N \lambda_i=1$.

Given matrix $A$, we use $\range(A)$ and {$\kernel(A)$ to denote its column-space and null-space, and $A'$ and $A^\dagger$ to denote its transpose and Moore-Penrose inverse, respectively. The matrix $|A|$ is the matrix obtained by taking the absolute values of $A$, element-wise. The infinity norm of matrix $A$ is denoted by $\left\|A\right\|_\infty$, which is the maximum absolute row sum. The identity matrix and the vector of all ones are denoted by $I$ and $\underbar{1}$, where the dimension is unambiguously interpretable from the context. All matrix inequality relations are interpreted element-wise in this paper.

A \emph{polyhedron} $\mathbb{P} \subset \mathbb{R}^n$ is a set that can be represented as the intersection of a finite number of closed half-spaces in the form $\mathbb{P}=\{ x \in \mathbb{R}^n | H x \le h\}$, where $H \in \mathbb{R}^{n_H \times n}, h \in \mathbb{R}^{n_H}$ define the hyperplanes.  A bounded polyhedron is called a \emph{polytope}.
Polytopes are closed under affine transformations, Minkowski sums, convex hulls, and intersections.

\begin{define}[H-Polytope]\cite{ziegler2012lectures}
An H-polytope $\mathbb{P}$ is a polytope which its hyperplanes are available: $\mathbb{P}=\{x \in \mathbb{R}^n | H x \le h\}$, $\kernel(H)=\{0\}$, where $n$ is the dimension of the polytope and $H,h$ are matrices defining the hyperplanes with appropriate dimensions. 
\end{define}

\begin{define}[AH-polytope]
An AH-polytope $\mathbb{X} \subset \mathbb{R}^{n}$ is a polytope that is given as an affine transformation of an H-polytope $\mathbb{P} \subset \mathbb{R}^m$, $\mathbb{X}=\bar{x}+X \mathbb{P}$, where $X \in \mathbb{R}^{n \times m}, \bar{x} \in \mathbb{R}^n$.
\end{define}
As mentioned earlier, converting an AH-polytope to its equivalent H-polytope may have an exponential complexity. The special case in which conversion is simple is when $X$ has a left inverse, in which case we have:
\begin{equation} 
\begin{array}{ll}
\{ \bar{x}+Xx | Hx \le h\} & \\
= \{ y \in \mathbb{R}^n | HX^{\dagger} y \le h + HX^{\dagger}\bar{x}\}, & X^\dagger X=I.
\end{array}
\end{equation}

Given two H-polytopes $\mathbb{P}_i=\{x \in \mathbb{R}^{n} | H_i x \le h_i\}, i=1,2$, their intersection can be easily represented as the following H-polytope: $
\mathbb{P}_1 \cap \mathbb{P}_2=\{x \in \mathbb{R}^{n} | [H_1,H_2] x \le [h_1,h_2]\}.
$
However, the H-polytope form of $\mathbb{P}_1 \oplus \mathbb{P}_2$ is not easy to obtain.  
Unlike H-polytopes, AH-polytopes are suitable for affine transformations and Minkowski sums, while the case of intersections is less trivial but still possible. Let $\mathbb{X}_i=\bar{x}_i+X_i\mathbb{P}_i, \mathbb{P}_i=\{z \in \mathbb{R}^{n_i} | H_i z \le h_i\}, i=1,2, X_i \in \mathbb{R}^{n \times {n_i}}, \bar{x}_i \in \mathbb{R}^n$, be two AH-polytopes. Note that, similar to H-polytopes, multiple AH-polytopes can represent the same polytope. 
\begin{itemize}
\item (Affine transformation) Given $g \in \mathbb{R}^q, G \in \mathbb{R}^{q \times n}$, we have:
\begin{equation}
G(\bar{x}+X\mathbb{P})+g=(G\bar{x}+g)+GX\mathbb{P}.
\end{equation}

\item (Minkowski Sum) We have the following relation:
\begin{equation}
(\bar{x}_1+X_1\mathbb{P}_1) \oplus (\bar{x}_2+X_2\mathbb{P}_2) = \bar{x}_1+\bar{x}_2+ (X_1,X_2)\mathbb{P}_{\oplus},
\end{equation}
where 
$$
\mathbb{P}_{\oplus}=\{ z \in \mathbb{R}^{n_1+n_2} | \blk(H_1,H_2)z \le [h_1,h_2] \}.
$$

\item (Intersection) from $\bar{x}_1+X_1 p_1=\bar{x}_2+X_2 p_2$ we obtain $p_2=X_2^\dagger (X_1 p_1 + \bar{x}_1-\bar{x}_2)+(I-X_2^\dagger X_2)w, w \in \mathbb{R}^{n_2}$. Therefore, we have the following relation:
\begin{equation}
(\bar{x}_1+X_1\mathbb{P}_1) \cap (\bar{x}_2+X_2\mathbb{P}_2) = (X_1,0) \mathbb{P}_{\cap},
\end{equation}
where  
$$
\begin{array}{ll}\mathbb{P}_{\cap}=& \{ z \in \mathbb{R}^{n_1+n_2} |[(H_1,0),(H_2X_2^\dagger X_1, I-X_2^\dagger X_2)]z\\
& \le [h_1,h_2+X_2^\dagger(\bar{x}_2-\bar{x}_1)] \}.
\end{array}
$$

\end{itemize}

\begin{define}[Unit Box]
The $n$-dimensional unit box, or the unit ball corresponding to $L_\infty$ norm, denoted by $\mathbb{B}_n$, is defined as $\mathbb{B}_n:=\{ x\in \mathbb{R}^n | \left|x\right\|_\infty \le 1\}$. Its H-polytope form is $\mathbb{B}_n= \{x \in \mathbb{R}^n | [I,-I] x \le \underbar{1}\}$. 
\end{define}
\begin{define}[Full-Dimensional Polytope]
A polytope $\mathbb{P} \subset \mathbb{R}^n$ is \emph{full-dimensional} if there exists $\bar{x} \in \mathbb{P}$, $\epsilon>0$, such that $\bar{x}+\epsilon \mathbb{B}_n \subset \mathbb{P}$. 
\end{define}

\begin{define}[Zonotope]
A \emph{zonotope} $\mathbb{Z}$ is a polytope that can be written as an affine transformation of the unit box $\mathbb{Z}:= \langle \bar{x},X \rangle = \bar{x} + X \mathbb{B}_{n_x}$, where $\bar{x} \in \mathbb{R}^n$ is the center and $X \in \mathbb{R}^{n \times n_x}$ is the generator matrix. The zonotope order is defined as $\frac{n_x}{n}$.
\end{define}
Zonotopes are a special case of AH-polytopes. An appealing feature of zonotopes is its operational convenience with Minkowski sums:
\begin{equation*}
\langle \bar{x}_1,X_1 \rangle \oplus \langle \bar{x}_2,X_2 \rangle = \langle \bar{x}_1 + \bar{x}_2, (X_1,X_2) \rangle.
\end{equation*} 
In practice, most zonotopes have order greater than one. Finding the H-polytope version of a zonotope requires facet enumeration, which its worst-case complexity is exponential in $n$ and ${n_x}$ \cite{althoff2015computing},  the number of rows and columns of the generator, respectively. 

\section{Main Results}
\label{sec_main}
In this section, we provide the main result of this paper in Theorem \ref{thm_main}. First, we restate the well-known result on H-polytope in H-polytope containment.  

\begin{lemma}[H-Polytope in H-Polytope]
\label{lemma_HH}
Let $\mathbb{X}=\{x \in \mathbb{R}^n | H_x x \le h_x\}, \mathbb{Y}=\{y \in \mathbb{R}^n | H_y y \le h_y\} \subset \mathbb{R}^{n}$, $H_x \in \mathbb{R}^{q_x \times n}, H_y \in \mathbb{R}^{q_y \times n}$. We have $\XX \subseteq \YY$ if and only if
\begin{equation}
\label{eq_lemma}
\exists \Lambda \in \mathbb{R}_+^{q_y \times q_x} \st \Lambda H_x= H_y, \Lambda h_x \le h_y.
\end{equation}
\end{lemma}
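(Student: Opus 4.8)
The plan is to prove the two directions separately, with the forward (sufficiency) direction being an elementary algebraic manipulation and the reverse (necessity) direction resting on linear programming duality.

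For sufficiency, I would assume a feasible $\Lambda$ exists and take an arbitrary $x \in \mathbb{X}$, so that $H_x x \le h_x$. Because $\Lambda \ge 0$, left-multiplying this inequality by $\Lambda$ preserves its direction, yielding $\Lambda H_x x \le \Lambda h_x$. Substituting the feasibility conditions $\Lambda H_x = H_y$ and $\Lambda h_x \le h_y$ then gives $H_y x \le \Lambda h_x \le h_y$, hence $x \in \mathbb{Y}$. Since $x$ was arbitrary, $\mathbb{X} \subseteq \mathbb{Y}$.

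For necessity, I would argue one hyperplane of $\mathbb{Y}$ at a time. Denote by $(H_y)_i$ and $(h_y)_i$ the $i$-th row and entry of $H_y$ and $h_y$. The containment $\mathbb{X} \subseteq \mathbb{Y}$ is equivalent to the statement that each half-space constraint of $\mathbb{Y}$ is valid over all of $\mathbb{X}$, i.e. $(H_y)_i x \le (h_y)_i$ for every $x$ with $H_x x \le h_x$; equivalently, the linear program $\max \{ (H_y)_i x : H_x x \le h_x \}$ has optimal value at most $(h_y)_i$. This is exactly the setting of the affine (inhomogeneous) form of Farkas' lemma, equivalently LP duality: assuming $\mathbb{X} \neq \emptyset$, the implication ``$H_x x \le h_x \Rightarrow (H_y)_i x \le (h_y)_i$'' holds if and only if there exists a nonnegative vector $\lambda_i \in \mathbb{R}_+^{q_x}$ with $\lambda_i' H_x = (H_y)_i$ and $\lambda_i' h_x \le (h_y)_i$. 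I would extract one such $\lambda_i$ for each $i \in \{1,\ldots,q_y\}$ and stack the transposes $\lambda_i'$ as the rows of $\Lambda$, which immediately delivers $\Lambda \ge 0$, $\Lambda H_x = H_y$, and $\Lambda h_x \le h_y$.

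The main obstacle is the clean invocation of Farkas' lemma in the necessity direction, together with the nonemptiness caveat: the duality argument presupposes $\mathbb{X} \neq \emptyset$, and the degenerate empty case (where the containment holds vacuously) would have to be acknowledged separately. I would also note that the relevant LP attains its optimum since $\mathbb{X}$ is a polytope and hence bounded, so strong duality applies without any subtlety about unbounded or unattained optima.
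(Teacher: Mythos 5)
Your proof is correct and follows essentially the same route as the paper: both reduce containment to the validity of each half-space of $\mathbb{Y}$ over $\mathbb{X}$ and then dualize the resulting row-wise linear programs (your affine Farkas lemma is exactly the paper's LP dual of $\max_{x \in \mathbb{X}} H_{y,i}\,x$), stacking the multipliers $\lambda_i'$ into $\Lambda$. Your explicit handling of the empty-inbody caveat and of attainment via boundedness is a welcome refinement that the paper's proof leaves implicit.
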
 
\begin{proof}
The conditions in \eqref{eq_lemma} are equivalent to $\mathbb{X}$ being contained within each closed half-space of the hyperplanes in $\mathbb{Y}$. This condition is verified by checking $q_y$ inequalties: 
\begin{equation}
\label{eq_lemma_max}
\max_{x \in \XX} H_{y,i} x ~~\le~~ h_{y,i}, i=1,\cdots,q_y,
\end{equation}
where $H_{y,i}$ is the $i$'th row of $H_y$ (the same notation applies to $h_y$). By writing the dual of the left hand side in \eqref{eq_lemma_max}, we arrive at
\begin{equation}
\min_{u_i \in \mathbb{R}_+^{q_x}, u_i'H_x=H_{yi}} u_i'h_x ~~\le~~ h_{y,i}, i=1,\cdots,q_y,
\end{equation}
which is equivalent to $\exists u_i \in \mathbb{R}_+^{q_x}, u_i'H_x=H_{yi}$, such that $u_i'h_x \le h_{y,i}$. Let $\Lambda=[u'_1,u'_2,\cdots,u'_{q_y}]$, and \eqref{eq_lemma} immediately follows.  
\end{proof}

Instead of solving $q_y$ linear programs in \eqref{eq_lemma_max}, each with $n$ variables and $\mathcal{O}(q_x)$ constraints, we can solve one linear program in \eqref{eq_lemma} with $\mathcal{O}(q_xq_y)$ variables and constraints. In many cases, the former is more efficient. However, there is merit in \eqref{eq_lemma} as it can be added to a mathematical program to encode $\mathbb{X} \subseteq \mathbb{Y}$. Note that \eqref{eq_lemma} is lossless - it is necessary and sufficient. Now we state the main result of this paper.

\begin{theorem}[Sufficient Conditions for AH-polytope in AH-polytope]
\label{thm_main}
Let $\mathbb{X}=\bar{x} + X \mathbb{P}_x, \mathbb{Y}= \bar{y} + Y \mathbb{P}_y$, where $\mathbb{P}_x=\{x \in \mathbb{R}^{n_x} | H_x x \le h_x\}$ is a full-dimensional polytope, $\mathbb{P}_y=\{y \in \mathbb{R}^{n_y} | H_y y \le h_y\}$, where $q_x, q_y$ are number of rows in $H_x$ and $H_y$, respectively. Then we have $\XX \subseteq \YY$ if:
\begin{equation}
\exists \Gamma \in \mathbb{R}^{n_y \times n_x}, \exists \beta \in \mathbb{R}^{n_y}, \exists \Lambda \in \mathbb{R}_+^{q_y \times q_x}
\end{equation}
such that the following relations hold:
\begin{subequations}
\label{eq_theory_main}
\begin{equation}
\label{eq_param}
X=Y\Gamma, \bar{y}-\bar{x}=Y\beta,  
\end{equation}
\begin{equation}
\Lambda H_x = H_y \Gamma, \Lambda h_x \le h_y + H_y\beta. 
\end{equation}
\end{subequations}
\end{theorem}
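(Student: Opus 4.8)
The plan is to prove the containment pointwise by exhibiting, for each point of $\mathbb{X}$, an explicit preimage in $\mathbb{P}_y$ that certifies membership in $\mathbb{Y}$. I would take an arbitrary $z \in \mathbb{X}$ and write $z = \bar{x} + Xp$ with $p \in \mathbb{P}_x$, i.e. $H_x p \le h_x$. The goal is then to produce some $q \in \mathbb{R}^{n_y}$ satisfying $H_y q \le h_y$ (so $q \in \mathbb{P}_y$) together with $\bar{y} + Yq = z$, since this immediately yields $z \in \mathbb{Y}$.

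The crux is guessing the right candidate. Demanding $\bar{y} + Yq = \bar{x} + Xp$ and substituting the parametric identities $X = Y\Gamma$ and $\bar{y}-\bar{x} = Y\beta$ from \eqref{eq_param} forces $Yq = Y(\Gamma p - \beta)$, which motivates setting $q := \Gamma p - \beta$. With this choice the affine identity is automatic: $\bar{y} + Yq = \bar{y} + Y\Gamma p - Y\beta = \bar{y} + Xp - (\bar{y}-\bar{x}) = \bar{x} + Xp = z$.

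It would then remain to verify $q \in \mathbb{P}_y$, and here $\Lambda$ plays the role of a nonnegative Farkas-type certificate exactly as in Lemma \ref{lemma_HH}: using $\Lambda H_x = H_y\Gamma$ I would compute $H_y q = H_y\Gamma p - H_y\beta = \Lambda H_x p - H_y\beta$. Since $\Lambda \ge 0$ and $H_x p \le h_x$, left-multiplication preserves the inequality, giving $\Lambda H_x p \le \Lambda h_x$; combining with $\Lambda h_x \le h_y + H_y\beta$ produces $H_y q \le h_y$. Hence $q \in \mathbb{P}_y$ and $z \in \mathbb{Y}$, and as $z$ was arbitrary, $\mathbb{X} \subseteq \mathbb{Y}$.

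The only place needing care is the direction of the inequality, which is precisely why $\Lambda$ is constrained to be entrywise nonnegative; dropping this would invalidate the step $\Lambda H_x p \le \Lambda h_x$. I expect no deeper obstacle, since the four conditions in \eqref{eq_theory_main} are engineered so that the affine map $p \mapsto \Gamma p - \beta$ sends $\mathbb{P}_x$ into $\mathbb{P}_y$ while $Y$ transports this into the claimed containment. An equivalent high-level framing would be to invoke the ``if'' direction of Lemma \ref{lemma_HH} to conclude $\mathbb{P}_x \subseteq \{p \mid H_y(\Gamma p - \beta) \le h_y\}$ and then push forward by $w \mapsto \bar{y} + Yw$. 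I would also remark that full-dimensionality of $\mathbb{P}_x$ is not actually used in this sufficiency argument; it is relevant only to the subsequent tightness and necessity discussion.
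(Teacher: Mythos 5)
Your proof is correct, but it takes a genuinely different route from the paper's. The paper proves sufficiency via support functions and LP duality: it characterizes $\mathbb{X}\subseteq\mathbb{Y}$ by $\max_{x\in\mathbb{P}_x}c'(\bar x+Xx)\le\max_{y\in\mathbb{P}_y}c'(\bar y+Yy)$ for all $c\in\mathbb{R}^n$, dualizes the right-hand side, substitutes the parametrization \eqref{eq_param} to obtain the bilinear necessary-and-sufficient conditions \eqref{eq_nec_suf}, and then \emph{relaxes} by dropping the coupling constraint $u'H_y=c'Y$ so that $u$ ranges over the whole positive orthant, reducing the problem to the H-in-H containment $\mathbb{P}_x\subseteq\mathbb{Q}$ handled by Lemma \ref{lemma_HH}. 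Your argument is instead a direct pointwise verification: for $z=\bar x+Xp$ you exhibit the certificate $q=\Gamma p-\beta$, check $\bar y+Yq=z$ from \eqref{eq_param}, and use $\Lambda\ge 0$ as a Farkas-type multiplier to get $H_yq=\Lambda H_xp-H_y\beta\le \Lambda h_x-H_y\beta\le h_y$. This is essentially a formalization of the geometric remark \eqref{eq_sides} that the paper states only \emph{after} its proof, namely $-\beta+\Gamma\mathbb{P}_x\subseteq\mathbb{P}_y$ pushed forward by $Y$. Your route is shorter and more elementary; what the paper's longer dual derivation buys is an exact accounting of where conservativeness enters (replacing the cone $\{u\ge 0: H_y'u\in\range(Y)\}$ by the full positive orthant), which underpins the subsequent necessity results, and a justification that the parametrization \eqref{eq_param} loses nothing when $\mathbb{P}_x$ is full-dimensional. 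Your closing observation is also correct: full-dimensionality of $\mathbb{P}_x$ is not used anywhere in the sufficiency direction, only in the paper's argument that \eqref{eq_param} is always achievable when containment actually holds.
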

\begin{proof}
Since we do not have the hyperplanes of the circumbody, we need to specify the argument similar to \eqref{eq_lemma_max} for all directions in $\mathbb{R}^n$. Therefore, $\mathbb{X} \subseteq \mathbb{Y}$ is equivalent to:
\begin{equation}
\label{eq_support}
\forall c \in \mathbb{R}^n, \max_{x \in \mathbb{P}_x} c'(\bar{x}+Xx) \le \max_{y \in \mathbb{P}_y} c'(\bar{y}+Yy).
\end{equation}
We write the dual of the right hand side to arrive at:
\begin{equation}
\max_{x \in \mathbb{P}_x} c'(\bar{x}+Xx) \le \min_{u \in \mathbb{R}^{q_y}_+, u'H_y=c'Y} u'h_y + c'\bar{y}. 
\end{equation}
Since minimum of the right-hand side set is greater than the maximum of the left-hand side set, it implies that any element of the right-hand side set is greater than any element of the left-hand side set. Therefore, $\forall c \in \mathbb{R}^n, \forall u \in \mathbb{R}^{q_y}_+, u'H_y=c'Y, \forall x \in \mathbb{P}_x$, we must have the following relation:
\begin{equation}
\label{eq_main_condition}
 u'h_y + c'\bar{y} \ge c'Xx + c'\bar{x}. 
\end{equation}

First, we show that the parametrization in \eqref{eq_param} is always possible when $\mathbb{X} \subseteq \mathbb{Y}$ and $\mathbb{P}_x$ is full dimensional. There exists $p^0_x \in \mathbb{P}_x, \epsilon>0, p^0_y \in \mathbb{P}_y$ such that
$$
\bar{x}+Xp^0_x=\bar{y}+Y p^0_y, \bar{x}+X(p^0_x+\epsilon \mathbb{B}_{n_x}) \subset \mathbb{Y},
$$
which implies that $\bar{x}+X(p^0_x+\epsilon e_i)=\bar{y}+Y (p^0_y+\gamma_i),$ for some $\gamma_i \in \mathbb{R}^{n_y}, i=1,\cdots,n_x$, where $e_i$ is the unit vector in the $i$'th Cartesian direction. Therefore, $X e_i=Y\gamma_i, i=1,\cdots,n_x$. Thus, all columns of $X$ lie in $\range(Y)$. Moreover, $\bar{y}-\bar{x}=Xp^0_x-Y\bar{x}$, which also implies that $\bar{y}-\bar{x} \in \range(Y)$.

Now substitute $\bar{y}-\bar{x}$ with $Y \beta$ and $X$ with $Y\Gamma$, and finally $c'Y$ with $u'H_y$ in \eqref{eq_main_condition} to obtain:  
\begin{equation}
\label{eq_nec_suf}
\begin{array}{c}
\forall u \in \mathbb{R}_+^{q_y}, \forall c \in \mathbb{R}^n, u'H_y=c'Y, \forall x\in \mathbb{P}_x, \\
 u'(h_y + H_y\beta - H_y \Gamma x) \ge 0.
 \end{array}
\end{equation}
Until this point, every relation is necessary and sufficient for $\XX \subseteq \YY$. The conditions in \eqref{eq_nec_suf} is bilinear in $u$ and $x$. Furthermore, $H_y\Gamma$ is not necessarily positive definite, so we can not efficiently find the  minimum in \eqref{eq_nec_suf} and check if it is non-negative. Notice that $c$ does not appear directly in the bilinear expression, but we have $u'H=c'Y$. Even though $c$ is allowed to take all values in $\mathbb{R}^n$, $u$ is restricted - we postpone its characterization to the next theorem. 

By dropping $u'H=c'Y$, $u$ becomes only constrained to be non-negative, which means that $H_y + H_y\beta - H_y \Gamma x \ge 0$ for all $x \in \mathbb{P}_x$. Define H-polytope $\mathbb{Q}:= \{z \in \mathbb{R}^{n_y} | H_y \Gamma z \le h_y + H_y\beta \}$. This means that any point in $\mathbb{P}_x$ is also in $\mathbb{Q}$, or $\mathbb{P}_x \subseteq \mathbb{Q}$. Therefore, from Lemma \ref{lemma_HH} we have 
$
\Lambda H_x = H_y \Gamma, \Lambda H_x \le H_y + H_y\beta, \Lambda \ge 0, 
$
and the proof is complete.
\end{proof}

It is worth to note that Theorem \ref{thm_main} has the following geometrical interpretation. It implies that 
\begin{equation}
\label{eq_sides}
- \beta + \Gamma \mathbb{P}_x \subseteq \mathbb{P}_y.
\end{equation}
By left multiplying both sides in \eqref{eq_sides} by $Y$, $\XX \subseteq \YY$ is established.

Before moving to the necessary conditions, we remark that \eqref{eq_nec_suf} is a generalized form of the conditions reported in \cite{kellner2015containment}, where the authors considered containment problems for orthogonal projections of polytopes. {Note that $u$ is restricted to a cone that is the intersection of positive orthant and the linear subspace given by $\{ u \in \mathbb{R}^{q_y} | H_y' u \in \range(Y)\}$. Enumerating all the extreme rays of this cone is not possible in polynomial time}. The main idea in Theorem \ref{thm_main} is avoiding the bilinear terms using parameterization in \eqref{eq_param} and then relaxing $u$ to be in the positive orthant. The conservativeness can also be interpreted as replacing a cone in the positive orthant by the positive orthant itself {(a larger cone that does not depend on $H_y$ and $Y$)}. Now we state a sufficient condition in which \eqref{eq_theory_main} becomes necessary.  

\begin{theorem}
The conditions in \eqref{eq_theory_main} is necessary if 
\begin{equation}
\label{eq_nec}
\range(H_y'^\dagger Y') \oplus \kernel(H_y') = \mathbb{R}^{q_y}. 
\end{equation}
\end{theorem}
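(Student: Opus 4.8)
The plan is to show that hypothesis \eqref{eq_nec} is exactly what makes the relaxation performed in the proof of Theorem \ref{thm_main} lossless, so that \eqref{eq_theory_main} becomes not merely sufficient but also necessary. I would start from the observation, noted right after \eqref{eq_nec_suf}, that up to that point every step in the proof of Theorem \ref{thm_main} is an equivalence: whenever $\XX \subseteq \YY$ and $\mathbb{P}_x$ is full-dimensional, the parametrization \eqref{eq_param} exists, and $\XX \subseteq \YY$ holds if and only if $u'(h_y + H_y\beta - H_y\Gamma x) \ge 0$ for every $x \in \mathbb{P}_x$ and every $u$ in the \emph{restricted} cone $\mathcal{C} := \{\, u \in \mathbb{R}_+^{q_y} \mid H_y' u \in \range(Y') \,\}$. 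The only place conservativeness enters is the replacement of $\mathcal{C}$ by the full orthant $\mathbb{R}_+^{q_y}$. Hence necessity is equivalent to the claim that this replacement loses nothing, and the entire burden of the proof is to deduce that from \eqref{eq_nec}.

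The central step I would carry out is to show that \eqref{eq_nec} forces $\mathcal{C} = \mathbb{R}_+^{q_y}$, equivalently that the subspace $V := \{\, u \mid H_y' u \in \range(Y') \,\}$ is all of $\mathbb{R}^{q_y}$. Reading the hypothesis as the direct-sum decomposition $\range((H_y')^\dagger Y') \oplus \kernel(H_y') = \mathbb{R}^{q_y}$, I would treat the two summands separately. The inclusion $\kernel(H_y') \subseteq V$ is immediate, since there $H_y'u = 0 \in \range(Y')$. For the other summand I would use that $\range((H_y')^\dagger Y') = \range(H_y)$ together with $\range(Y') \subseteq \range(H_y')$; the latter holds because the circumbody is a bounded polytope, so $H_y$ has trivial kernel and $\range(H_y') = \mathbb{R}^{n_y}$. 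Under these facts $\range((H_y')^\dagger Y') \subseteq V$ as well, and the direct sum yields $V = \mathbb{R}^{q_y}$, hence $\mathcal{C} = \mathbb{R}_+^{q_y}$. In fact, in this full-dimensional regime \eqref{eq_nec} collapses to $\range(Y') = \mathbb{R}^{n_y}$, i.e. to the statement that $Y$ admits a left inverse $Y^\dagger$ with $Y^\dagger Y = I$.

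With $\mathcal{C} = \mathbb{R}_+^{q_y}$, the exact condition \eqref{eq_nec_suf} becomes $h_y + H_y\beta - H_y\Gamma x \ge 0$ for every $x \in \mathbb{P}_x$, which is precisely $\mathbb{P}_x \subseteq \mathbb{Q}$; invoking Lemma \ref{lemma_HH} then produces the multiplier $\Lambda$ and recovers \eqref{eq_theory_main}. Equivalently, and more transparently, once $Y$ has a left inverse I would simply set $\Gamma = Y^\dagger X$ and $\beta = Y^\dagger(\bar y - \bar x)$ so that \eqref{eq_param} holds, and apply $Y^\dagger$ to the inclusion $\XX - \bar y \subseteq \YY - \bar y$ to obtain $-\beta + \Gamma\mathbb{P}_x \subseteq \mathbb{P}_y$ directly, which is \eqref{eq_sides}.

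The main obstacle I anticipate is the losslessness claim itself, namely proving $\mathcal{C} = \mathbb{R}_+^{q_y}$ (equivalently, that the dropped constraint $H_y'u \in \range(Y')$ is automatically met by each $u \ge 0$). This is exactly where the direct-sum structure in \eqref{eq_nec} is indispensable: without it one only knows that $h_y + H_y\beta - H_y\Gamma x$ lies in the larger dual cone $\mathcal{C}^{*} = H_y\kernel(Y) + \mathbb{R}_+^{q_y}$, from which nonnegativity need not follow, and the relaxation is genuinely conservative. Verifying the inclusion $\range((H_y')^\dagger Y') \subseteq V$ — the precise point at which the interplay between $\range(Y')$ and $\range(H_y')$ matters — is the delicate computation; it simplifies decisively for a full-dimensional circumbody, where the hypothesis reduces to left-invertibility of $Y$ and the argument becomes the one-line application of $Y^\dagger$ above.
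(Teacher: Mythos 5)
Your proposal is correct and follows essentially the same route as the paper: you show that the hypothesis \eqref{eq_nec} makes the subspace $\{u \in \mathbb{R}^{q_y} \mid H_y'u \in \range(Y')\}$ equal to all of $\mathbb{R}^{q_y}$, so that dropping the coupling constraint $u'H_y = c'Y$ in \eqref{eq_nec_suf} is lossless and Lemma \ref{lemma_HH} then produces $\Lambda$, which is precisely the paper's (terser) argument made explicit. Your further observation that, since $\kernel(H_y)=\{0\}$ forces $H_y'$ to be surjective and $(H_y')^\dagger$ to be injective, condition \eqref{eq_nec} is in fact \emph{equivalent} to $\rank(Y)=n_y$ (left-invertibility of $Y$) is also correct and mildly sharpens Corollary \ref{cor_inverse} from a sufficient condition to an equivalence, without changing the structure of the proof.
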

\begin{proof}
Since $u'H_y=c'Y$ and $c$ taking all possible values in $\mathbb{R}^n$, we have $u \in \range(H_y'^\dagger Y') \oplus {\ker(H'_y)} \cap \mathbb{R}_+^{q_y}$. If the subspace $\range(H_y'^\dagger Y') \oplus {\ker(H_y')}$ is actually the whole space, then $u$ only becomes constrained by $u \in \mathbb{R}_+^{q_y}$. Thus, $u'H_y=c'Y$ can be safely dropped without any loss.  
\end{proof}

We now provide a sufficient condition for \eqref{eq_nec} to hold.
 
\begin{corollary}
\label{cor_inverse}
The conditions in \eqref{eq_theory_main} {are} necessary and sufficient if $Y$ has a left-inverse, i.e. it has linearly independent columns.  
\end{corollary}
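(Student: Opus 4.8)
The plan is to derive this corollary directly from the preceding theorem. Since Theorem \ref{thm_main} already guarantees that \eqref{eq_theory_main} is \emph{sufficient} for $\XX\subseteq\YY$ with no assumption on $Y$, it remains only to establish \emph{necessity} under the hypothesis that $Y$ has a left inverse. By the preceding theorem, necessity holds whenever the decomposition \eqref{eq_nec} is valid, so the entire task reduces to verifying that $\range(H_y'^\dagger Y') \oplus \kernel(H_y') = \mathbb{R}^{q_y}$ when $Y$ has linearly independent columns.

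First I would record the purely linear-algebraic fact that, by the fundamental theorem of linear algebra applied to $H_y \in \mathbb{R}^{q_y \times n_y}$, the column space and the left null space are orthogonal complements in $\mathbb{R}^{q_y}$; that is, $\range(H_y) \oplus \kernel(H_y') = \mathbb{R}^{q_y}$ holds unconditionally. Comparing this identity with the left-hand side of \eqref{eq_nec}, the target reduces to the single claim $\range(H_y'^\dagger Y') = \range(H_y)$.

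To prove this claim I would use two ingredients. The first is the standard Moore--Penrose identity $\range(A^\dagger)=\range(A')$; taking $A=H_y'$ gives $\range(H_y'^\dagger)=\range(H_y)$, which immediately yields the inclusion $\range(H_y'^\dagger Y') \subseteq \range(H_y)$. The second is that the left-inverse hypothesis means $Y$ has full column rank $n_y$, equivalently $Y'$ has full row rank, so $Y'$ maps $\mathbb{R}^n$ onto all of $\mathbb{R}^{n_y}$. Consequently $\range(H_y'^\dagger Y') = H_y'^\dagger\big(\range(Y')\big) = H_y'^\dagger(\mathbb{R}^{n_y}) = \range(H_y'^\dagger) = \range(H_y)$, which closes the argument and, via \eqref{eq_nec} and the preceding theorem, establishes necessity.

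I expect the only delicate point to be the bookkeeping of the four fundamental subspaces and their ambient dimensions ($\range(H_y)$ and $\kernel(H_y')$ both living in $\mathbb{R}^{q_y}$, while $\range(Y')$ lives in $\mathbb{R}^{n_y}$), together with the correct invocation of the pseudoinverse range identity; the rest is routine. As an alternative that bypasses \eqref{eq_nec} entirely, one could note that a left inverse $Y^\dagger$ satisfies $Y^\dagger Y=I$, apply $Y^\dagger$ to both sides of $\XX\subseteq\YY$ to obtain the equivalent containment $-\beta+\Gamma\mathbb{P}_x \subseteq \mathbb{P}_y$ of \eqref{eq_sides} with $\Gamma=Y^\dagger X$ and $\beta=Y^\dagger(\bar{y}-\bar{x})$, and then read off \eqref{eq_theory_main} from Lemma \ref{lemma_HH}; this gives a self-contained route to necessity should the subspace argument be deemed less transparent.
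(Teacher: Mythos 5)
Your proof is correct and follows essentially the same route as the paper's: both reduce necessity to verifying \eqref{eq_nec} and use the full row rank of $Y'$ to obtain $\range(H_y'^\dagger Y')=\range(H_y'^\dagger)$, after which the direct-sum decomposition of $\mathbb{R}^{q_y}$ closes the argument. Your version is in fact slightly more careful than the paper's one-line appeal to ``rank-nullity,'' since you justify the decomposition explicitly via $\range(H_y'^\dagger)=\range(H_y)$ and its orthogonality to $\kernel(H_y')$, and your alternative sketch via left-multiplication by $Y^\dagger$ matches the remark the paper itself makes immediately after the corollary.
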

\begin{proof}
If $Y$ has linearly independent columns, then $\ker(Y)=\{0\}$, or $Y'$ has linearly independent rows. Therefore, $\range(H_y'^\dagger Y')=\range( H_y'^\dagger)$. Then by rank-nullity theorem of linear algebra, we have $\range( H_y'^\dagger) \oplus ker( H_y')=\mathbb{R}^{q_y}$. 
\end{proof}

\begin{corollary}
\label{cor_AH}
The AH-polytope in H-polytope containment problem can be decided in polynomial time.  
\end{corollary}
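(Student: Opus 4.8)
The plan is to recognize that an H-polytope is simply a degenerate AH-polytope, so that the problem falls squarely under the necessary-and-sufficient regime already established. Concretely, given the circumbody $\mathbb{Y}=\{y\in\mathbb{R}^n\mid H_y y\le h_y\}$, I would write it in the AH-form of Theorem \ref{thm_main} by taking $\bar{y}=0$, $Y=I$ (the $n\times n$ identity), and $\mathbb{P}_y=\mathbb{Y}$, so that $\mathbb{Y}=0+I\,\mathbb{P}_y$. Since $I$ trivially has a left-inverse, Corollary \ref{cor_inverse} applies verbatim: the conditions in \eqref{eq_theory_main} are then \emph{necessary and sufficient} for $\mathbb{X}\subseteq\mathbb{Y}$, not merely sufficient.

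Next I would observe that with $Y=I$ the parametrization \eqref{eq_param} forces $\Gamma=X$ and $\beta=\bar{y}-\bar{x}$, eliminating those variables entirely. The remaining conditions collapse to the single feasibility question
\begin{equation}
\exists\,\Lambda\in\mathbb{R}_+^{q_y\times q_x}\ \st\ \Lambda H_x = H_y X,\quad \Lambda h_x \le h_y + H_y(\bar{y}-\bar{x}).
\end{equation}
This is a linear program in the $q_y q_x$ entries of $\Lambda$, with $\mathcal{O}(q_y q_x)$ variables and $\mathcal{O}(n_x q_y + q_y)$ constraints, i.e.\ of size polynomial in the description lengths of $\mathbb{X}$ and $\mathbb{Y}$. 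Linear feasibility problems are solvable in polynomial time (for instance by interior-point methods), which would complete the argument that the AH-in-H containment problem is decidable in polynomial time.

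The one subtlety I expect to handle carefully is the full-dimensionality hypothesis on $\mathbb{P}_x$ inherited from Theorem \ref{thm_main}, which underlies the necessity direction. If $\mathbb{P}_x$ is not full-dimensional, I would restrict to its affine hull and re-represent $\mathbb{X}$ as an equivalent AH-polytope over a full-dimensional H-polytope in the lower-dimensional space; this leaves $\mathbb{X}$ unchanged while legitimizing the parametrization step, and it is a polynomial preprocessing operation. This is really the only nontrivial point, since the rest of the claim is an immediate specialization of Corollary \ref{cor_inverse} together with the polynomial solvability of linear programs.
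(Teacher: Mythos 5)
Your proposal is correct and follows essentially the same route as the paper: specialize Corollary~\ref{cor_inverse} with $Y=I$, so that \eqref{eq_theory_main} collapses (via $\Gamma=X$, $\beta=\bar{y}-\bar{x}$) to a lossless linear feasibility problem of size polynomial in $q_x, q_y, n, n_x$, decidable in polynomial time. The only refinement worth noting is that your full-dimensionality preprocessing is actually superfluous here: that hypothesis in Theorem~\ref{thm_main} is used only to guarantee that the parametrization \eqref{eq_param} is feasible, and with $Y=I$ one has $\range(Y)=\mathbb{R}^n$, so $\Gamma=X$ and $\beta=\bar{y}-\bar{x}$ exist trivially for any $\mathbb{P}_x$.
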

\begin{proof}
This is a special case of Corollary \ref{cor_inverse} with $Y=I$, which reduces to the linear program in \eqref{eq_theory_main}, which is lossless. 
\end{proof}
Corollary \ref{cor_inverse} is not surprising as it was already mentioned that if $Y$ has a left inverse, one can replace $Y\mathbb{P}_y$ by H-polytope $\mathbb{Q}=\{z \in \mathbb{R}^n | H_y Y^{\dagger} z \le h_y\}$. 
Corollary \ref{cor_AH} is a known result in the literature. A version was derived in \cite{kellner2015containment}. It can also be proved using other techniques in basic convex analysis \cite{rockafellar2015convex}. The authors in \cite{rakovic2007optimized} also derived linear encodings for a specific version of AH-polytope in H-polytope containment. 

\begin{remark}
{It is worth to note that while $\Lambda, \Gamma, \beta$ are the decision variables in \eqref{eq_theory_main}, $X$ and $\bar{x}$ do appear linearly with respect to these variables, while $H_x,  H_y$ and $Y$ appear in bilinear terms with  $\Lambda, \Gamma, \beta$. This observation has practical virtues. It implies that one can consider $\bar{x}, X$ as decision variables and still cast the containment problem as linear constraints. This is particularly useful when $\mathbb{X} \subseteq \mathbb{Y}$ is one of the constraints of a (possibly mixed-integer) convex program. For instance, we exploit this fact in Section. \ref{sec_control} to synthesize trajectories of AH-polytopes (parameterized with multiple instances $\bar{x},{X}$) that end within a given polytope $\mathbb{Y}$ subject to mixed-logical dynamical \cite{Bemporad1999} constraints.   
}     
\end{remark}


\section{Special Cases: Zonotopes, Minkowski Sums, Convex Hulls, and Disjunctive Containment}
\label{sec_special}
In this section, we elaborate on the usefulness of Theorem \ref{thm_main} for a number of practically relevant special cases. For the ease of readability, we adopt the notation convention used throughout Section \ref{sec_main}, unless stated otherwise, in the rest of the paper.

\subsection{Zonotopes}
\label{sec_zono}
\begin{theorem}[Zonotope in Zonotope Containment]
\label{thm_zono}
We have$\langle \bar{x},X\rangle \subseteq \langle \bar{y},Y \rangle$, $X \in \mathbb{R}^{n \times n_x}, Y \in \mathbb{R}^{n \times n_y}$, if there exists $\Gamma \in \mathbb{R}^{n_y \times n_x}, \beta \in \mathbb{R}^{n_y}$ such that:
\begin{equation}
\label{eq_zono}
X = Y \Gamma, \bar{y}-\bar{x} = Y \beta, \left\| (\Gamma,\beta) \right\|_\infty \le 1. 
\end{equation}
\end{theorem}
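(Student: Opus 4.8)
The plan is to read Theorem~\ref{thm_zono} as the specialization of Theorem~\ref{thm_main} to $\mathbb{P}_x=\mathbb{B}_{n_x}$ and $\mathbb{P}_y=\mathbb{B}_{n_y}$, and then to show that the only genuinely new condition, the bound on $\left\|(\Gamma,\beta)\right\|_\infty$, is exactly what makes the matrix inequalities of \eqref{eq_theory_main} feasible. Since the unit box is full-dimensional, Theorem~\ref{thm_main} applies directly; moreover its affine conditions $X=Y\Gamma$ and $\bar{y}-\bar{x}=Y\beta$ are already identical to the first two requirements of \eqref{eq_zono}. Hence the whole task reduces to exhibiting a nonnegative $\Lambda$ with $\Lambda H_x = H_y\Gamma$ and $\Lambda h_x \le h_y + H_y\beta$, given $\left\|(\Gamma,\beta)\right\|_\infty \le 1$.

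First I would substitute the H-representations of the boxes, $H_x=[I,-I]$, $h_x=\underbar{1}$, $H_y=[I,-I]$, $h_y=\underbar{1}$ (with $[\cdot,\cdot]$ denoting vertical stacking as in the paper). Then $H_y\Gamma$ is the vertical stack of $\Gamma$ and $-\Gamma$, while $h_y+H_y\beta$ is the vertical stack of $\underbar{1}+\beta$ and $\underbar{1}-\beta$. So the constraints on $\Lambda$ become a pure matching problem between a $\pm$-block matrix and a row-sum bound.

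The key construction is to assemble $\Lambda$ from the elementwise positive and negative parts of $\Gamma$. Splitting $\Lambda$ into two $n_x$-column blocks $\Lambda=(\Lambda^{(1)},\Lambda^{(2)})$, the equation $\Lambda H_x = \Lambda^{(1)}-\Lambda^{(2)}$ is satisfied by taking $\Lambda^{(1)}$ and $\Lambda^{(2)}$ to be the positive and negative parts of the vertical stack of $\Gamma$ and $-\Gamma$; these are nonnegative and sum to the vertical stack of $|\Gamma|$ with itself. Therefore $\Lambda h_x = \Lambda\underbar{1} = (\Lambda^{(1)}+\Lambda^{(2)})\underbar{1}$ is the vertical stack of the vector of row $\ell_1$-norms $\left\|\Gamma_j\right\|_1$ with itself. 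The inequality $\Lambda h_x \le h_y + H_y\beta$ then reads $\left\|\Gamma_j\right\|_1 \le 1+\beta_j$ and $\left\|\Gamma_j\right\|_1 \le 1-\beta_j$ for every row $j$, i.e. $\left\|\Gamma_j\right\|_1 + |\beta_j| \le 1$. As the left-hand side is precisely the absolute row sum of the horizontally stacked matrix $(\Gamma,\beta)$, taking the maximum over $j$ yields exactly $\left\|(\Gamma,\beta)\right\|_\infty \le 1$, so the constructed $\Lambda$ is feasible and Theorem~\ref{thm_main} delivers $\langle\bar{x},X\rangle \subseteq \langle\bar{y},Y\rangle$.

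I expect the only delicate point to be the bookkeeping of the $\pm$-block structure rather than any real analytic obstacle: the crux is recognizing that the freedom to split $\Gamma$ into nonnegative parts is exactly what produces the maximum-absolute-row-sum norm. As an independent check I would also derive the same bound from the geometric interpretation \eqref{eq_sides}, which for boxes reads $-\beta+\Gamma\mathbb{B}_{n_x}\subseteq\mathbb{B}_{n_y}$; evaluating the support function of $\langle-\beta,\Gamma\rangle$ along the coordinate directions $\pm e_j$ gives $\mp\beta_j + \left\|\Gamma_j\right\|_1 \le 1$, which is again $\left\|(\Gamma,\beta)\right\|_\infty \le 1$.
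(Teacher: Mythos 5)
Your proposal is correct, and it is a close cousin of the paper's proof rather than a verbatim match: the interesting difference is the \emph{direction} in which the equivalence between $\Lambda$-feasibility and the norm bound is established. The paper writes $\Lambda=[(\Lambda_1,\Lambda_2),(\Lambda_3,\Lambda_4)]$ in four $n_y\times n_x$ blocks, \emph{assumes} a nonnegative $\Lambda$ satisfying the specialized conditions of Theorem~\ref{thm_main}, and derives $|\Gamma|\le\Lambda_1+\Lambda_2$, $|\Gamma|\le\Lambda_3+\Lambda_4$, hence $|\Gamma|\underbar{1}+|\beta|\le\underbar{1}$, i.e.\ $\left\|(\Gamma,\beta)\right\|_\infty\le 1$; the converse construction is asserted via the claimed equivalence but not displayed. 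You instead prove exactly that converse: splitting $\Lambda$ into two $2n_y\times n_x$ column blocks and taking the elementwise positive and negative parts of the stack $[\Gamma,-\Gamma]$, you build a feasible $\Lambda$ from the norm bound, which is precisely the implication the sufficiency statement of Theorem~\ref{thm_zono} requires (norm bound $\Rightarrow$ $\Lambda$ exists $\Rightarrow$ containment via Theorem~\ref{thm_main}). So your argument fills in the constructive half that the paper leaves implicit, while the paper's displayed direction is what justifies that the norm condition is not merely sufficient but equivalent to the $\Lambda$-conditions (no extra conservativeness is introduced at this step beyond Theorem~\ref{thm_main} itself). Your block accounting checks out ($\Lambda H_x=\Lambda^{(1)}-\Lambda^{(2)}$, row sums giving $\left\|\Gamma_j\right\|_1\le 1\pm\beta_j$), the full-dimensionality of the unit box is correctly noted so Theorem~\ref{thm_main} applies, and your support-function cross-check of $-\beta+\Gamma\mathbb{B}_{n_x}\subseteq\mathbb{B}_{n_y}$ via the directions $\pm e_j$ independently recovers $\left\|\Gamma_j\right\|_1+|\beta_j|\le 1$, which is a nice sanity check the paper does not include. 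If you want your write-up to match the theorem's framing in the paper exactly, you could add one sentence running the paper's reverse implication, so that the two together give the stated equivalence between the $\Lambda$-conditions and \eqref{eq_zono}.
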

\begin{proof}
Using \eqref{eq_theory_main}, we have (i) ${\Lambda [I,-I]}= [I,-I] \Gamma, \Lambda \ge 0, \Lambda \underbar{1} \le  \underbar{1} + [I,-I] \beta$, and (ii) $X=Y \Gamma, \bar{y}-\bar{x} = Y \beta$. We need to show that the existence of $\Lambda \in \mathbb{R}_+^{2n_y \times 2n_x}$ such that (i) holds is equivalent to $\left\| (\Gamma,\beta) \right\|_\infty \le 1$.  

Let $\Lambda=[(\Lambda_1,\Lambda_2),(\Lambda_3,\Lambda_4)], \Lambda_i \in \mathbb{R}_+^{n_y \times n_x}, i=1,2,3,4$. Then we have: 
\begin{subequations}
\begin{equation}
\label{eq_lambda_zono_1}
\Lambda_1-\Lambda_2=\Lambda_4-\Lambda_3=\Gamma,
\end{equation}
\begin{equation}
\label{eq_lambda_zono_2}
(\Lambda_1+\Lambda_2)\underbar{1}-\beta \le 1, (\Lambda_3+\Lambda_4)\underbar{1}+\beta \le 1.
\end{equation}
\end{subequations}
Since all entries in $\Lambda$ are non-negative, from \eqref{eq_lambda_zono_1} we have $|\Gamma|\le \Lambda_1+\Lambda_2$ and $|\Gamma|\le \Lambda_3+\Lambda_4$, which by replacing in \eqref{eq_lambda_zono_2} we obtain $|\Gamma|\underbar{1}-\beta \le \underbar{1}$ and $|\Gamma|\underbar{1}+\beta \le \underbar{1}$. Thus, $|\Gamma| \underbar{1}+|\beta| \le \underbar{1} \Leftrightarrow \left\| (\Gamma,\beta) \right\|_\infty \le 1$, and the proof is complete.  

\end{proof}

\begin{example}
\label{example_zonotopes}
Consider two zonotopes in $\mathbb{R}^2$:
\begin{equation*}
\bar{x}=\left(
\begin{array}{c}
0 \\ 1
\end{array}
\right),
X=\left(
\begin{array}{ccccc}
1& 0 & 0 & 1& 1 \\
0& -1 & 0 & -1& -3 \\
\end{array}
\right),
\end{equation*}
\begin{equation*}
\bar{y}=\left(
\begin{array}{c}
1 \\ 0
\end{array}
\right),
Y=\left(
\begin{array}{cccccc}
1& 0 & 1 & 1& 1 & 2\\
0& 1 & 1 & -1& 3 & -2 \\
\end{array}
\right).
\end{equation*}
The containment of $\mathbb{Z}_x \subseteq \mathbb{Z}_y$ is verified by Theorem \eqref{thm_zono} and is also illustrated in Fig. \ref{fig_zonotopes} [Left]. 
If we drop the last column from $Y$, containment no longer holds, which follows from the infeasibility of \eqref{eq_zono} and is also shown in Fig. \ref{fig_zonotopes} [Right].
\begin{figure}
\centering
\includegraphics[width=0.22\textwidth]{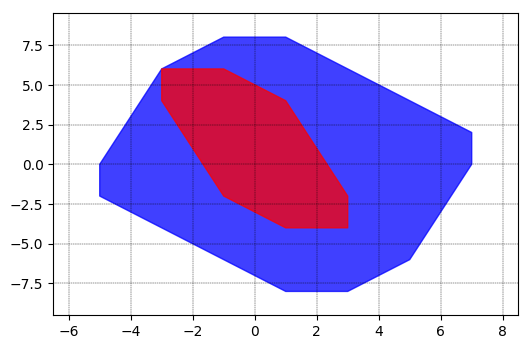}
\includegraphics[width=0.22\textwidth]{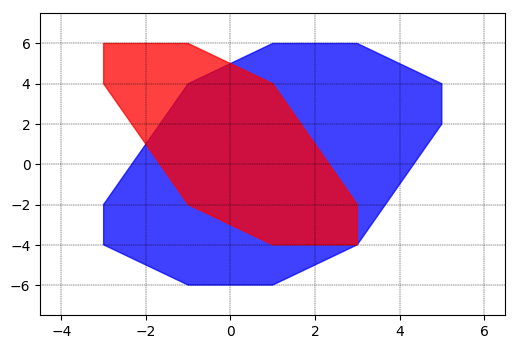}
\label{fig_zonotopes}
\caption{Example \ref{example_zonotopes}: Zonotope Containment Problem: [left] $\mathbb{Z}_x \subseteq \mathbb{Z}_y$, [Right] $\mathbb{Z}_x \not \subseteq \mathbb{Z}_y^*$, where the last column of $Y$ is dropped.}
\end{figure}
\end{example}

{\bf On the Conservativeness of Theorem \ref{thm_zono}:}
Perhaps surprisingly, we found that Theorem \ref{thm_zono} is often lossless. We found manually searching for a counterexample, where containment holds but Theorem \ref{thm_zono} fails to verify it, to be non-trivial. It remains an open problem whether a lossless condition for zonotope containment without relying on vertex/hyperplane enumeration exists. 

In order to characterize conservativeness, given zonotopes $\mathbb{Z}_x$ and $\mathbb{Z}_y$, we enumerate the vertices of $\mathbb{Z}_x$ - there are at most $2^{n_x}$ of them, where {$n_x$} is the number of columns in $X$. Then we solve two linear programs: I) the {maximum} $\lambda_{\text{lossless}}$ such that all the vertices of {$\lambda_{\text{lossless}} \mathbb{Z}_x$} are inside $\mathbb{Z}_y$, and II) the {maximum} $\lambda_{\text{Theorem}~ \ref{thm_zono}}$ such that \eqref{eq_zono} holds for {$\lambda_{\text{Theorem}~ \ref{thm_zono}} \mathbb{Z}_x \subseteq \mathbb{Z}_y$}. We introduce the loss function 
\begin{equation}
\text{loss}=(\lambda_{\text{lossless}}- \lambda_{\text{Theorem}~ \ref{thm_zono}})/\lambda_{\text{lossless}}.
\end{equation}
If $\text{loss}=0$ for a certain zonotope containment problem, then Theorem \ref{thm_zono} does not introduce any conservativeness.

We randomly generated over 10000 zonotopes in $\mathbb{R}^n, n=3,\cdots,10$, with random number of generator columns (uniformly sampled between $n$ and $12$) for the inbody and the circumbody, and random values of generator matrix entries uniformly and independently chosen between $-1$ and $1$, and performed a statistical analysis. The loss was smaller than $0.01$  for nearly $98\%$ of the zonotopes. The histogram of loss values for all zonotopes is shown in Fig. \ref{fig_loss} [Left], and for different dimensions in Fig. \ref{fig_loss} [Right]. The histogram is so skewed toward large numbers of zonotopes with small loss values that we used logarithmic scale for meaningful illustration. We observed a trend, albeit not very strong, of loss values getting larger with zonotope dimension $n$. We never observed a loss greater than $0.1$. We intuitively expect that at very high dimensions $(n > 10)$, the loss may be significant, but verifying this fact requires zonotope vertex/hyperplane enumeration, which is not possible for very large values of $n$. 

\begin{figure}
\centering
\includegraphics[width=0.22\textwidth]{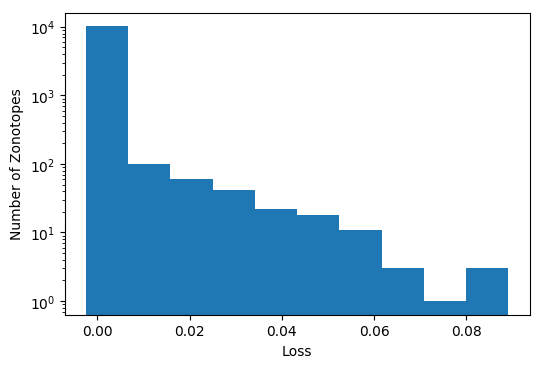}
\includegraphics[width=0.22\textwidth]{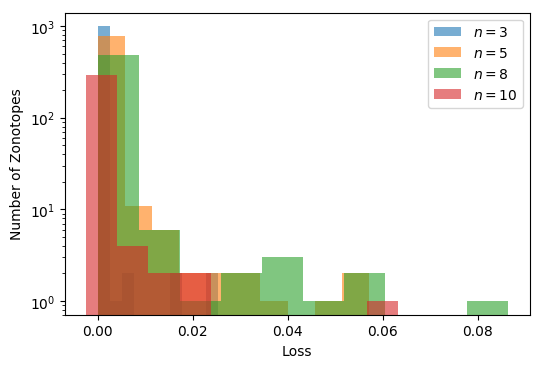}
\caption{Histograms of Loss values for Theorem \ref{thm_zono}}
\label{fig_loss}
\end{figure}

With a randomized search over rational generators, we found the following counterexample in $n=3$. We never found a counterexample with $n=2$. 

\begin{example}
Consider two zonotopes in $\mathbb{R}^3$ with centroids at origin:
\begin{equation*}
X=\left(
\begin{array}{ccc}
5& -1 & 2 \\
-4 & -2 & 2 \\
4 & -1 & -4 \\
\end{array}
\right),
\end{equation*}
\begin{equation*}
Y=\left(
\begin{array}{ccccc}
4& 0 & -4 & 1& 0 \\
-3& 0 & 0 & 4& 1 \\
1& -4 & -5 & -1& -3 \\
\end{array}
\right).
\end{equation*}
One can verify $\mathbb{Z}_x \subseteq \mathbb{Z}_y$ through checking all the vertices of $\mathbb{Z}_x$. However, Theorem \ref{thm_zono} fails to establish containment as \eqref{eq_zono} is infeasible. However, feasibility is gained by scaling $\mathbb{Z}_x$ by 0.9915. The loss for this case is $0.0085$.
\end{example}

\begin{remark}
The authors in \cite{han2016enlarging} provided a sufficient condition based on linear matrix inequalities (LMI) for the zonotope containment problem in which the zonotopes were in their infinity-norm representation as $\mathbb{Z}=\{ x \in \mathbb{R}^n | \|H(x-\bar{x})\|_\infty\}$. However, in this case, the H-polytope form of zonotopes is already available as $\mathbb{Z}= \{ x \in \mathbb{R}^n | [I -I] H(x-\bar{x}) \le \underbar{1} \}.$ Therefore, Lemma \ref{lemma_HH} provides necessary and sufficient conditions for this restricted classes of zonotopes and a sufficient condition based on an LMI characterization is not required.
\end{remark}


\subsection{Minkowski Sums}
Minkowski sums of polytopes are widely used in computational geometry, collision detection, and robust control. For example, the set of reachable states of an uncertain dynamical system can be characterized as the Minkowski sum of set-valued uncertain effects accumulated over time. The Minkowski sum of multiple H-polytopes can be rewritten as an AH-polytope. For example, let $\mathbb{P}_1=\{x \in \mathbb{R}^n | H_1 x \le h_1\} ,\mathbb{P}_2=\{x \in \mathbb{R}^n | H_2 x \le h_2\}$, be two H-polytopes. Then $\mathbb{P}_1 \oplus \mathbb{P}_2$ is equivalent to:
\begin{equation}
\mathbb{P}_1 \oplus \mathbb{P}_2= (I,I) \mathbb{P}_{sum},
\end{equation} 
where $\mathbb{P}_{sum}=\{ z \in \mathbb{R}^{2n} | H_{sum} z \le h_{sum} \}$, and
$$
H_{sum}=
\left(
\begin{array}{cc}
H_1 & 0 \\
0 & H_2 \\
\end{array}
\right), h_{sum}= 
\left(
\begin{array}{c}
h_1 \\
h_2 \\
\end{array}
\right).
$$
If the Minkowski sum is the inbody, we know from Corollary \ref{cor_AH} that there exists a lossless encoding. The following result was also reported in \cite{rakovic2007optimized}. 

\begin{proposition}[Minkowski sum as the inbody]
\label{prop_minko_in}
We have $\bar{x}+\bigoplus_{i=1}^N X_i \mathbb{P}_{x,i} \subseteq \mathbb{P}_y$, if and only if the $\exists \Lambda_i \in \mathbb{R}_+^{q_{y} \times q_{x,i}}, i=1,\cdots,N$, such that the following conditions hold:
\begin{equation}
\begin{array}{c}
\Lambda_i H_{x,i} = H_y X_i, i=1,\cdots,N, \displaystyle \sum_{i=1}^N \Lambda_i h_{x,i} \le h_y - H_y \bar{x}.
\end{array}
\end{equation}
\end{proposition}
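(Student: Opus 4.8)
The plan is to recognize this proposition as a special instance of Theorem~\ref{thm_main} in which the circumbody is an H-polytope, so that the conditions are not merely sufficient but lossless. Concretely, the circumbody is $\mathbb{Y}=\mathbb{P}_y$, which in the notation of Theorem~\ref{thm_main} corresponds to $\bar{y}=0$ and $Y=I$. Since $Y=I$ trivially has a left-inverse, Corollary~\ref{cor_inverse} (equivalently Corollary~\ref{cor_AH}) guarantees that the encoding~\eqref{eq_theory_main} is necessary and sufficient here. The only work remaining is to rewrite the Minkowski-sum inbody as a single AH-polytope and to expand the block structure of the resulting matrices.

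First I would collapse the inbody into one affine transformation of an H-polytope. Using the block-diagonal construction already introduced for Minkowski sums in Section~\ref{sec_prelim}, set $X=(X_1,\dots,X_N)$, $H_x=\blk(H_{x,1},\dots,H_{x,N})$, and $h_x=(h_{x,1},\dots,h_{x,N})$, so that $\bar{x}+\bigoplus_{i=1}^N X_i\mathbb{P}_{x,i}=\bar{x}+X\mathbb{P}_x$ with $\mathbb{P}_x=\{z\mid H_x z\le h_x\}$. With $Y=I$ and $\bar{y}=0$, the parametrization~\eqref{eq_param} forces $\Gamma=X$ and $\beta=-\bar{x}$, eliminating those variables entirely and leaving only the nonnegative multiplier $\Lambda$.

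Next I would partition $\Lambda$ conformally with the columns of $H_x$ as $\Lambda=(\Lambda_1,\dots,\Lambda_N)$ with $\Lambda_i\in\mathbb{R}_+^{q_y\times q_{x,i}}$, and expand the two remaining conditions of~\eqref{eq_theory_main}. Because $H_x$ is block-diagonal while $X$ stacks the $X_i$ horizontally, the equality $\Lambda H_x=H_y\Gamma$ decouples block by block into $\Lambda_i H_{x,i}=H_y X_i$ for each $i$, and the inequality $\Lambda h_x\le h_y+H_y\beta$ becomes $\sum_{i=1}^N \Lambda_i h_{x,i}\le h_y-H_y\bar{x}$. These are exactly the stated conditions, and since the hypotheses of Corollary~\ref{cor_inverse} hold, both directions follow at once.

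This argument is essentially bookkeeping, so there is no serious obstacle; the only point requiring care is verifying that the horizontal partition of $\Lambda$ against the block-diagonal $H_x$ really does decouple the linear system into the per-summand equalities $\Lambda_i H_{x,i}=H_y X_i$, with no cross terms surviving. An alternative, self-contained route that avoids invoking Theorem~\ref{thm_main} is to argue directly through support functions: containment holds iff $H_{y,j}\bar{x}+\sum_{i=1}^N \max_{p_i\in\mathbb{P}_{x,i}} H_{y,j}X_i p_i \le h_{y,j}$ for every facet $j$ of $\mathbb{P}_y$, using that the support function of a Minkowski sum is the sum of the support functions; dualizing each inner linear program as in the proof of Lemma~\ref{lemma_HH} and collecting the dual variables into the blocks $\Lambda_i$ reproduces the same conditions.
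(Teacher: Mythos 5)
Your proposal is correct and follows exactly the route the paper intends: the text preceding Proposition~\ref{prop_minko_in} rewrites the Minkowski sum as an AH-polytope with horizontally stacked $X=(X_1,\dots,X_N)$ and block-diagonal $H_{sum}=\blk(H_{x,1},\dots,H_{x,N})$, then invokes Corollary~\ref{cor_AH} (Theorem~\ref{thm_main} with $Y=I$, $\bar{y}=0$, which forces $\Gamma=X$, $\beta=-\bar{x}$ and is lossless) so that partitioning $\Lambda=(\Lambda_1,\dots,\Lambda_N)$ against the block-diagonal $H_x$ yields precisely the stated per-summand equalities and the summed inequality. Your bookkeeping of the block decoupling is accurate, and your alternative support-function derivation is the same argument that underlies Lemma~\ref{lemma_HH}, so nothing is missing.
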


The case of Minkowski sum being the circumbody is more complicated. We want to encode $ \mathbb{P}_x \subseteq \mathbb{P}_1 \oplus \mathbb{P}_2$. Using Theorem \ref{thm_main}, we arrive at the following conditions:
$$
\begin{array}{c}
I=(I,I)\Gamma, 0=(I,I)\beta, \\
\Lambda H_x = H_{sum} \Gamma, \Lambda h_x \le h_{sum} + H_{sum} \beta.
\end{array} 
$$
By writing $\Gamma=[\Gamma_1,\Gamma_2], \beta=[\beta_1,\beta_2], \Lambda=[\Lambda_1,\Lambda_2]$, we have the following relations:
\begin{equation*}
\begin{array}{c}
 \Lambda_i H_x=H_i \Gamma_i, \Lambda_i h_x \le h_i + H_i \beta_i, i=1,2.\\ 
\end{array}
\end{equation*}
This solution has a geometrical interpretation. It implies there exists $\mathbb{X}_i \subseteq \mathbb{P}_i, i=1,2$, where $\mathbb{X}_i = - \beta_i + \Gamma_i \mathbb{P}_x$. In other words, two affine transformations of $\mathbb{P}_x$ exist, one in $\mathbb{P}_1$ and the other in $\mathbb{P}_2$, such that the sum of the transformation matrices equals $I$ and their offsets add up to zero. This idea can be generalized to $N$ polytopes. 

\begin{proposition}[Minkowski Sum as the Circumbody]
\label{result_minkowski_right}
Let $N+1$ polytopes $\mathbb{P}_x, \mathbb{P}_{y,i} \subset \mathbb{R}^{n},i=1,\cdots, N,$. Then 
$$\bar{x}+ X \mathbb{P}_x \subseteq \bigoplus_{i=1}^N (\bar{y}_i + Y_i \mathbb{P}_{y,i}) $$
if there exists $\Lambda_i, \beta_i, \Gamma_i, i=1,\cdots,N$, with appropriate sizes from \eqref{eq_theory_main}, such that 
\begin{equation}
\begin{array}{c}
\Lambda_i H_x= H_{y,i} \Gamma_i, \Lambda_i h_x \le h_{y,i} + H_{y,i} \beta_i, i=1,\cdots,N, \\
\displaystyle \sum_{i=1}^N \bar{y_i}-Y_i \beta_i = \bar{x}, 
\displaystyle \sum_{i=1}^N Y_i \Gamma_i = X. 
 \end{array}
\end{equation}
\end{proposition}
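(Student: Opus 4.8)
The plan is to reduce this statement directly to Theorem \ref{thm_main} by first collapsing the Minkowski-sum circumbody into a single AH-polytope, and then decomposing the decision variables that Theorem \ref{thm_main} produces back along the block structure. The only content beyond invoking the main theorem is careful bookkeeping of dimensions.

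First I would rewrite the circumbody $\bigoplus_{i=1}^N (\bar{y}_i + Y_i \mathbb{P}_{y,i})$ as one AH-polytope $\bar{y} + Y \mathbb{P}_y$, exactly as in the Minkowski-sum identity of Section \ref{sec_prelim}: take $\bar{y} = \sum_{i=1}^N \bar{y}_i$, set $Y = (Y_1, \ldots, Y_N)$ (horizontal concatenation), and let $\mathbb{P}_y = \{ z \mid H_y z \le h_y\}$ be the Cartesian product $\prod_i \mathbb{P}_{y,i}$, whose hyperplane data is $H_y = \blk(H_{y,1}, \ldots, H_{y,N})$ and $h_y = [h_{y,1}, \ldots, h_{y,N}]$. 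By construction $Y \mathbb{P}_y = \bigoplus_i Y_i \mathbb{P}_{y,i}$, so $\bar{y} + Y \mathbb{P}_y$ is precisely the Minkowski sum.

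Next I would apply Theorem \ref{thm_main} to $\mathbb{X} = \bar{x} + X \mathbb{P}_x$ and this $\mathbb{Y} = \bar{y} + Y \mathbb{P}_y$, which guarantees $\mathbb{X} \subseteq \mathbb{Y}$ whenever there exist $\Gamma, \beta$ and $\Lambda \ge 0$ with $X = Y \Gamma$, $\bar{y} - \bar{x} = Y \beta$, $\Lambda H_x = H_y \Gamma$, and $\Lambda h_x \le h_y + H_y \beta$. The key step is to partition these variables compatibly with the block structure: write $\Gamma = [\Gamma_1, \ldots, \Gamma_N]$, $\beta = [\beta_1, \ldots, \beta_N]$, and $\Lambda = [\Lambda_1, \ldots, \Lambda_N]$, with blocks sized to the $i$-th factor ($\Gamma_i \in \mathbb{R}^{n_{y,i} \times n_x}$, $\beta_i \in \mathbb{R}^{n_{y,i}}$, $\Lambda_i \in \mathbb{R}_+^{q_{y,i} \times q_x}$). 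Because $Y$ is a horizontal concatenation, $X = Y\Gamma$ becomes $\sum_i Y_i \Gamma_i = X$ and $\bar{y} - \bar{x} = Y\beta$ becomes $\sum_i (\bar{y}_i - Y_i \beta_i) = \bar{x}$; because $H_y$ is block-diagonal and $h_y$ is stacked, the remaining two conditions decouple row-block by row-block into $\Lambda_i H_x = H_{y,i} \Gamma_i$ and $\Lambda_i h_x \le h_{y,i} + H_{y,i} \beta_i$. These are exactly the hypotheses of the proposition, so the per-factor variables assemble into a feasible triple for Theorem \ref{thm_main}, and containment follows.

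I do not anticipate a substantive obstacle; the proof is essentially a dimension-tracking exercise, and the one point to verify with care is that the block partition respects the stated sizes so that the concatenated system and the per-factor system are literally identical after decomposition, and that $\Lambda \ge 0$ is equivalent to $\Lambda_i \ge 0$ for every $i$. As an informal sanity check, the geometric reading recorded just before the statement applies factor-wise: each $-\beta_i + \Gamma_i \mathbb{P}_x$ is an affine copy of $\mathbb{P}_x$ landing in $\mathbb{P}_{y,i}$ by \eqref{eq_sides}, and the constraints $\sum_i Y_i \Gamma_i = X$ and $\sum_i(\bar{y}_i - Y_i\beta_i)=\bar{x}$ say exactly that these copies reassemble into $\mathbb{X}$ across the sum.
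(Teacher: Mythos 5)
Your proposal is correct and matches the paper's own proof: the paper likewise rewrites the circumbody as $(Y_1,\cdots,Y_N)\mathbb{P}_{sum}$ with $H_{sum}=\blk(H_{y,1},\cdots,H_{y,N})$, $h_{sum}=[h_{y,1},\cdots,h_{y,N}]$, and then invokes Theorem \ref{thm_main}, with the block decomposition of $\Gamma$, $\beta$, $\Lambda$ carried out exactly as you describe (the paper displays it explicitly for $N=2$ just before the proposition). Your additional checks---that the block-diagonal structure decouples the $\Lambda$ conditions row-block by row-block and that $\Lambda \ge 0$ holds iff each $\Lambda_i \ge 0$---are precisely the bookkeeping the paper leaves implicit.
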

\begin{proof}
The proof follows from writing $\bigoplus_{i=1}^N Y_i \mathbb{P}_{y,i}$ as $(Y_1,\cdots,Y_N) P_{sum}$, where $H_{sum}=\blk(H_{y,1},\cdots,H_{y,N})$ and $h_{sum}=[h_{y,1},\cdots,h_{y,N}]$. The rest of the proof follows from Theorem \ref{thm_main}.    
\end{proof}

The condition in Proposition \ref{result_minkowski_right} is conservative. Otherwise, we would have been able to verify $\mathbb{P}_x=\mathbb{P}_1 \oplus \mathbb{P}_2$ for H-polytopes in polynomial time, a problem that is proven to be NP-complete \cite{tiwary2008hardness}. However, the result may still be useful depending on the application. For example, the trivial proposition $\mathbb{P}_i \subseteq \mathbb{P}_1 \oplus \mathbb{P}_2, i=1,2$ always follows from Proposition \ref{result_minkowski_right} when $\mathbb{P}_1$ and $\mathbb{P}_2$ contain the origin. The next example demonstrates the limitations of Proposition \ref{result_minkowski_right}.

\begin{example}
\label{example_minko}
Consider two H-polytopes in $\mathbb{R}^2$, $\mathbb{P}_1=\{[x_1,x_2] | x_1+x_2 \le 1, -x_1+x_2 \le 1, x_2 \ge 0]\}$, a triangle, and $\mathbb{P}_2=\{[x_1,x_2] | |x_1| \le 0.1, -1\le x_2 \le 0]\}$, a box stretching from $[0,-1]$ to $[0,0]$ with width 0.1 - see Fig. \ref{fig_minkowski} [Top Left]. Let $\mathbb{P}_{sum}=\mathbb{P}_1 \oplus \mathbb{P}_2$. For this 2D example, it is easy to enumerate the hyperplanes of $\mathbb{P}_{sum}$. Proposition \ref{result_minkowski_right} fails to establish $\mathbb{P}_{sum} \subseteq \mathbb{P}_1 \oplus \mathbb{P}_2$, but it verifies $\lambda \mathbb{P}_{sum} \subseteq \mathbb{P}_1 \oplus \mathbb{P}_2$ for $\lambda \le 0.68$, which manifests the loss of Proposition \ref{result_minkowski_right}. The decomposed H-polytopes, as described in Proposition \ref{result_minkowski_right}, are illustrated in Fig. \ref{fig_minkowski}.   

\begin{figure}
\centering
\includegraphics[width=0.22\textwidth]{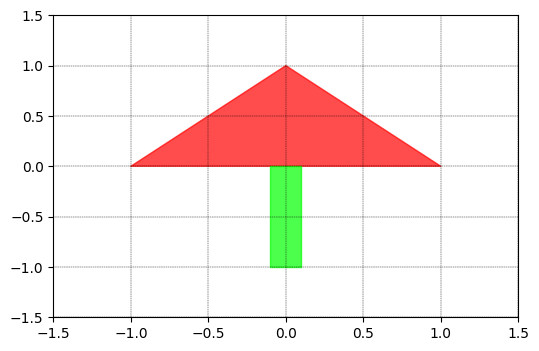}
\includegraphics[width=0.22\textwidth]{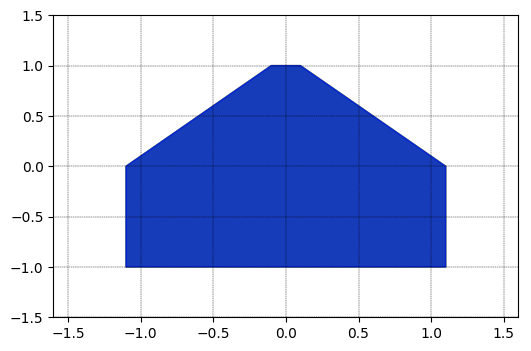}
\includegraphics[width=0.22\textwidth]{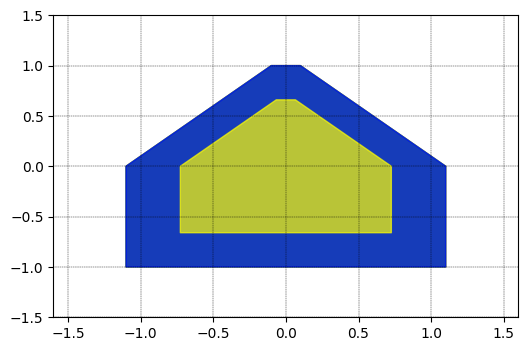}
\includegraphics[width=0.22\textwidth]{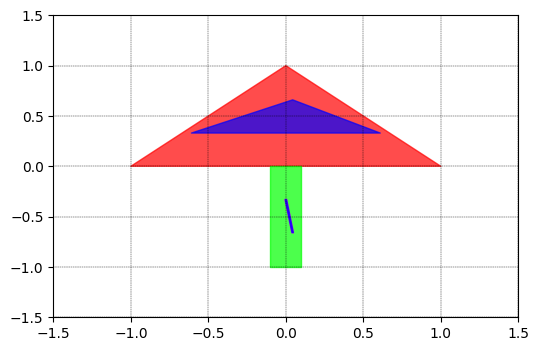}
\label{fig_minkowski}
\caption{Example \ref{example_minko}: [Top Left] Two H-polytopes $\mathbb{P}_1,\mathbb{P}_2$ [Top Right] $\mathbb{P}_1 \oplus \mathbb{P}_2$ [Bottom Left] $0.66(\mathbb{P}_1 \oplus \mathbb{P}_2$) [Bottom Right] Decomposition of Polytopes $\Gamma_1\mathbb{P}_1+\bar{x}_1 \subseteq \mathbb{P}_1$ and $\Gamma_2\mathbb{P}_1+\bar{x}_2 \subseteq \mathbb{P}_2$ (both shown in blue) such that $\Gamma_1+\Gamma_2=I$ and $\bar{x}_1+\bar{x}_2=0$.}
\end{figure}

\end{example}


\subsection{Convex hulls}
Enforcing containment for convex hulls is useful when the desired property is closely related to convexity. 
  When the convex hull of multiple polytopes is the inbody, the containment problem becomes straightforward as it basically implies that each polytope is an inbody itself - we obtain a series of polytope containment problems. However, when the convex hull is the circumbody, there is a significant departure in the complexity. Note that H-polytope in V-polytope containment, a co-NP-complete problem, is a special case of the convex hull being the circumbody, where each point can be represented as a singular H-polytope. 

\begin{proposition}[Convex Hull of H-polytopes as the Circumbody]
\label{prop_convexhull}
Given $N+1$ polytopes $\mathbb{P}_x,\mathbb{P}_{y,i} ,i=1,\cdots, N$, we have 
$$\bar{x} + X \mathbb{P}_x \subseteq \convh(\{\mathbb{P}_{y,i}\}_{i=1,\cdots,N})$$
 if there exists $\Lambda_i \ge 0, \lambda_i \ge 0, \beta_i, \Gamma_i, i=1,\cdots,N$, such that
\begin{equation}
\label{eq_convexhull_H}
\begin{array}{c}
\Lambda_i H_x= H_{y,i} \Gamma_i, \Lambda_i h_x \le \lambda_i h_{y,i} + H_{y,i} \beta_i, i=1,\cdots,N,  \\
\displaystyle \sum_{i=1}^N \lambda_i=1,
\displaystyle \sum_{i=1}^N -\beta_i=\bar{x},
\displaystyle \sum_{i=1}^N \Gamma_i=X.
 \end{array}
\end{equation}
\end{proposition}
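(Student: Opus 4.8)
The plan is to follow the same template used for the Minkowski-sum circumbody (Proposition \ref{result_minkowski_right}): first rewrite the convex hull as a single AH-polytope, then read off a block-structured sufficient condition from Theorem \ref{thm_main}. Concretely, using the homogenization $z_i = \lambda_i y_i$, the set $\convh(\{\mathbb{P}_{y,i}\})$ equals the image under $(I,\cdots,I,0,\cdots,0)$ of the H-polytope in $\mathbb{R}^{N(n+1)}$ whose points $(z_1,\ldots,z_N,\lambda_1,\ldots,\lambda_N)$ satisfy $H_{y,i} z_i \le \lambda_i h_{y,i}$ and $\lambda_i \ge 0$ for each $i$, together with $\sum_i \lambda_i = 1$. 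This representation is the only place the $\lambda_i$'s enter, and it is exactly what distinguishes the convex hull from the plain Minkowski sum.

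The key step is to interpret each block of the stated conditions geometrically via Lemma \ref{lemma_HH}. For a fixed $\lambda_i \ge 0$ one has the identity $\{w \mid H_{y,i} w \le \lambda_i h_{y,i}\} = \lambda_i \mathbb{P}_{y,i}$ (for $\lambda_i=0$ this reduces to the recession cone $\{0\}$, using that $\mathbb{P}_{y,i}$ is bounded). Hence the pair $\Lambda_i H_x = H_{y,i}\Gamma_i$, $\Lambda_i h_x \le \lambda_i h_{y,i} + H_{y,i}\beta_i$ with $\Lambda_i \ge 0$ is, by Lemma \ref{lemma_HH}, precisely the statement that $\mathbb{P}_x \subseteq \{p \mid H_{y,i}(\Gamma_i p - \beta_i) \le \lambda_i h_{y,i}\}$, i.e. $-\beta_i + \Gamma_i \mathbb{P}_x \subseteq \lambda_i \mathbb{P}_{y,i}$. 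This is the convex-hull analogue of the decomposition $\mathbb{X}_i \subseteq \mathbb{P}_i$ appearing in the Minkowski-sum discussion.

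Then I would assemble the containment directly. Take an arbitrary point $\bar{x} + X p$ with $p \in \mathbb{P}_x$. Using $\sum_i \Gamma_i = X$ and $\sum_i(-\beta_i) = \bar{x}$, I write $\bar{x} + X p = \sum_{i=1}^N (-\beta_i + \Gamma_i p)$. By the previous step each summand $w_i := -\beta_i + \Gamma_i p$ lies in $\lambda_i \mathbb{P}_{y,i}$. Since $\lambda_i \ge 0$ and $\sum_i \lambda_i = 1$, the sum $\sum_i w_i$ belongs to $\bigoplus_{i=1}^N \lambda_i \mathbb{P}_{y,i}$, which by definition is contained in $\convh(\{\mathbb{P}_{y,i}\})$. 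As $p$ was arbitrary, $\bar{x} + X\mathbb{P}_x \subseteq \convh(\{\mathbb{P}_{y,i}\})$, establishing sufficiency.

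The main obstacle is bookkeeping with the homogenizing variables rather than any deep difficulty. One must (i) establish the scaling identity $\{w \mid H_{y,i} w \le \lambda_i h_{y,i}\} = \lambda_i \mathbb{P}_{y,i}$ cleanly, including the degenerate $\lambda_i = 0$ case; and (ii) be careful that, when specializing the output of Theorem \ref{thm_main}, the weights $\lambda_i$ are taken to be genuine constants, meaning the rows of $\Gamma$ and $\beta$ associated with the $\lambda$-coordinates are chosen so that $\lambda_i$ does not vary with $p$ while $\sum_i \lambda_i = 1$ encodes the simplex constraint. This restriction is exactly why the condition is only sufficient: allowing the $\lambda_i$ to depend affinely on $p$ would recover the full (still conservative) output of Theorem \ref{thm_main}. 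Finally, the sign conventions on $\beta$ must be tracked, since the AH representation of the hull has $\bar{y}=0$, so $\bar{y}-\bar{x}=Y\beta$ specializes to $\sum_i(-\beta_i)=\bar{x}$ as in the statement.
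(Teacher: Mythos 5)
Your proof is correct, but it proves sufficiency by a more direct route than the paper does. The paper's proof stays entirely inside the machinery of Theorem~\ref{thm_main}: it writes the hull as a single AH-polytope with the homogenized description ($H_{y,i}z_i \le \lambda_i h_{y,i}$, $\lambda_i \ge 0$, $\sum_i \lambda_i = 1$, exactly your lift), partitions $\Gamma$, $\beta$, $\Lambda$ into blocks including extra variables $\gamma_i$, $\eta_i$, $\zeta_1$, $\zeta_2$ for the $\lambda$-coordinates and simplex rows, writes out all the resulting bilinear-free conditions, and then observes that setting $\gamma_i = \eta_i = 0$ and $\zeta_1 = \zeta_2 = 0$ yields precisely \eqref{eq_convexhull_H}. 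You instead set up the same lift but then bypass Theorem~\ref{thm_main} entirely: you use only the easy (sufficiency) direction of Lemma~\ref{lemma_HH} to extract $-\beta_i + \Gamma_i \mathbb{P}_x \subseteq \lambda_i \mathbb{P}_{y,i}$ blockwise, and assemble the containment pointwise via $\bar{x} + Xp = \sum_i(-\beta_i + \Gamma_i p)$ and the definition of the hull as $\bigoplus_i \lambda_i \mathbb{P}_{y,i}$. The paper itself gestures at your argument only after the fact, in the interpretation \eqref{eq_interpret} together with Lemma~\ref{lemma_trivial}, but its actual proof is the block specialization. Each route buys something: the paper's derivation exhibits the proposition as a genuine instance of Theorem~\ref{thm_main} and makes visible exactly where conservativeness beyond Theorem~\ref{thm_main} enters (your observation that zeroing the $\gamma_i$ rows freezes the weights $\lambda_i$ to constants, whereas Theorem~\ref{thm_main} would let them vary affinely with $p$, is exactly the paper's ``possible solution'' step); your direct proof is self-contained, handles the degenerate $\lambda_i = 0$ case explicitly via boundedness of $\mathbb{P}_{y,i}$ (which the paper glosses over), and notably does not inherit the full-dimensionality hypothesis on $\mathbb{P}_x$ that Theorem~\ref{thm_main} carries, since you never invoke the parametrization step \eqref{eq_param}. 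One cosmetic caution: your phrase that the blockwise conditions are ``precisely'' the containment $-\beta_i + \Gamma_i\mathbb{P}_x \subseteq \lambda_i\mathbb{P}_{y,i}$ implicitly uses the necessity direction of Lemma~\ref{lemma_HH}; that equivalence does hold here because the inbody $\mathbb{P}_x$ is a polytope (so LP duality applies even when the right-hand set is unbounded), but for the proposition as stated only the trivial sufficiency direction is needed, and your proof in fact only uses that direction.
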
 
\begin{proof}
The convex hull is an AH-polytope as follows:
 $$\convh(\{\mathbb{P}_{y,i}\}_{i=1,\cdots,N})= (I,\cdots,I,0,\cdots,0) P_{hull},$$ where $H_{hull}=[(\blk(H_{y,1},\cdots,H_{y,N}),-\blk(h_{y,1},\cdots,h_{y,N}))$, $(0,-I),(0,\underbar{1}'),(0,-\underbar{1}')],$ and $h_{hull}=[0,\cdots,0,1,1]$. Let $\Gamma=[\Gamma_1,\cdots,\Gamma_N,\gamma_1,\cdots,\gamma_N],\beta=[\beta_1,\cdots,\beta_N,-\lambda_1,\cdots,-\lambda_N]$, and $\Lambda=[\Lambda_1,\cdots,\Lambda_N, \eta_1,\cdots,\eta_N, \zeta_1,\zeta_2]$. Then the following relations satisfy Theorem \ref{thm_main}:
$$
\begin{array}{c}
X= \displaystyle \sum_{i=1}^N \Gamma_i, -\bar{x}=\displaystyle \sum_{i=1}^N \beta_i, \\
\Lambda_i H_x = H_{y,i} \Gamma_i - h_{y,i} \gamma_i, \eta_i H_x = - \gamma_i,\\
\Lambda_i h_x \le \lambda_i h_{y,i} + H_{y,i} \beta_i, \eta_i h_x \le \lambda_i, i=1,\cdots,N, \\
\zeta_1 H_x = \displaystyle \sum_{i=1}^N \gamma_i, \zeta_2 H_x=-\sum_{i=1}^N \gamma_i, \\
 \zeta_1 h_x \le 1 -\displaystyle \sum_{i=1}^N \lambda_i, \zeta_2 h_x \le -1 + \displaystyle \sum_{i=1}^N \lambda_i.
\end{array}
$$
A possible solution is $\gamma_i,\eta_i=0, i=1,\cdots,N, \zeta_1=\zeta_2=0$. Thus, the relations above yield $\lambda_i \ge 0, i=1,\cdots,N$, and  $\sum_{i=1}^N \lambda_i=1$, and the proof is complete. 
\end{proof}
Note that Proposition \ref{result_minkowski_right} is a special case of Proposition \ref{prop_convexhull} with $\lambda_i=1, i=\cdots,N$, while dropping the constraint $\sum_{i=1}^N \lambda_i=1$. Proposition \ref{prop_convexhull} also has the following geometrical interpretation:
\begin{equation}
\label{eq_interpret}
-\beta_i + \Gamma_i \mathbb{P}_x  \subseteq \lambda_i \mathbb{P}_{y,i}.
\end{equation}
We use \eqref{eq_interpret} to generalize Proposition \ref{prop_convexhull} to the convex hull of AH-polytopes. The reason that we present Proposition \ref{prop_convexhull} independently, and not as a special case of the following result, is explained in Sec. \ref{sec_disjunctive}. First, we state the following simple yet useful lemma. 

\begin{lemma}
\label{lemma_trivial}
Given $\mathbb{S} \subset \mathbb{R}^n$ and matrices $A_1,\cdots,A_N$ with appropriate dimensions, the following relation holds:
\begin{equation}
(\sum_{i=1}^N A_i) \mathbb{S} \subseteq \bigoplus_{i=1}^N A_i \mathbb{S}.
\end{equation}
\end{lemma}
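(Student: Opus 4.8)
The claim is the set inclusion $(\sum_{i=1}^N A_i)\mathbb{S} \subseteq \bigoplus_{i=1}^N A_i \mathbb{S}$, which is an elementary fact about Minkowski sums and shared arguments. The plan is to prove it by a direct element-chase: take an arbitrary point of the left-hand side and exhibit it as a member of the right-hand side.

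First I would take an arbitrary element $v \in (\sum_{i=1}^N A_i)\mathbb{S}$. By the definition of the image of a set under a matrix, there exists a single $s \in \mathbb{S}$ such that $v = (\sum_{i=1}^N A_i)s$. The key algebraic observation is that matrix multiplication distributes over the sum, so $v = \sum_{i=1}^N A_i s$. Now I would interpret each summand $A_i s$ as a point of $A_i \mathbb{S}$; this is legitimate precisely because the \emph{same} $s$ lies in $\mathbb{S}$, so $A_i s \in A_i\mathbb{S}$ for every $i$.

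Next I would invoke the definition of the Minkowski sum stated in the preliminaries, namely $\bigoplus_{i=1}^N \mathbb{S}_i = \{\, \sum_{i=1}^N s_i \mid s_i \in \mathbb{S}_i \,\}$, applied with $\mathbb{S}_i = A_i\mathbb{S}$. Since $v$ is written as $\sum_{i=1}^N (A_i s)$ with each term $A_i s \in A_i \mathbb{S}$, it follows immediately that $v \in \bigoplus_{i=1}^N A_i\mathbb{S}$. As $v$ was arbitrary, this establishes the desired inclusion.

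There is no real obstacle here: the entire content is that the shared argument $s$ on the left is a \emph{special case} of the freedom on the right, where each $A_i$ is allowed its own independent argument $s_i$. The only point worth stating carefully is the distinction between using one common $s$ (left-hand side) and using possibly distinct $s_i$ (right-hand side), which is exactly why the inclusion is one-directional and generally strict. I would also note that the lemma requires no assumption that $\mathbb{S}$ be convex, a polytope, or even bounded — it holds for an arbitrary subset $\mathbb{S}\subset\mathbb{R}^n$, which is why it serves as a clean building block for generalizing Proposition \ref{prop_convexhull} to the convex hull of AH-polytopes.
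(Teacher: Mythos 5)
Your proof is correct and matches the paper's approach: the paper simply states the lemma ``straightforwardly follows from the definitions,'' and your element-chase --- writing $v = (\sum_{i=1}^N A_i)s = \sum_{i=1}^N A_i s$ with the common argument $s$ and noting each $A_i s \in A_i\mathbb{S}$ --- is exactly the argument being elided. Your remark that the left side is the special case where all Minkowski-sum arguments coincide correctly identifies why the inclusion is one-directional.
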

\begin{proof}
Straightforwardly follows from the definitions.  
\end{proof}

Note that by taking the Minkowski sums of both sides in \eqref{eq_interpret} over $i=1,\cdots,N$, and using Lemma \ref{lemma_trivial}, \eqref{eq_convexhull_H} follows.

\begin{corollary}[Convex Hull of AH-Polytopes as the Circumbody]
\label{prop_convexhull_AH}
Given $N+1$ polytopes $\mathbb{P}_x,\mathbb{P}_{y,i} ,i=1,\cdots, N$, then 
$$\bar{x} + X \mathbb{P}_x \subseteq \convh(\{\bar{y}_i+ Y_i \mathbb{P}_{y,i}\}_{i=1,\cdots,N})$$
 if there exists $\Lambda_i \ge 0, \lambda_i \ge 0, \beta_i,\Gamma_i, i=1,\cdots,N$, such that
\begin{equation}
\label{eq_convexhull_AH}
\begin{array}{c}
\Lambda_i H_x= H_{y,i} \Gamma_i, \Lambda_i h_x \le \lambda_i h_{y,i} + H_{y,i} \beta_i,\\
i=1,\cdots,N,  (1,\bar{x},X)=\displaystyle \sum_{i=1}^N (\lambda_i,\lambda_i \bar{y}_i-Y_i \beta_i,Y_i \Gamma_i). 
 \end{array}
\end{equation}
\end{corollary}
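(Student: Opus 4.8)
The plan is to reduce the statement to the geometrical interpretation \eqref{eq_interpret} already established for Proposition \ref{prop_convexhull}, then lift each inclusion through the affine map $Y_i$ and assemble the pieces with a single Minkowski sum. The vector identity $(1,\bar{x},X)=\sum_{i=1}^N(\lambda_i,\lambda_i\bar{y}_i-Y_i\beta_i,Y_i\Gamma_i)$ is what glues the assembled set back to the inbody.

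First I would observe that, for each fixed $i$, the pair of conditions $\Lambda_i H_x = H_{y,i}\Gamma_i$ and $\Lambda_i h_x \le \lambda_i h_{y,i} + H_{y,i}\beta_i$ together with $\Lambda_i \ge 0$ is exactly the lossless encoding of Corollary \ref{cor_AH} (AH-polytope in H-polytope) applied to the inbody $-\beta_i + \Gamma_i\mathbb{P}_x$ and the circumbody $\lambda_i\mathbb{P}_{y,i}=\{z \mid H_{y,i}z \le \lambda_i h_{y,i}\}$, where the scaling by $\lambda_i \ge 0$ simply passes to the right-hand side of the hyperplane description. Hence these conditions establish
$$-\beta_i + \Gamma_i\mathbb{P}_x \subseteq \lambda_i\mathbb{P}_{y,i}, \quad i=1,\cdots,N,$$
which is precisely the generalization of \eqref{eq_interpret}. (The degenerate case $\lambda_i=0$ is consistent, since $\kernel(H_{y,i})=\{0\}$ forces $\lambda_i\mathbb{P}_{y,i}=\{0\}$ and $\{z\mid H_{y,i}z\le 0\}=\{0\}$ for a bounded polytope.)

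Next I would apply the affine map $z \mapsto Y_i z + \lambda_i\bar{y}_i$ to both sides; since affine maps preserve set inclusion, this yields
$$(\lambda_i\bar{y}_i - Y_i\beta_i) + Y_i\Gamma_i\mathbb{P}_x \subseteq \lambda_i(\bar{y}_i + Y_i\mathbb{P}_{y,i}).$$
Taking the Minkowski sum over $i=1,\cdots,N$ gives on the right $\bigoplus_{i=1}^N \lambda_i(\bar{y}_i + Y_i\mathbb{P}_{y,i})$, which — because $\lambda_i \ge 0$ and $\sum_i \lambda_i = 1$ (the first component of the identity in \eqref{eq_convexhull_AH}) — is contained in $\convh(\{\bar{y}_i + Y_i\mathbb{P}_{y,i}\})$ by the definition of the convex hull. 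On the left, the constant offsets accumulate to $\sum_i(\lambda_i\bar{y}_i - Y_i\beta_i)$ and the linear parts form $\bigoplus_i Y_i\Gamma_i\mathbb{P}_x$.

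The final step, and the one requiring the most care, is to apply Lemma \ref{lemma_trivial} with $\mathbb{S} = \mathbb{P}_x$ and $A_i = Y_i\Gamma_i$ to obtain $(\sum_i Y_i\Gamma_i)\mathbb{P}_x \subseteq \bigoplus_i Y_i\Gamma_i\mathbb{P}_x$; together with the remaining two components $\sum_i Y_i\Gamma_i = X$ and $\sum_i(\lambda_i\bar{y}_i - Y_i\beta_i) = \bar{x}$, the left-hand accumulation is exactly $\bar{x} + X\mathbb{P}_x$. Chaining the inclusions then delivers $\bar{x} + X\mathbb{P}_x \subseteq \convh(\{\bar{y}_i + Y_i\mathbb{P}_{y,i}\})$. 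I expect the only genuine subtlety to be the orientation of Lemma \ref{lemma_trivial}: the inbody $\bar{x}+X\mathbb{P}_x$ must land on the \emph{contained} (smaller) side, and it does precisely because $(\sum_i A_i)\mathbb{S}\subseteq\bigoplus_i A_i\mathbb{S}$ rather than the reverse, so the loss introduced by decomposing a single copy of $\mathbb{P}_x$ into $N$ transformed copies works in our favor.
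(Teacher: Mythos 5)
Your proof is correct and follows essentially the same route as the paper: per-index containment in the scaled circumbody $\lambda_i(\bar{y}_i + Y_i\mathbb{P}_{y,i})$, Minkowski summation, Lemma \ref{lemma_trivial} to fold the $N$ transformed copies of $\mathbb{P}_x$ back into one, and the gluing identity $(1,\bar{x},X)=\sum_i(\lambda_i,\lambda_i\bar{y}_i-Y_i\beta_i,Y_i\Gamma_i)$. The only cosmetic difference is that you justify the per-index step via the lossless condition of Corollary \ref{cor_AH} in the parameter space ($-\beta_i+\Gamma_i\mathbb{P}_x \subseteq \lambda_i\mathbb{P}_{y,i}$) followed by the affine push-forward $z \mapsto Y_i z + \lambda_i\bar{y}_i$, whereas the paper invokes Theorem \ref{thm_main} on \eqref{eq_individual} directly; your explicit handling of the degenerate case $\lambda_i=0$ is a small rigor improvement the paper glosses over.
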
 
\begin{proof}
Consider $N$ polytopes $\bar{x}_i + X_i \mathbb{P}_x, i=1,\cdots,N$, such that:
\begin{equation}
\label{eq_individual}
\bar{x}_i + X_i \mathbb{P}_x \subseteq \lambda_i (\bar{y}_i + Y_i \mathbb{P}_{y,i}).
\end{equation}
By taking the Minkowski sum of both sides, we have, by the virtue of Lemma \ref{lemma_trivial}, $\sum_{i=1}^N \bar{x}_i + (\sum_{i=1}^N X_i) \mathbb{P}_x \subseteq \convh(\{\bar{y}_i+ Y_i \mathbb{P}_{y,i}\}_{i=1,\cdots,N})$. It suffices to have $(\bar{x},X)=\sum_{i=1}^N (\bar{x}_i, X_i)$ and apply Theorem \ref{thm_main} to \eqref{eq_individual}. Replace $\lambda \mathbb{P}_{y,i}$ by $\{y \in \mathbb{R}^{n_i} | H_{y,i} y \le \lambda_i h_{y,i}\}$. We have $\lambda_i \bar{y}_i-\bar{x}_i=Y_i \beta_i$ and $X_i=Y_i \Gamma_i, \Lambda_i H_x = H_{y,i} \Gamma_i$, and $\Lambda_i h_x \le \lambda_i h_{y,i} + H_{y,i} \beta_i$. With some algebraic manipulation for substituting $X_i$ and $\bar{x}_i$, \eqref{eq_convexhull_AH} follows.  
\end{proof}

While Corollary \ref{prop_convexhull} provides a sufficient condition, it may be very conservative. In the extreme case where $\mathbb{P}_{y,i}=\{0\}, i=1,\cdots,N$, the problem reduces to AH-polytope in V-polytope containment problem. It follows from \eqref{eq_interpret} that \eqref{eq_convexhull_AH} returns infeasibility unless the inbody is a point and is in the convex hull of the circumbody points $\bar{y}_i, i=1,\cdots,N$ - the loss is $100 \%$ as no volumetric inbody can be considered.

\subsection{Disjunctive Polytope Containment Problem}
\label{sec_disjunctive}
Proposition \ref{result_minkowski_right} and Proposition \ref{prop_convexhull} are special cases of Theorem \ref{thm_main}. However, Proposition \ref{prop_convexhull}, in particular, leads to a useful necessary and sufficient conditions for a class of disjunctive containment problems, which is stated in this section. This problem has strong relevance to encoding logical phenomena such as those in hybrid systems. 

\begin{proposition}[Disjunctive Containment Problem]
\label{prop_disjuntcive}
Given $N+1$ polytopes $\mathbb{P}_x,\mathbb{P}_{y,i} ,i=1,\cdots, N$, then we have
\begin{equation}
\label{eq_disjunctive}
\bigvee_{i=1}^N ( \bar{x} + X \mathbb{P}_x \subseteq \mathbb{P}_{y,i})
\end{equation}
 if and only if there exists $\Lambda_i \ge 0, \delta_i \in \{0,1\}, \beta_i,\Gamma_i, i=1,\cdots,N$, such that
\begin{equation}
\begin{array}{c}
\Lambda_i H_x= H_{y,i} \Gamma_i, \Lambda_i h_x \le \delta_i h_{y,i} - H_{y,i} \beta_i, i=1,\cdots,N,  \\
\displaystyle \sum_{i=1}^N  \delta_i =1,
\displaystyle \sum_{i=1}^N \beta_i=\bar{x},
\displaystyle \sum_{i=1}^N \Gamma_i=X.
 \end{array}
\end{equation}
\end{proposition}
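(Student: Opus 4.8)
The plan is to prove both implications, the leverage in each direction coming from the binary constraint $\delta_i \in \{0,1\}$ with $\sum_i \delta_i = 1$, which selects exactly one active index, together with the boundedness of the circumbodies, which annihilates the remaining terms.

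For necessity I would start from the assumption that \eqref{eq_disjunctive} holds, so that $\bar{x} + X\mathbb{P}_x \subseteq \mathbb{P}_{y,j}$ for some index $j$. Since $\mathbb{P}_{y,j}$ is an H-polytope, this containment is equivalent to $\mathbb{P}_x \subseteq \{z \mid H_{y,j}X z \le h_{y,j} - H_{y,j}\bar{x}\}$, an instance of H-polytope in H-polytope containment. Applying Lemma \ref{lemma_HH} yields $\Lambda_j \ge 0$ with $\Lambda_j H_x = H_{y,j} X$ and $\Lambda_j h_x \le h_{y,j} - H_{y,j}\bar{x}$. I would then set $\delta_j = 1$, $\Gamma_j = X$, $\beta_j = \bar{x}$, and $\delta_i = 0,\, \Lambda_i = 0,\, \Gamma_i = 0,\, \beta_i = 0$ for every $i \neq j$. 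The constraints for each $i \neq j$ hold trivially ($0 = 0$ and $0 \le 0$), and the three summation constraints collapse to $\delta_j = 1$, $\beta_j = \bar{x}$, $\Gamma_j = X$, which hold by construction.

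The substantive direction is sufficiency. Assuming the conditions hold, the fact that $\delta_i \in \{0,1\}$ and $\sum_i \delta_i = 1$ forces a unique index $j$ with $\delta_j = 1$ and $\delta_i = 0$ for $i \neq j$. For any $p \in \mathbb{P}_x$ and any $i$, left-multiplying $H_x p \le h_x$ by the non-negative matrix $\Lambda_i$ and substituting $\Lambda_i H_x = H_{y,i}\Gamma_i$ and $\Lambda_i h_x \le \delta_i h_{y,i} - H_{y,i}\beta_i$ gives $H_{y,i}(\beta_i + \Gamma_i p) \le \delta_i h_{y,i}$, that is, $\beta_i + \Gamma_i p \in \{z \mid H_{y,i} z \le \delta_i h_{y,i}\}$.

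The key step, which I expect to be the crux of the argument, is to observe that for $i \neq j$ this set is $\{z \mid H_{y,i} z \le 0\}$, the recession cone of the \emph{bounded} polytope $\mathbb{P}_{y,i}$, and is therefore $\{0\}$. Consequently $\beta_i + \Gamma_i p = 0$ for all $i \neq j$ and all $p \in \mathbb{P}_x$. Using $\sum_i \beta_i = \bar{x}$ and $\sum_i \Gamma_i = X$ to write $\bar{x} + X p = \sum_i (\beta_i + \Gamma_i p)$, only the $j$-th term survives, so $\bar{x} + X p = \beta_j + \Gamma_j p \in \mathbb{P}_{y,j}$ for every $p \in \mathbb{P}_x$, establishing $\bar{x} + X\mathbb{P}_x \subseteq \mathbb{P}_{y,j}$ and hence the disjunction. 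This collapse is exactly what makes the condition lossless, in contrast to the merely sufficient convex-hull analogue of Proposition \ref{prop_convexhull}: the binary constraint drives each inactive branch into the trivial recession cone, whereas the fractional weights $\lambda_i$ there allow the inactive branches to contribute nontrivially.
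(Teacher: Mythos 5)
Your proof is correct and follows essentially the same route as the paper: necessity via Lemma \ref{lemma_HH} applied to $\mathbb{P}_x \subseteq \{z \mid H_{y,j}Xz \le h_{y,j}-H_{y,j}\bar{x}\}$ with all other variables set to zero, and sufficiency via the binary selector forcing each inactive branch into $\{z \mid H_{y,i}z \le 0\}$. If anything, your pointwise conclusion $\beta_i + \Gamma_i p = 0$ from the trivial recession cone of a bounded polytope is slightly more careful than the paper's assertion that $\Gamma_i,\beta_i,\Lambda_i$ vanish identically, which implicitly requires $\mathbb{P}_x$ to be full-dimensional (and $\Lambda_i = 0$ is neither forced nor needed).
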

\begin{proof}
(Sufficiency): let $\delta_i=1$ and $\delta_j=0, \forall j \in \{1,\cdots,N\} \setminus \{i\}$. Then we have $\beta_j + \Gamma_j \mathbb{P}_x \subseteq \delta_j \mathbb{P}_{y,j}$, which enforces $\Gamma_j=0, \Lambda_j=0$, and $\beta_j=0$ for $\forall j \in \{1,\cdots,N\} \setminus \{i\}$, and establishes $\beta_i + \Gamma_i \mathbb{P}_x \subseteq \mathbb{P}_{y,i}$ and $\beta_i=\bar{x}$.
(Necessity): if there exists $i\in \{1,\cdots,N\}$ such that $\bar{x}_i + X \mathbb{P}_x \subseteq \mathbb{P}_{y,i}$, then there exists $\Lambda_i$ such that $\Lambda_i H_x = H_{y,i} X, \Lambda_i h_x \le h_{y,i} - H_{y,i} \bar{x}$. Then $\Lambda_j=0, \beta_j=0, \Gamma_j=0, \forall j \in \{1,\cdots,N\} \setminus \{i\}$ must be a feasible solution for \eqref{eq_disjunctive}.  
\end{proof}
 
An alternative way to encode \eqref{eq_disjunctive} is using Lemma \ref{lemma_HH} and big-M method. The main advantage of Proposition \ref{prop_disjuntcive} is that its binary relaxation yields the tightest feasible set, which is $\convh(\{\mathbb{P}_{y,i}\}_{i=1,\cdots,N})$. However, more variables and constraints are introduced by Proposition \ref{prop_disjuntcive} than the big-M method. The performance of two formulations is highly problem dependent, and there is no clear winner. The interested reader is referred to \cite{marcucci2018mixed} for a detailed discussion of the various mixed-integer formulations for hybrid systems. 
The following consequences of Corollary \ref{prop_convexhull} and Lemma \ref{prop_disjuntcive} are tailored for zonotopes.

\begin{corollary}[Zonotope in the Convexhull of Zonotopes]
\label{cor_zono_hull}
Let $\langle \bar{y}_i,Y_i \rangle, i=1,\cdots,N$, be N zonotopes. Then we have
\begin{equation}
\langle \bar{x},X \rangle \subseteq \convh(\bigcup_{i=1,\cdots,N}\{ \langle \bar{y}_i,Y_i \rangle \})  
\end{equation}
if there exists $\Gamma_i, \beta_i, \lambda_i \ge 0, i=1,\cdots,N$, such that
\begin{equation}
\begin{array}{c}
\|(\Gamma_i|\beta_i)\|_{\infty} \le \lambda_i, i=1,\cdots,N,\\
\displaystyle \sum_{i=1}^N \lambda_i=1,
\displaystyle \sum_{i=1}^N Y_i \beta_i + \lambda_i \bar{y}_i= \bar{x},
\displaystyle \sum_{i=1}^N Y_i \Gamma_i=X.
\end{array}
\end{equation}
\end{corollary}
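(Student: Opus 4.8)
The plan is to read Corollary~\ref{cor_zono_hull} as the box-specialization of Corollary~\ref{prop_convexhull_AH}. Writing each zonotope as an AH-polytope over the unit box, $\langle\bar x,X\rangle=\bar x+X\mathbb{B}_{n_x}$ and $\langle\bar y_i,Y_i\rangle=\bar y_i+Y_i\mathbb{B}_{n_{y,i}}$, I would set $\mathbb{P}_x=\mathbb{B}_{n_x}$ and $\mathbb{P}_{y,i}=\mathbb{B}_{n_{y,i}}$, so that $H_x=H_{y,i}=[I,-I]$ and $h_x=h_{y,i}=\underbar{1}$. Because the unit box is full-dimensional, the hypothesis on $\mathbb{P}_x$ carried into Corollary~\ref{prop_convexhull_AH} from Theorem~\ref{thm_main} is automatically satisfied, so it suffices to instantiate \eqref{eq_convexhull_AH} with this box data and simplify.

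With the boxes substituted, the aggregate equalities $\sum_i(\lambda_i,\lambda_i\bar y_i-Y_i\beta_i,Y_i\Gamma_i)=(1,\bar x,X)$ of \eqref{eq_convexhull_AH} already reproduce the conditions $\sum_i\lambda_i=1$, $\sum_i(\lambda_i\bar y_i-Y_i\beta_i)=\bar x$, and $\sum_i Y_i\Gamma_i=X$ of the claim, up to the sign of $\beta_i$. The only remaining content is the per-index pair $\Lambda_iH_x=H_{y,i}\Gamma_i$ and $\Lambda_ih_x\le\lambda_ih_{y,i}+H_{y,i}\beta_i$ with $\Lambda_i\ge0$, which for boxes reads $\Lambda_i[I,-I]=[I,-I]\Gamma_i$, $\Lambda_i\underbar{1}\le\lambda_i\underbar{1}+[I,-I]\beta_i$. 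The key step is to show this is solvable in $\Lambda_i\ge0$ if and only if $\|(\Gamma_i\,|\,\beta_i)\|_\infty\le\lambda_i$.

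For this I would reuse the block-decomposition argument from the proof of Theorem~\ref{thm_zono} verbatim, now carrying the scalar $\lambda_i$ on the right-hand side instead of $1$. Partitioning $\Lambda_i=[(\Lambda_1,\Lambda_2),(\Lambda_3,\Lambda_4)]$ into four non-negative blocks, the equality yields $\Lambda_1-\Lambda_2=\Lambda_4-\Lambda_3=\Gamma_i$, hence $|\Gamma_i|\le\Lambda_1+\Lambda_2$ and $|\Gamma_i|\le\Lambda_3+\Lambda_4$ by non-negativity; substituting these bounds into $\Lambda_i\underbar{1}\le\lambda_i\underbar{1}+[I,-I]\beta_i$ collapses the two row-blocks to $|\Gamma_i|\underbar{1}-\beta_i\le\lambda_i\underbar{1}$ and $|\Gamma_i|\underbar{1}+\beta_i\le\lambda_i\underbar{1}$, i.e.\ $|\Gamma_i|\underbar{1}+|\beta_i|\le\lambda_i\underbar{1}$, which is exactly $\|(\Gamma_i\,|\,\beta_i)\|_\infty\le\lambda_i$. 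The converse direction is realized by the explicit non-negative choice $\Lambda_1=\tfrac12(|\Gamma_i|+\Gamma_i)$, $\Lambda_2=\tfrac12(|\Gamma_i|-\Gamma_i)$ (and symmetrically for $\Lambda_3,\Lambda_4$), which meets both constraints whenever the norm bound holds.

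Finally I would reconcile the $\beta_i$ sign: Corollary~\ref{prop_convexhull_AH} delivers $\lambda_i\bar y_i-Y_i\beta_i$ whereas the statement writes $Y_i\beta_i+\lambda_i\bar y_i$, so replacing $\beta_i\mapsto-\beta_i$ matches the two, and since $\|(\Gamma_i\,|\,\beta_i)\|_\infty$ is invariant under negating $\beta_i$, the norm constraint is unchanged. I do not expect a genuine obstacle here, as the result is a direct corollary of Corollary~\ref{prop_convexhull_AH}; the only care needed is the block bookkeeping and this sign convention, and the degenerate index case $\lambda_i=0$ is automatically consistent since the norm bound then forces $\Gamma_i=0$ and $\beta_i=0$.
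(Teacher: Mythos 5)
Your proof is correct and follows exactly the route the paper intends: Corollary~\ref{cor_zono_hull} is stated there without proof as a direct consequence of Corollary~\ref{prop_convexhull_AH}, and your specialization to unit boxes, the block-decomposition elimination of $\Lambda_i$ borrowed from Theorem~\ref{thm_zono} (with $\lambda_i$ in place of $1$), and the $\beta_i \mapsto -\beta_i$ sign reconciliation supply precisely the details the paper leaves implicit. No gaps; the handling of the degenerate case $\lambda_i = 0$ is also consistent.
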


\begin{corollary}[Disjunctive Containment of Zonotopes]
\label{cor_zono_dis}
Let $\langle \bar{y}_i,Y_i \rangle, i=1,\cdots,N$, be N zonotopes. Then we have
\begin{equation}
\bigvee_{i=1}^N (\langle \bar{x},X \rangle \subseteq \langle \bar{y}_i,Y_i \rangle \})
\end{equation}
if there exists $\Gamma_i, \beta_i, \delta_i \in \{0,1\}, i=1,\cdots,N$, such that
\begin{equation}
\begin{array}{c}
\|(\Gamma_i|\beta_i)\|_{\infty} \le \delta_i, i=1,\cdots,N, \\
\displaystyle \sum_{i=1}^N \delta_i=1,
\displaystyle \sum_{i=1}^N Y_i \beta_i + \delta_i \bar{y}_i= \bar{x},
\displaystyle \sum_{i=1}^N Y_i \Gamma_i=X.
\end{array}
\end{equation}
\end{corollary}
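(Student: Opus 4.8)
The plan is to prove this as a direct consequence of the zonotope-in-zonotope condition (Theorem \ref{thm_zono}) combined with the binary selection mechanism already exploited in Proposition \ref{prop_disjuntcive}. Indeed, the statement is the disjunctive analogue of Corollary \ref{cor_zono_hull}, obtained by promoting the continuous convex-combination weights $\lambda_i \ge 0$ to binary indicators $\delta_i \in \{0,1\}$; so I expect the whole argument to reduce to showing that a single index is ``activated'' and that the surviving constraints coincide with the hypotheses of Theorem \ref{thm_zono}.

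First I would observe that since each $\delta_i \in \{0,1\}$ and $\sum_{i=1}^N \delta_i = 1$, exactly one index, say $k$, satisfies $\delta_k = 1$ while $\delta_j = 0$ for all $j \neq k$. For every such $j$, the bound $\|(\Gamma_j | \beta_j)\|_\infty \le \delta_j = 0$ forces $\Gamma_j = 0$ and $\beta_j = 0$. Substituting these vanishing blocks into the two aggregate equalities, the generator equation $\sum_{i=1}^N Y_i \Gamma_i = X$ collapses to $Y_k \Gamma_k = X$, and the center equation $\sum_{i=1}^N (Y_i \beta_i + \delta_i \bar{y}_i) = \bar{x}$ collapses to $Y_k \beta_k + \bar{y}_k = \bar{x}$, i.e.\ $\bar{y}_k - \bar{x} = Y_k(-\beta_k)$. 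Together with the remaining bound $\|(\Gamma_k | \beta_k)\|_\infty \le \delta_k = 1$, these are precisely the hypotheses of Theorem \ref{thm_zono} for the pair $\langle \bar{x}, X \rangle$ and $\langle \bar{y}_k, Y_k \rangle$, once one identifies that theorem's offset parameter with $-\beta_k$. Since the $\infty$-norm is invariant under the sign flip $\beta_k \mapsto -\beta_k$, the norm bound transfers verbatim, and Theorem \ref{thm_zono} yields $\langle \bar{x}, X \rangle \subseteq \langle \bar{y}_k, Y_k \rangle$, whence the disjunction $\bigvee_{i=1}^N (\langle \bar{x}, X \rangle \subseteq \langle \bar{y}_i, Y_i \rangle)$ holds.

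The argument involves essentially no computation, so I do not anticipate a serious obstacle; the only points requiring care are bookkeeping in nature. The first is reconciling the sign convention for the offset parameter between this statement and Theorem \ref{thm_zono}, which is harmless because only $|\beta_k|$ enters the $\infty$-norm. The second, and more conceptually important, is to note that—unlike the H-polytope circumbody case of Proposition \ref{prop_disjuntcive}, which is necessary and sufficient—the condition here is only sufficient. This is inherited from Theorem \ref{thm_zono}, which is itself only sufficient for generic generator matrices $Y_i$ and becomes lossless precisely when each $Y_i$ has a left inverse (Corollary \ref{cor_inverse}); this is exactly why the corollary is stated with ``if'' rather than ``if and only if''.
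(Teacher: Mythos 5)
Your proof is correct and takes essentially the same route the paper intends: the paper states this corollary without a standalone proof, presenting it as a direct consequence of Proposition \ref{prop_disjuntcive}'s binary-selection mechanism (exactly one $\delta_k=1$ forcing $\Gamma_j=\beta_j=0$ for $j\neq k$) combined with the zonotope condition of Theorem \ref{thm_zono}, which is precisely the argument you spell out, including the harmless sign flip $\beta_k \mapsto -\beta_k$ under the $\infty$-norm. The only minor quibble is your closing claim that the condition becomes lossless ``precisely'' when each $Y_i$ has a left inverse: Corollary \ref{cor_inverse} gives left-invertibility only as a \emph{sufficient} condition for necessity (the more general criterion being \eqref{eq_nec}), but this side remark does not affect the validity of your proof.
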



\begin{table*}
\centering
\caption{Summary of Relations. The shaded rows are contributions of this paper.}
\resizebox{\textwidth}{!}{
\begin{tabular}{|c|c|c|c|}
\hline
{\bf Relation} & {\bf Linear Encoding} (all $\Lambda$ matrices are non-negative) & {\bf Reference} & {\bf Lossless} 
\\
\hline
$\mathbb{P}_x \subseteq \mathbb{P}_y$ & $\Lambda H_x = H_y,~~\Lambda h_x \le h_y$ & Lemma \ref{lemma_HH}  & \checkmark
\\

\hline
$\bar{x} + X \mathbb{P}_x \subseteq \mathbb{P}_y$ & $\Lambda H_x = H_y X,~~\Lambda h_x \le h_y - H_y \bar{x}$ & Corollary \ref{cor_AH} & \checkmark
\\
\hline
\rowcolor{black!10}
$\bar{x} + X \mathbb{P}_x \subseteq \bar{y} + Y \mathbb{P}_y$ & $X=Y\Gamma, \bar{y}-\bar{x}=Y\beta, \Lambda H_x = H_y \Gamma,~~\Lambda h_x \le h_y + H_y \beta$ & Theorem \ref{thm_main} & \text{\sffamily X} 
\\
\hline
\rowcolor{black!10}
$\langle \bar{x},X\rangle \subseteq \langle \bar{y},Y \rangle$ 
& 
$X = Y \Gamma, \bar{y}-\bar{x} = Y \beta, \left\| (\Gamma,\beta) \right\|_\infty \le 1.$ & Theorem \ref{thm_zono} & \text{\sffamily X} \\

\hline
$\bar{x}+\displaystyle \bigoplus_{i=1}^N X_i \mathbb{P}_{x,i} \subseteq \mathbb{P}_y$ & $\Lambda_i H_{x,i} = H_y \Gamma_i, i=1,\cdots,N, \displaystyle \sum_{i=1}^N \Lambda_i h_{x,i} \le h_y - H_y \bar{x}.$ & Proposition \ref{prop_minko_in} & \checkmark 
\\

\hline
\rowcolor{black!10}

$\bar{x}+ X \mathbb{P}_x \subseteq \displaystyle \bigoplus_{i=1}^N (\bar{y}_i + Y_i \mathbb{P}_{y,i}) $ & 
$\begin{array}{c} \Lambda_i H_x= H_{y,i} \Gamma_i, \Lambda_i h_x \le h_{y,i} + H_{y,i} \beta_i, i=1,\cdots,N, \\ 
 (\bar{x},X)=\displaystyle \sum_{i=1}^N (\bar{y}_i-Y_i \beta_i,Y_i \Gamma_i). \end{array}$ 
 & Proposition \ref{result_minkowski_right} & \text{\sffamily X}  \\

\hline 
  \rowcolor{black!10}
  $\bar{x} + X \mathbb{P}_x \subseteq \convh(\{\mathbb{P}_{y,i}\}_{i=1,\cdots,N})$ 
  &
  $\begin{array}{c}
\Lambda_i H_x= H_{y,i} \Gamma_i, \Lambda_i h_x \le \lambda_i h_{y,i} + H_{y,i} \beta_i, i=1,\cdots,N,  \\
 (1,\bar{x},X)=\displaystyle \sum_{i=1}^N (\lambda_i,-\beta_i,\Gamma_i). 
 \end{array}
 $
 & 
 Proposition \ref{prop_convexhull} & \text{\sffamily X}  \\

 \hline
 \rowcolor{black!10}
 $\bar{x} + X \mathbb{P}_x \subseteq \convh(\{\bar{y}_i+ Y_i \mathbb{P}_{y,i}\}_{i=1,\cdots,N})$ & 
 $
 \begin{array}{c}
\Lambda_i H_x= H_{y,i} \Gamma_i, \Lambda_i h_x \le \lambda_i h_{y,i} + H_{y,i} \beta_i, \lambda_i \ge 0,\\
 i=1,\cdots,N, (1,\bar{x},X)=\displaystyle \sum_{i=1}^N (\lambda_i,\lambda_i \bar{y}_i-Y_i \beta_i,Y_i\Gamma_i). 
 \end{array}
 $
 & Corollary \ref{prop_convexhull_AH} & \text{\sffamily X}  \\

 \hline
 \rowcolor{black!10}
 $\displaystyle \bigvee_{i=1}^N ( \bar{x} + X \mathbb{P}_x \subseteq \mathbb{P}_{y,i})$ &
  $
 \begin{array}{c}
\Lambda_i H_x= H_{y,i} X_i, \Lambda_i h_x \le \delta_i h_{y,i} - H_{y,i} \beta_i, \delta_i \in \{0,1\},  \\
 i=1,\cdots,N, (1,\bar{x},X)=\displaystyle \sum_{i=1}^N (\delta_i,\beta_i,X_i). 
 \end{array}
 $
 & Proposition \ref{prop_disjuntcive} & \checkmark  \\

 \hline 
 \rowcolor{black!10}
$ \langle \bar{x},X \rangle \subseteq \convh(\bigcup_{i=1,\cdots,N}\{ \langle \bar{y}_i,Y_i \rangle \})$ &

$\begin{array}{c}
\|(\Gamma_i|\beta_i)\|_{\infty} \le \lambda_i, \lambda_i \ge 0, i=1,\cdots,N, \\
\displaystyle (1,\bar{x},X)=\sum_{i=1}^N (\lambda_i,Y_i \beta_i + \lambda_i \bar{y}_i, Y_i \Gamma_i).
\end{array}
$
& Corollary \ref{cor_zono_hull} & \text{\sffamily X} \\

 \hline 
 \rowcolor{black!10}
$ \displaystyle \bigvee_{i=1}^N (\langle \bar{x},X \rangle \subseteq \langle \bar{y}_i,Y_i \rangle \})
$ &

$
\begin{array}{c}
\|(\Gamma_i|\beta_i)\|_{\infty} \le \delta_i, \delta_i \in \{0,1\}, i=1,\cdots,N, \\
\displaystyle (1,\bar{x},X)=\sum_{i=1}^N (\delta_i,Y_i \beta_i + \delta_i \bar{y}_i, Y_i \Gamma_i).
\end{array}
$
& Corollary \ref{cor_zono_dis}& \text{\sffamily X} \\
\hline

\end{tabular}
}
\end{table*}

\section{Applications}
In this section, we demonstrate the usefulness of the results in this paper on applications in computational convexity and problems encountered in formal methods approach to control theory.

\subsection{Polytopic Hausdorff Distance}
\label{sec_hausdorff}
Given a metric $d: \mathbb{R}^n \times \mathbb{R}^n \rightarrow \mathbb{R}$, the Hausdorff distance provides a metric for subsets of $\mathbb{R}^n$, which is defined as follows:
\begin{equation}
\label{eq_hausdorff}
d_H(\mathbb{S}_1,\mathbb{S}_2) := \max \{ D_{12}, D_{21} \},
\end{equation}
where  $D_{12}$, $D_{21}$ are given as:
\begin{equation}
\label{eq_directed}
\begin{array}{c}
D_{12}:= \sup_{s_2 \in \mathbb{S}_2} \inf_{s_1 \in \mathbb{S}_1} d(s_1,s_2), \\
D_{21}:=  \sup_{s_1 \in \mathbb{S}_1} \inf_{s_2 \in \mathbb{S}_2} d(s_1,s_2).
\end{array}
\end{equation}
$D_{12}$ and $D_{21}$ are known as directed distances, which do not satisfy metric properties as it is possible to have $D_{21} \neq D_{12}$. Note that $D_{ij}=0$ if and only if $\mathbb{S}_i \subseteq \mathbb{S}_j$, $(i,j) \in \{(1,2),(2,1)\}$. For compact sets, the $\sup$ and $\inf$ in \eqref{eq_directed} can be replaced by $\max$ and $\min$, respectively, and $d_H(\mathbb{S}_1,\mathbb{S}_2)=0$ if and only if $\mathbb{S}_1=\mathbb{S}_2$. 

The Hausdorff distance has applications in computer vision \cite{huttenlocher1993comparing} and set-valued control \cite{runnger2017computing}. To the best of our knowledge, all existing algorithms for the exact computation of Hausdorff distances between polytopes rely on vertex enumeration. While this is not a significant problem in computer vision, where the dimensions are not greater than 2 or 3, it introduces computational bottlenecks in high dimensional control applications. Using Proposition \ref{result_minkowski_right}, we provide a linear programming approach to compute an upper-bound for the Hausdorff distance between two polytopes.  

It is straightforward to show that, if $\mathbb{S}_1, \mathbb{S}_2$ are compact sets, \eqref{eq_hausdorff} can be written as: 
\begin{equation}
\label{eq_hausdorff_compact}
d_H(\mathbb{S}_1,\mathbb{S}_2) = \max \{ \min_{\mathbb{P}_1 \subseteq \mathbb{P}_2 \oplus d_1 \mathbb{P}_{\ball}} d_1 ,  \min_{\mathbb{P}_2 \subseteq \mathbb{P}_1 \oplus d_2 \mathbb{P}_{\ball}} d_2  \},
\end{equation}
where $\mathbb{P}_{\ball}$  is the unit ball in $\mathbb{R}^n$ corresponding to the underlying norm. For 1-norm and $\infty$-norm, this ball is a polytope. For Euclidean norm and other norms, the ball has to be approximated by polytopes. There are methods to  approximate Euclidean norm ball by zonotopes. The approximation can be made arbitrarily precise by increasing the zonotope order \cite{bourgain1993approximating}. 
We denote the polytope corresponding to the unit ball in $\mathbb{R}^n$ by $\mathbb{P}_{\ball}=\{ x \in \mathbb{R}^n | H_{\ball} x \le h_{\ball} \}$.  We use Proposition \ref{result_minkowski_right} to convert \eqref{eq_hausdorff_compact} into a linear program. The norm used in the examples in this paper is $\infty$-norm.


\begin{proposition}
\label{prop_hausdorff}
Given two AH-polytopes $\mathbb{X}_1,\mathbb{X}_2 \subset \mathbb{R}^n$, where $\mathbb{X}_i=\bar{x}_i+X_i \mathbb{P}_i, \mathbb{P}_i=\{z \in \mathbb{R}^{n_i} | H_i z \le h_i\}, i=1,2, X_i \in \mathbb{R}^{n \times {n_i}}, \bar{x}_i \in \mathbb{R}^n$, the following linear program provides an upper-bound for their Hausdorff distance:
\begin{equation}
\label{eq_lp_hausdorff}
\begin{array}{ll}
\min & D  \\ 
\text{subject to} & \Lambda_1 H_1 = H_2 \Gamma_1, \Lambda_2 H_1 = [I,-I] \Gamma_2, \\
& \Lambda_1 h_1 \le h_2 + H_2 \beta_1 \\
& \Lambda_2 h_1 \le D h_{\ball} + H_{\ball} \beta_2 \\
& \Lambda_3 H_2 = H_1 \Gamma_3, \Lambda_3 H_4 = H_{\ball} \Gamma_4, \\
& \Lambda_3 h_2 \le h_2 + H_1 \beta_3 \\
& \Lambda_4 h_2 \le D h_{\ball} + H_{\ball} \beta_4 \\
& \Lambda_i \ge 0, i=1,2,3,4, \\
& \bar{x}_2 - X_2 \beta_1 - \beta_2 = \bar{x}_1, X_2 \Gamma_1 + \Gamma_2 =X_1, \\
& \bar{x}_1 - X_1 \beta_3 - \beta_4 = \bar{x}_2, X_1 \Gamma_3 + \Gamma_4 =X_2. \\
 \end{array}
\end{equation} 
\end{proposition}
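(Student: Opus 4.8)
The plan is to reduce the Hausdorff distance to two Minkowski-sum containment problems and encode each of them with Proposition~\ref{result_minkowski_right}. I would begin from the compact-set reformulation \eqref{eq_hausdorff_compact}, specialized to $\mathbb{S}_1=\mathbb{X}_1, \mathbb{S}_2=\mathbb{X}_2$, which writes $d_H(\mathbb{X}_1,\mathbb{X}_2)$ as $\max\{D_{21},D_{12}\}$, where $D_{21}$ is the smallest $d_1$ such that $\mathbb{X}_1 \subseteq \mathbb{X}_2 \oplus d_1 \mathbb{P}_{\ball}$ and $D_{12}$ is the smallest $d_2$ such that $\mathbb{X}_2 \subseteq \mathbb{X}_1 \oplus d_2 \mathbb{P}_{\ball}$ (these are the directed distances in \eqref{eq_directed}). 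The central idea is that, because $d_H$ is the \emph{maximum} of the two directed distances, it suffices to introduce a single scalar variable $D$, enforce both containments $\mathbb{X}_1 \subseteq \mathbb{X}_2 \oplus D \mathbb{P}_{\ball}$ and $\mathbb{X}_2 \subseteq \mathbb{X}_1 \oplus D \mathbb{P}_{\ball}$ simultaneously, and minimize $D$.

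Next I would instantiate Proposition~\ref{result_minkowski_right} with $N=2$ on each containment. For $\mathbb{X}_1 \subseteq \mathbb{X}_2 \oplus D \mathbb{P}_{\ball}$ the inbody is $\bar{x}_1+X_1\mathbb{P}_1$ and the circumbody is the Minkowski sum of $\bar{x}_2+X_2\mathbb{P}_2$ with the scaled ball, the latter viewed as the H-polytope $\{x \, | \, H_{\ball} x \le D h_{\ball}\}$ under the identity map and zero offset. Substituting these data into the conditions of Proposition~\ref{result_minkowski_right} reproduces precisely the block of \eqref{eq_lp_hausdorff} carrying the multipliers $\Lambda_1,\Lambda_2$ and parameters $\Gamma_1,\Gamma_2,\beta_1,\beta_2$, together with the center/generator equalities $\bar{x}_2-X_2\beta_1-\beta_2=\bar{x}_1$ and $X_2\Gamma_1+\Gamma_2=X_1$. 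Interchanging the roles of $\mathbb{X}_1$ and $\mathbb{X}_2$ yields the second block with $\Lambda_3,\Lambda_4,\Gamma_3,\Gamma_4,\beta_3,\beta_4$. The scalar $D$ enters only through the term $D h_{\ball}$, so the entire system remains linear in all decision variables.

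To conclude the upper-bound claim I would invoke the sufficiency in Proposition~\ref{result_minkowski_right}: any feasible point of \eqref{eq_lp_hausdorff} at value $D$ certifies that both containments genuinely hold with radius $D$, whence $D_{21} \le D$ and $D_{12} \le D$, and therefore $d_H(\mathbb{X}_1,\mathbb{X}_2)=\max\{D_{21},D_{12}\} \le D$. Minimizing over all feasible $D$ then gives $d_H(\mathbb{X}_1,\mathbb{X}_2) \le D^\star$, so the optimal value of the program is an upper bound. I would also note that $D$ appears in the two inequalities only through $D h_{\ball}$ with $h_{\ball} \ge 0$ (the ball contains the origin), so increasing $D$ only relaxes these constraints; the feasible set in $D$ is therefore an upward-closed interval and the minimizer $D^\star$ is well defined.

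I expect the only real difficulty to be bookkeeping: matching the data of the two containment instances to the template of Proposition~\ref{result_minkowski_right}, in particular keeping track that the ball enters with scaled right-hand side $D h_{\ball}$ under the identity transformation, and that swapping the two polytopes permutes which generator matrix and center act as the inbody versus the first summand of the circumbody. The one genuinely conceptual point is that sharing a single $D$ across both directions is exactly what the $\max$ in the Hausdorff definition demands, and that the bound is one-sided precisely because the Minkowski-sum encoding of Proposition~\ref{result_minkowski_right} is conservative rather than lossless.
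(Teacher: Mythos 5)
Your proposal is correct and follows essentially the same route as the paper: reformulate $d_H$ via \eqref{eq_hausdorff_compact} as a minimization over a single shared $D$ subject to the two containments $\mathbb{X}_1 \subseteq \mathbb{X}_2 \oplus D\,\mathbb{P}_{\ball}$ and $\mathbb{X}_2 \subseteq \mathbb{X}_1 \oplus D\,\mathbb{P}_{\ball}$, encode each with Proposition~\ref{result_minkowski_right} (with the scaled ball as the H-polytope $\{x \mid H_{\ball} x \le D h_{\ball}\}$ under the identity map), and conclude the one-sided bound from the sufficiency of that encoding. Your additional observation that the feasible set in $D$ is upward-closed (since the ball contains the origin), so that a single shared $D$ correctly captures $\max\{D_{12},D_{21}\}$, is a detail the paper's proof leaves implicit.
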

\begin{proof}
Using \eqref{eq_hausdorff_compact}, we have
\begin{equation}
\label{eq_haus}
\begin{array}{lll}
d_H(\mathbb{P}_1,\mathbb{P}_2) =& \min & D  \\ 
& \text{subject to} & \mathbb{X}_1 \subseteq \mathbb{X}_2 \oplus D \mathbb{P}_{\ball}, \\
 &&  \mathbb{X}_2 \subseteq \mathbb{X}_1 \oplus D \mathbb{P}_{\ball}, \\
 \end{array}
\end{equation} 
where $D \mathbb{P}_{\ball}$ can be written as $\{x \in \mathbb{R}^n | H_{\ball} x \le D h_{\ball} \}$. Using Proposition \ref{result_minkowski_right} and some algebraic manipulation, we arrive at \eqref{eq_lp_hausdorff}. Since Proposition \ref{result_minkowski_right} is a sufficient condition, the optimal $D$ in \eqref{eq_lp_hausdorff} satisfies the constraints in \eqref{eq_haus} is an upper bound for $d_H(\mathbb{X}_1,\mathbb{X}_2)$.   
\end{proof}

The tightness of the upper bound provided by Proposition \ref{prop_hausdorff} is as good as the necessity of Proposition \ref{result_minkowski_right}. Trivial cases are handled in a sensible way. For example, $\mathbb{X}_1=\mathbb{X}_2$ if and only if \eqref{eq_lp_hausdorff} returns 0. Note that if lines corresponding to $\mathbb{X}_2 \subseteq \mathbb{X}_1 \oplus D \mathbb{P}_{\ball}$ in \eqref{eq_lp_hausdorff} are removed, we arrive in an upper-bound for the directed distance $D_{12}$, which becomes zero if and only if $\mathbb{X}_1 \subseteq \mathbb{X}_2$. 

\begin{example}
\label{examples_zono_distance}
Consider the zonotopes in Example \ref{example_zonotopes}. If the last column of $Y$ is dropped (see Fig. \ref{fig_zonotopes} [Right]). Using Proposition \ref{prop_hausdorff} (which takes a simpler form for zonotopes but we omit its derivation for brevity), we obtain the following upper bounds $D_{12} \le 2$ and $D_{21} \le3$, so $d_H(\mathbb{Z}^*_{y},\mathbb{Z}_x) \le 3$. Augmented zonotopes $\mathbb{Z}_x \oplus D_{12} \mathbb{B}_2$ and $\mathbb{Z}^*_{y} \oplus D_{21} \mathbb{B}_2$ are shown in Fig. \ref{fig_zonoko}.

\begin{figure}
\centering
\includegraphics[width=0.22\textwidth]{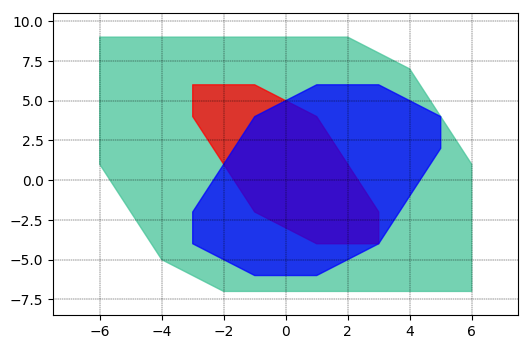}
\includegraphics[width=0.22\textwidth]{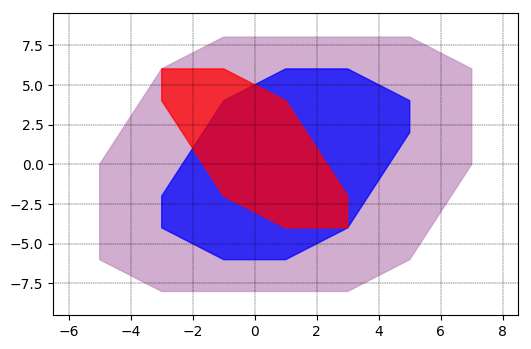}
\label{fig_zonoko}
\caption{Example \ref{examples_zono_distance}: Computing Hausdorff distance between zonotopes: [left] minimal ball added to $\mathbb{Z}_x$ (red) to contain $\mathbb{Z}^*_{y}$ (blue) [Right] minimal ball added to $\mathbb{Z}^*_{y}$ to contain $\mathbb{Z}_{x}$. The Hausdorff distance is upper bounded by $\max(2,3)=3$.}
\end{figure}
\end{example}

\begin{remark}
\label{remark_hausdorff}
An alternative characterization of the Hausdorff distance can be made using support functions of convex sets as follows: 
\begin{equation}
\label{eq_support_hausdorff}
d_H(\mathbb{C}_1,\mathbb{C}_2) = \max_{c \in \mathbb{P}_{\ball}} | \max_{x_1 \in \mathbb{C}_1} c'x_1 - \max_{x_2 \in \mathbb{C}_2} c'x_2 |,
\end{equation}
where $\mathbb{C}_1$ and $\mathbb{C}_2$ are compact convex sets. Unfortunately, $| \max_{x_1 \in \mathbb{C}_1} c'x_1 - \max_{x_2 \in \mathbb{C}_2} c'x_2 |$ is not a concave function. A practical way to obtain an estimate of $d_H(\mathbb{C}_1,\mathbb{C}_2)$ is to compute $| \max_{x_1 \in \mathbb{C}_1} c'x_1 - \max_{x_2 \in \mathbb{C}_2} c'x_2 |$ for a large number of values of $c$ sampled from the boundary of $\mathbb{P}_{\ball}$, and take the maximum. This provides a lower bound for the Hausdorff distance, and was implemented in \cite{yang2018comparison} for computing the Hausdorff distance between zonotopes. Proposition \ref{prop_hausdorff} provides an upper bound, which is relatively tight in some applications as suggested by the conservativeness of Theorem \ref{thm_zono}.
\end{remark}

\subsection{Zonotope Order Reduction}
As mentioned earlier, Minkowski sums of zonotopes have a very convenient representation. When a large number of Minkowski sums is required, the zonotope order can grow too large for practical considerations. For example, the order of a zonotope characterizing the reachable set of a discrete-time dynamical system subject to zonotopic additive disturbances constantly increases over time. Therefore, it is desired to approximate a zonotope by another zonotope of lower order. In many forward reachability problems, outer-approximation preserves safety properties. In other words, the lower order zonotope must contain the original zonotope. Therefore, zonotope order reduction subject to outer-approximation has got special attention in the controls literature \cite{girard2005reachability,kopetzki2017methods,yang2018comparison}. Proposed heuristics include methods based on interval-hulls, principal component analysis (PCA), K-means clustering of generator columns, and singular value decomposition (SVD). Characterization of the approximation tightness is non-trivial. The authors in \cite{kopetzki2017methods} used volume ratio, which its computation in high dimensions is problematic itself. The authors in \cite{yang2018comparison} used Hausdorff distance, but their method provided a lower-bound by sampling vectors from the boundary of the unit ball, as described earlier in Remark \ref{remark_hausdorff}. Here we use our results to formulate the zonotope order reduction problem, both outer-approximation and inner-approximation, as an optimization problem, where the cost is an upper-bound for the Hausdorff distance between the original and reduced zonotopes.     

\begin{proposition}[Optimal Zonotope Order Reduction - Outer Approximation]
\label{zono_outer}
Given zonotope $\mathbb{Z}=\langle \bar{x},X \rangle$, with $X \in \mathbb{R}^{n \times n_x}$, let the best reduced zonotope $\mathbb{Z}_{red}=\langle \bar{x},X^*_{red} \rangle$ be such that $\mathbb{Z} \subseteq \mathbb{Z}_{red}$, where $X^*_{red} \in \mathbb{R}^{n \times n_x^{\reduced}}$, $n_x^{\reduced} \le n_x$, be given by the optimal solution to the following optimization problem: 
\begin{equation}
\label{eq_zono_outer}
\begin{array}{lll}
X^*_{\reduced} = & \arg \min & \delta \\
& \text{subject to} & X=X_{\reduced} \Gamma_0, \\
&  & X_{\reduced}=X \Gamma_1 + \Delta, \\
&  & \| [\Gamma_0,\Gamma_1] \|_\infty \le 1, \\
&  & \| \Delta \|_\infty \le \delta. \\
\end{array}
\end{equation}
Then we have $d_{H}(\mathbb{Z},\mathbb{Z}_{\reduced}) \le \delta^*$, where $\delta^*$ is the optimal objective in \eqref{eq_zono_outer}. 
\end{proposition}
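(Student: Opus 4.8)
The plan is to bound the two directed distances comprising $d_H(\mathbb{Z},\mathbb{Z}_{\reduced})$ separately, using the outer-approximation structure to make one of them vanish. By $\eqref{eq_hausdorff_compact}$ with $\mathbb{P}_{\ball}=\mathbb{B}_n$, we have $d_H(\mathbb{Z},\mathbb{Z}_{\reduced})=\max\{d_1^*,d_2^*\}$ with $d_1^*=\min\{d:\mathbb{Z}\subseteq\mathbb{Z}_{\reduced}\oplus d\mathbb{B}_n\}$ and $d_2^*=\min\{d:\mathbb{Z}_{\reduced}\subseteq\mathbb{Z}\oplus d\mathbb{B}_n\}$. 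First I would certify $\mathbb{Z}\subseteq\mathbb{Z}_{\reduced}$: the constraint $X=X_{\reduced}\Gamma_0$, the bound $\|\Gamma_0\|_\infty\le1$ (implied by $\|[\Gamma_0,\Gamma_1]\|_\infty\le1$), and the common center $\bar{x}$ (so $\beta=0$) are exactly the hypotheses of Theorem \ref{thm_zono}. This containment gives $\mathbb{Z}\subseteq\mathbb{Z}_{\reduced}\oplus 0\cdot\mathbb{B}_n$, hence $d_1^*=0$, and therefore $d_H(\mathbb{Z},\mathbb{Z}_{\reduced})=d_2^*$; it remains only to exhibit $\delta^*$ as a feasible inflation for the second direction.

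For $d_2^*$, I would substitute the second constraint $X_{\reduced}=X\Gamma_1+\Delta$ into the generator image and split it with Lemma \ref{lemma_trivial},
\begin{equation*}
X_{\reduced}\mathbb{B}_{n_x^{\reduced}}=(X\Gamma_1+\Delta)\mathbb{B}_{n_x^{\reduced}}\subseteq X\Gamma_1\mathbb{B}_{n_x^{\reduced}}\oplus\Delta\mathbb{B}_{n_x^{\reduced}}.
\end{equation*}
Because $\|\Gamma_1\|_\infty\le1$, every $z\in\mathbb{B}_{n_x^{\reduced}}$ obeys $\|\Gamma_1 z\|_\infty\le1$, so $\Gamma_1\mathbb{B}_{n_x^{\reduced}}\subseteq\mathbb{B}_{n_x}$ and $X\Gamma_1\mathbb{B}_{n_x^{\reduced}}\subseteq X\mathbb{B}_{n_x}$; similarly $\|\Delta\|_\infty\le\delta^*$ yields $\Delta\mathbb{B}_{n_x^{\reduced}}\subseteq\delta^*\mathbb{B}_n$. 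Restoring the shared center $\bar{x}$ gives $\mathbb{Z}_{\reduced}\subseteq\mathbb{Z}\oplus\delta^*\mathbb{B}_n$, so $d_2^*\le\delta^*$ and hence $d_H(\mathbb{Z},\mathbb{Z}_{\reduced})\le\delta^*$.

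I do not expect a genuine obstacle, since feasibility of the program certifies both containments by construction; the only care needed is in bookkeeping. The combined bound $\|[\Gamma_0,\Gamma_1]\|_\infty\le1$ is stronger than the two separate bounds $\|\Gamma_0\|_\infty\le1$ and $\|\Gamma_1\|_\infty\le1$ that the argument actually uses, so it is merely conservative and the conclusion still follows. I would also keep the degenerate case $\delta^*=0$ in mind: there $\Delta=0$, and an alternative route that writes $\mathbb{Z}\oplus\delta\mathbb{B}_n=\langle\bar{x},(X,\delta I)\rangle$ and applies Theorem \ref{thm_zono} with $\Gamma=[\Gamma_1,\delta^{-1}\Delta]$ would divide by $\delta$; the Lemma \ref{lemma_trivial} decomposition above avoids this, reducing cleanly to $\mathbb{Z}_{\reduced}\subseteq\mathbb{Z}$, i.e.\ $\mathbb{Z}=\mathbb{Z}_{\reduced}$ and $d_H=0=\delta^*$.
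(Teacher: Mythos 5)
Your proof is correct, and it follows the same skeleton as the paper's: the constraint $X=X_{\reduced}\Gamma_0$ with $\|\Gamma_0\|_\infty\le 1$ and common center certifies $\mathbb{Z}\subseteq\mathbb{Z}_{\reduced}$ by Theorem \ref{thm_zono}, the remaining constraints certify $\mathbb{Z}_{\reduced}\subseteq\mathbb{Z}\oplus\delta^*\mathbb{B}_n$, and the two are combined through \eqref{eq_hausdorff_compact}. The one place you diverge is the mechanism for the second containment: the paper applies Theorem \ref{thm_zono} to the augmented zonotope $\langle 0,X\rangle\oplus\delta\langle 0,I\rangle=\langle 0,(X,\delta I)\rangle$, writing $X_{\reduced}=(X,\delta I)[\Gamma_1,\Gamma_2]$ with $\|[\Gamma_1,\Gamma_2]\|_\infty\le 1$ and then substituting $\Delta:=\delta\Gamma_2$ to recover the constraints of \eqref{eq_zono_outer}, whereas you prove the containment from scratch via Lemma \ref{lemma_trivial} and elementary induced-$\infty$-norm bounds ($\Gamma_1\mathbb{B}_{n_x^{\reduced}}\subseteq\mathbb{B}_{n_x}$, $\Delta\mathbb{B}_{n_x^{\reduced}}\subseteq\delta^*\mathbb{B}_n$). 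The two certificates are equivalent; yours is more self-contained and actually fills in what the paper dismisses as ``the rest of the proof is straightforward,'' and it also makes explicit the observation that $\|[\Gamma_0,\Gamma_1]\|_\infty\le 1$ only encodes the two separate bounds (the stack is even dimensionally informal, since $\Gamma_0$ and $\Gamma_1$ have different column counts). One small remark: your concern about dividing by $\delta$ in the augmented-generator route at $\delta^*=0$ is not a real obstacle there either, because $\|\Delta\|_\infty\le 0$ forces $\Delta=0$ and one may simply take $\Gamma_2=0$; still, your decomposition handles all cases uniformly without even raising the issue.
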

\begin{proof}
The proof is based on Proposition \ref{result_minkowski_right} and Proposition \ref{prop_hausdorff}.  From Theorem \ref{thm_zono}, we have $\langle 0,X_{red} \rangle \subseteq \langle 0,X \rangle \oplus \delta \langle 0,I \rangle $ if $X_{\reduced}=(X,\delta I)[\Gamma_1,\Gamma_2]$, where $[\Gamma_0,\Gamma_1] \|_\infty \le 1$. Define new variable $\Delta := \delta \Gamma_2$, and we have $\|\Delta \|_\infty \le \delta$, and the rest of the proof is straightforward.  
\end{proof}
\begin{corollary}[Optimal Zonotope Order Reduction - Inner Approximation]
\label{zono_inner}
Given zonotope $\mathbb{Z}=\langle \bar{x},X \rangle$, with $G \in \mathbb{R}^{n \times n_x}$, let the best reduced zonotope $\mathbb{Z}_{red}=\langle \bar{x},X^*_{red} \rangle$ be such that $\mathbb{Z}_{red} \subseteq \mathbb{Z}$, where $X^*_{red} \in \mathbb{R}^{n \times n_x^{\reduced}}$, $n_x^{\reduced} \le n_x$, is given by the optimal solution to the following optimization problem: 
\begin{equation}
\label{eq_zono_inner}
\begin{array}{lll}
X^*_{\reduced} = & \arg \min & \delta \\
& \text{subject to} & X_{\reduced} = X \Gamma_0, \\
&  & X=X_{\reduced} \Gamma_1 + \Delta, \\
&  & \| [\Gamma_0,\Gamma_1] \|_\infty \le 1, \\
&  & \| \Delta \|_\infty \le \delta.\\
\end{array}
\end{equation}
Then we have $d_{H}(\mathbb{Z},\mathbb{Z}_{red}) \le \delta^*$, where $\delta^*$ is the optimal objective in \eqref{eq_zono_inner}. 
\end{corollary}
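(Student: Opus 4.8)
The plan is to treat this as the mirror image of Proposition \ref{zono_outer}, exploiting the inner/outer symmetry. Since the Hausdorff distance is translation-invariant and both $\mathbb{Z}$ and $\mathbb{Z}_{red}$ share the centroid $\bar{x}$, I would first reduce to the centered case $\bar{x}=0$, so that every invocation of Theorem \ref{thm_zono} takes its centered form with $\beta=0$ and only the generator relations remain.

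Next I would check that any feasible point of \eqref{eq_zono_inner} genuinely yields an inner approximation $\mathbb{Z}_{red}\subseteq\mathbb{Z}$. The constraint $X_{red}=X\Gamma_0$, together with $\|[\Gamma_0,\Gamma_1]\|_\infty\le 1$ (which forces $\|\Gamma_0\|_\infty\le 1$ regardless of how the block $[\Gamma_0,\Gamma_1]$ is read, since the combined maximum absolute row sum dominates each piece), is exactly the centered hypothesis of Theorem \ref{thm_zono} applied to $\langle 0,X_{red}\rangle\subseteq\langle 0,X\rangle$. The containment then follows immediately.

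The core step is bounding $d_H(\mathbb{Z},\mathbb{Z}_{red})$. The key observation is that inner containment annihilates exactly one of the two directed distances in \eqref{eq_hausdorff_compact}: because $\mathbb{Z}_{red}\subseteq\mathbb{Z}$, the minimal inflation $d_2$ with $\mathbb{Z}_{red}\subseteq\mathbb{Z}\oplus d_2\mathbb{P}_{\ball}$ is zero, so $d_H(\mathbb{Z},\mathbb{Z}_{red})$ collapses to the single term $\min\{d_1:\mathbb{Z}\subseteq\mathbb{Z}_{red}\oplus d_1\mathbb{P}_{\ball}\}$. It therefore suffices to certify that $\delta$ is a feasible inflation. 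Here I would reuse the substitution from Proposition \ref{zono_outer}: introduce $\Gamma_2$ with $\Delta=\delta\Gamma_2$, so that $X=X_{red}\Gamma_1+\Delta$ reads $X=(X_{red},\delta I)[\Gamma_1,\Gamma_2]$, while $\|\Delta\|_\infty\le\delta$ is equivalent to $\|\Gamma_2\|_\infty\le 1$. Combined with $\|\Gamma_1\|_\infty\le 1$ from the joint norm bound, this is precisely the centered hypothesis of Theorem \ref{thm_zono} certifying $\langle 0,X\rangle\subseteq\langle 0,(X_{red},\delta I)\rangle=\mathbb{Z}_{red}\oplus\delta\mathbb{B}_n$. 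Taking $\mathbb{P}_{\ball}=\mathbb{B}_n$ yields $d_H(\mathbb{Z},\mathbb{Z}_{red})\le\delta^*$ at the optimum.

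I expect the main obstacle to be bookkeeping rather than conceptual: correctly splitting the single norm constraint $\|[\Gamma_0,\Gamma_1]\|_\infty\le 1$ into the two independent zonotope-containment certificates (one for $\mathbb{Z}_{red}\subseteq\mathbb{Z}$, one for the Minkowski inflation) with their differing block dimensions, and handling the degenerate case $\delta=0$ separately (where $\Delta=0$ forces $\mathbb{Z}=\mathbb{Z}_{red}$). One should also emphasize that, because Theorem \ref{thm_zono} is only sufficient, the conclusion is an \emph{upper} bound on $d_H$ rather than an equality. The conceptual crux is simply recognizing that inner containment zeroes out the directed distance \emph{opposite} to the one killed in the outer case, which is what makes the scalar objective $\delta$ a legitimate Hausdorff bound.
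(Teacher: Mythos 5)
Your proposal is correct and takes essentially the same route as the paper: the paper proves the outer case (Proposition \ref{zono_outer}) via the reparameterization $\Delta=\delta\Gamma_2$, two invocations of Theorem \ref{thm_zono} (one for the direct containment $X_{red}=X\Gamma_0$, one for $\mathbb{Z}\subseteq\mathbb{Z}_{red}\oplus\delta\mathbb{B}_n$ written as $\langle 0,(X_{red},\delta I)\rangle$), and the characterization \eqref{eq_hausdorff_compact}, and it states the inner case as the mirror image --- which is exactly your argument. Your extra bookkeeping (centering at $\bar{x}$, observing that $\mathbb{Z}_{red}\subseteq\mathbb{Z}$ makes one directed distance vanish, and treating $\delta=0$) simply fills in details the paper leaves as ``straightforward.''
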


One of the main advantages of Proposition \eqref{eq_zono_outer}  and Corollary \eqref{eq_zono_inner} is that the order of the reduced zonotope is defined arbitrarily by the user. This is contrast to a many available methods in \cite{kopetzki2017methods} in which reduced zonotopes of only specific orders can be considered. Both \eqref{eq_zono_outer}  and \eqref{eq_zono_inner} have a bilinear matrix equality term, which makes the optimization problem non-convex. Our solution is based on bilinear alternations, which requires an initial guess. The methods described in \cite{kopetzki2017methods} can be used for an initial guess. Note that finding an initial feasible solution to both \eqref{eq_zono_outer} and \eqref{eq_zono_inner} is straightforward - let the zonotope to be very large (respectively, very small) for outer (respectively, inner) approximation - however finding a ``good" initialization can be difficult.

\begin{example}
\label{example_zono_red}
\begin{figure*}
\centering
\includegraphics[width=0.18\textwidth]{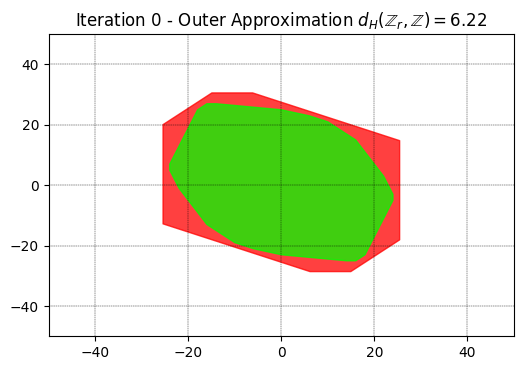}
\includegraphics[width=0.18\textwidth]{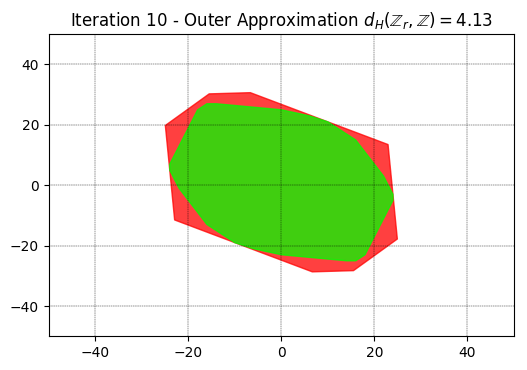}
\includegraphics[width=0.18\textwidth]{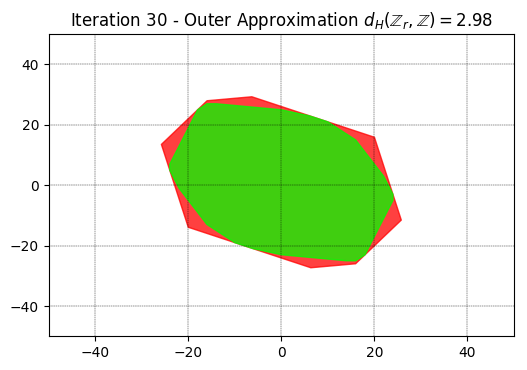}
\includegraphics[width=0.18\textwidth]{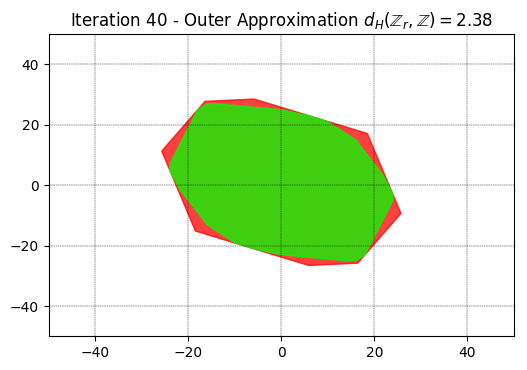}
\includegraphics[width=0.18\textwidth]{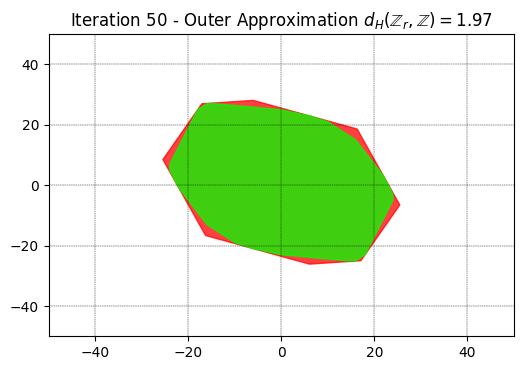}
\includegraphics[width=0.18\textwidth]{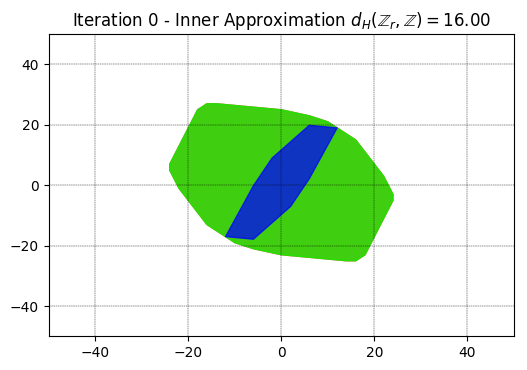}
\includegraphics[width=0.18\textwidth]{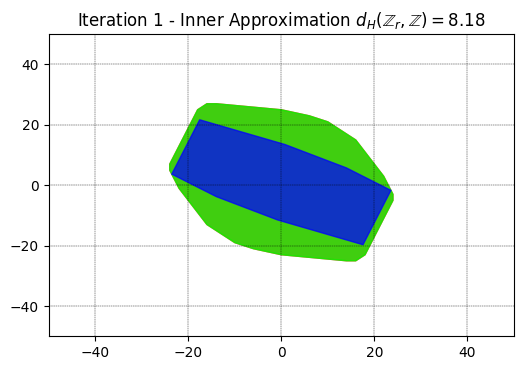}
\includegraphics[width=0.18\textwidth]{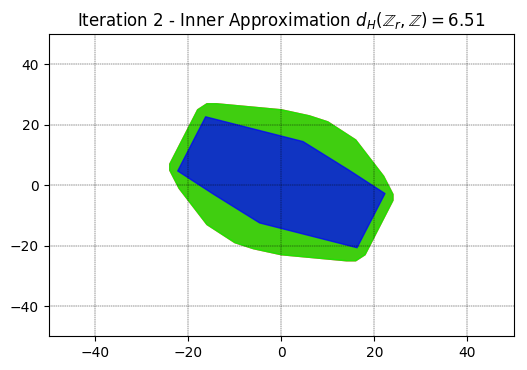}
\includegraphics[width=0.18\textwidth]{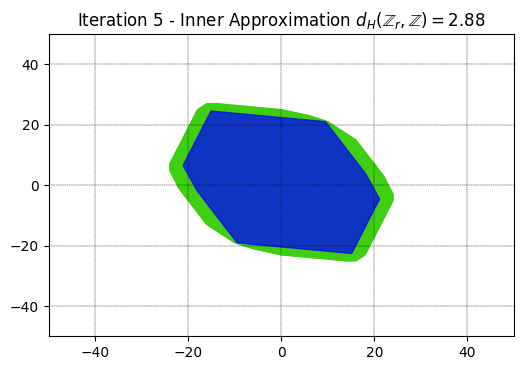}
\includegraphics[width=0.18\textwidth]{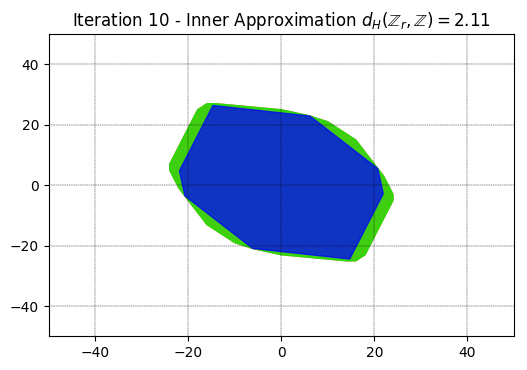}
\label{fig_zonotope_order}
\caption{Example \ref{example_zono_red}: Optimization-based iterative zonotope order reduction by a factor of 3: [Top] Iterations of projected gradient decent through SLP for order reduction subject to outer-approximation [Bottom] Iterations of bilinear alternation through SLP for zonotope order reduction subject to inner-approximation.}
\end{figure*} 

We consider a 2D zonotope with order 6, i.e. 12 generators. We aim to find both the best inner and outer approximations of order 2.  In order to solve \eqref{eq_zono_outer}, we initialized $X^0_{\reduced}$ by a random matrix $R$. Then we found a diagonal matrix $D$ with minimal sum of the squares of the diagonal terms such that $X=R D \Gamma_0$ is feasible for some $\|\Gamma_0\|_\infty$. Therefore, $X^0_{\reduced}=RD$ becomes a tight initial guess. We multiplied $X^0_{\reduced}$ by $1.05$ to break tightness a little, and then solved the nonlinear optimization in \eqref{eq_zono_outer} by finding the optimal changes in the matrix $X_{\reduced}$. Each iteration is a linear program where a maximal change of $0.1$ in each entry in $X_{\reduced}$ is allowed, and the following approximation is used to break the bilinearity:
$$
X^{i+1}_{\reduced} \Gamma^{i+1}_0 \approx  (X^{i+1}_{\reduced}-X^{i}_{\reduced}) \Gamma^{i}_0 + X^{i}_{\reduced} \Gamma^{i+1}_0,
$$
where $i \in \mathbb{N}$ is the index of iterations.  
 The iterations are shown in Fig. \ref{fig_zonotope_order} [Top]. Note that since $X_{\reduced}$ has smaller size than $X$, keeping $\Gamma_0$ fixed and solving for $X_{\reduced}  $ and vice versa is not applicable for \eqref{eq_zono_outer} - given $\Gamma_0$, the solution for $X_{\reduced}$ is unique. Therefore, the method resembles to projected gradient decent. On the other hand, the case for \eqref{eq_zono_inner} is simpler as we use bilinear alternations between $X_{\reduced}$ and $\Gamma_1$ to find the inner-approximation. The convergence is often faster than the projected gradient decent. Iterations are shown in Fig. \ref{fig_zonotope_order} [Bottom]. The scripts of both examples are available in the \texttt{test} folder in our python package \texttt{pypolycontain}.     
\end{example}

\subsection{Orthogonal Projections}

Computing orthogonal projections of polytopes is a central problem in many applications such as computing feasible regions of model predictive controllers. The exact computation of projections requires variable elimination, which is a costly procedure. Vertex-based projections are more convenient to implement but do not scale well in high dimensions. Moreover, it is often preferred to have the H-polytope rather than a V-polytope of the projection. Not only the projection procedure  is computationally intense, but the number of hyperplanes in the projected polytope itself may be inevitably too large, in particular when a high dimensional polytopes are projected into low dimensional spaces.

We use the results of this paper to introduce an output-sensitive inner-approximation alternative to orthogonal projection that is based on a single optimization problem. Consider the set $\mathbb{F}=\{ (x,u) \in \mathbb{R}^{n+m} | H x + Fu \le g\}$. The projection into $x$-space is given by $\mathbb{F}_{\proj}=\{x | \exists u \in \mathbb{R}^m, (x,u) \in \mathbb{F}\}$. 
We desire to find $H_x$ and $h_x$ in $\mathbb{X}= \{ x \in \mathbb{R}^n | H_x x \le h_x \}$ such that $\mathbb{X} \subseteq \mathbb{F}_{\proj}$ and $d_H(\mathbb{X},\mathbb{F}_{\proj})$ is minimized. The choice of the number of rows in $H_x$ is made by the user - we expect $d_H(\mathbb{X},\mathbb{F}_{\proj})$ to decrease with the number of rows. While there has been theoretical results on the quality of approximating projections of high dimensional polytopes by polytopes with controlled number of facets \cite{barvinok2006computational}, to the best of our knowledge, the following approach is unique in the respect that it is optimization-based, and provides a guaranteed upper bound on the Hausdorff distance between the approximated projected polytope and the actual one, which does not need to be computed.   

We parameterize $\mathbb{X}$ by $\bar{x} + \{ x \in \mathbb{R}^n | H_x x \le \underbar{1} \}$. Note that by design, $\mathbb{X}$ contains $\bar{x}$. The optimal values in $H_x$ are given by the following optimization problem:  
\begin{equation}
\begin{array}{rl}
H_x = & \arg \min \epsilon \\
\text{subject to} & \mathbb{F}=\{ (x,u) \in \mathbb{R}^{n+Nm} | H x + Fu \le g\} \\
& \mathbb{X}= \{ x \in \mathbb{R}^n | H_x \le \underbar{1} \}, \\
& \bar{x} + \mathbb{X} \subseteq (I,0) \mathbb{F} \\
& (I,0) \mathbb{F} \subseteq \bar{x} + \mathbb{X} \oplus \epsilon \mathbb{B} 
\end{array}
\end{equation}
Using the containment encoding framework provided in this paper and some algebraic manipulation, we have the following result. 


\begin{proposition}
\label{prop_projection} 
Given $\mathbb{F}=\{ (x,u) \in \mathbb{R}^{n+m} | H x + Fu \le g\}$, let $\mathbb{F}_{\proj}=\{x | \exists u \in \mathbb{R}^m, (x,u) \in \mathbb{F}\}$. Consider the following optimization problem:
\begin{equation}
\begin{array}{rl}
H_x = & \arg \min {\epsilon}, \\
\text{subject to} & {\Lambda_0 H_x} = H + F {\Gamma} \\
 & {\Lambda_0} \underbar{1} \le g - H \bar{x} + F {\beta_u}, \\
& {\Lambda_1} H = {H_x X_1}, \\
& {\Lambda_1} F = {H_x X_2}, \\
& {\Lambda_2} H = H_B - H_{\ball} {X_1}, \\
& {\Lambda_2} F = - H_{\ball} {X_2}, \\
& {\Lambda_1} g \le h_{\ball} - {H_x} {\beta_x}, \\
& {\Lambda_2} g \le {\epsilon} h_{\ball} - H_{\ball} \bar{x} + H_{\ball} {\beta_x}, \\
&  {\Lambda_0, \Lambda_1, \Lambda_2} \ge 0. \\
\end{array}
\label{eq_projection}
\end{equation}
Let $H_x$ be a feasible solution with cost $\epsilon$, and $\mathbb{X}=\bar{x}+ \{x \in \mathbb{R}^n | H_x x \le \underbar{1} \}$. Then we have $\mathbb{X} \subseteq \mathbb{F}_{\proj}$ and $d_H (\mathbb{X}, \mathbb{F}_{\proj}) \le \epsilon.$ 
\end{proposition}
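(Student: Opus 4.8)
The plan is to read the optimization \eqref{eq_projection} as the concatenation of two independent containment certificates — one enforcing $\mathbb{X} \subseteq \mathbb{F}_{\proj}$, the other bounding the Hausdorff distance — and to verify that each block is literally an instance of a result already established. First I would write the projection as an AH-polytope, $\mathbb{F}_{\proj} = (I,0)\mathbb{F}$, where $\mathbb{F} = \{z \in \mathbb{R}^{n+m} \mid [H,F]z \le g\}$ and $(I,0) \in \mathbb{R}^{n \times (n+m)}$ is the projection matrix, and write the candidate box as $\mathbb{X} = \bar{x} + I\mathbb{P}$ with $\mathbb{P} = \{x \mid H_x x \le \underbar{1}\}$. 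Note that $\mathbb{P}$ is full-dimensional, since the origin satisfies all its inequalities strictly, so Theorem \ref{thm_main} applies with $\mathbb{P}$ as the inbody polytope.

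Second, I would instantiate Theorem \ref{thm_main} on $\mathbb{X} \subseteq \mathbb{F}_{\proj}$ using inbody data $(X,\bar{x},H_x,\underbar{1})$ and circumbody data $((I,0),0,[H,F],g)$. Partitioning the resulting $\Gamma$ and $\beta$ into their $n$- and $m$-blocks, the identity $X = Y\Gamma$ forces the $x$-block of $\Gamma$ to be $I$ while leaving the $u$-block free (this is the $\Gamma$ of the proposition), and $\bar{y}-\bar{x} = Y\beta$ fixes the $x$-block of $\beta$ to $-\bar{x}$ while leaving the $u$-block free (this is $\beta_u$). Substituting these into $\Lambda H_x = H_y\Gamma$ and $\Lambda h_x \le h_y + H_y\beta$ reproduces exactly the first block $\Lambda_0 H_x = H + F\Gamma$ and $\Lambda_0\underbar{1} \le g - H\bar{x} + F\beta_u$; by the sufficiency of Theorem \ref{thm_main}, feasibility of this block certifies $\mathbb{X} \subseteq \mathbb{F}_{\proj}$.

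Third, I would handle the distance. Because the first block already delivers $\mathbb{X} \subseteq \mathbb{F}_{\proj}$, the term $\min\{d_1 : \mathbb{X} \subseteq \mathbb{F}_{\proj} \oplus d_1\mathbb{P}_{\ball}\}$ in \eqref{eq_hausdorff_compact} vanishes, so $d_H(\mathbb{X},\mathbb{F}_{\proj})$ equals the least $\epsilon$ with $\mathbb{F}_{\proj} \subseteq \mathbb{X} \oplus \epsilon\mathbb{P}_{\ball}$. I would encode this remaining containment with Proposition \ref{result_minkowski_right}, taking $\mathbb{F}_{\proj} = (I,0)\mathbb{F}$ as the inbody and the two-term Minkowski sum $\mathbb{X} \oplus \epsilon\mathbb{P}_{\ball}$ — with term data $(I,\bar{x},H_x,\underbar{1})$ and $(I,0,H_{\ball},\epsilon h_{\ball})$ — as the circumbody. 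Writing the per-term multipliers as $\Lambda_1,\Lambda_2$ and column-partitioning the first term's $\Gamma$ as $[X_1,X_2]$ (the second term's then being $[I - X_1,-X_2]$, forced by $\sum_i Y_i\Gamma_i = (I,0)$), the Proposition's equalities collapse to $\Lambda_1 H = H_x X_1$, $\Lambda_1 F = H_x X_2$, $\Lambda_2 H = H_{\ball} - H_{\ball}X_1$, $\Lambda_2 F = -H_{\ball}X_2$, with the offset inequalities giving the last two inequality rows of \eqref{eq_projection}. Feasibility of this block then yields $\mathbb{F}_{\proj} \subseteq \mathbb{X} \oplus \epsilon\mathbb{P}_{\ball}$, hence $d_H(\mathbb{X},\mathbb{F}_{\proj}) \le \epsilon$.

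Finally I would assemble the two certificates: any $H_x$ feasible for \eqref{eq_projection} with value $\epsilon$ satisfies both containments, which is the claim. I expect the main obstacle to be bookkeeping rather than conceptual difficulty: one must treat $\mathbb{F}_{\proj}$ as an AH-polytope built from the non-left-invertible matrix $(I,0)$ consistently, as the circumbody in the first containment and the inbody in the second, and track the block partitions and sign conventions so that the generic conditions of Theorem \ref{thm_main} and Proposition \ref{result_minkowski_right} land precisely on the rows of \eqref{eq_projection}. I would also flag that both underlying results are only sufficient, so the proposition yields a guaranteed upper bound on $d_H$ rather than its exact value — consistent with the statement, which asserts only $\mathbb{X} \subseteq \mathbb{F}_{\proj}$ and $d_H \le \epsilon$.
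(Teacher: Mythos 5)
Your proposal is correct and takes essentially the same route as the paper, which derives \eqref{eq_projection} exactly as you do: encoding $\bar{x}+\mathbb{X} \subseteq (I,0)\mathbb{F}$ through Theorem~\ref{thm_main} (your first block, yielding $\Lambda_0,\Gamma,\beta_u$) and $(I,0)\mathbb{F} \subseteq \bar{x}+\mathbb{X} \oplus \epsilon \mathbb{P}_{\ball}$ through Proposition~\ref{result_minkowski_right} (your second block, with $\Gamma_1=(X_1,X_2)$ and $\Gamma_2=(I-X_1,-X_2)$ forced by the generator-sum condition), then combining $D_{12}=0$ with the bound on $D_{21}$. Your careful instantiation in fact supplies the ``algebraic manipulation'' the paper leaves implicit, and it correctly recovers the constraint set up to what appear to be typographical slips in the paper's display ($H_B$ for $H_{\ball}$, $h_{\ball}$ for $\underbar{1}$ in the $\Lambda_1$ inequality, and the sign in front of $H_{\ball}\bar{x}$), which your derivation silently corrects.
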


The optimization problem \eqref{eq_projection} has some bilinear terms, therefore it is difficult to solve it to global optimality. Nevertheless, we can use local methods to find suboptimal solutions. 

\begin{example}
\label{example_ortho}
\begin{figure*}
\centering
\includegraphics[width=0.18\textwidth]{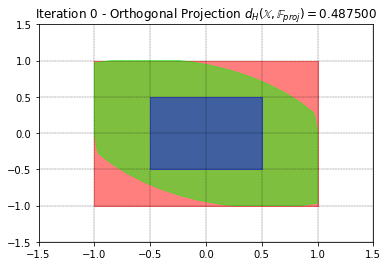}
\includegraphics[width=0.18\textwidth]{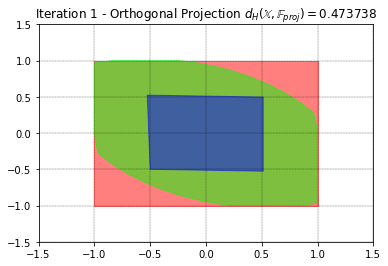}
\includegraphics[width=0.18\textwidth]{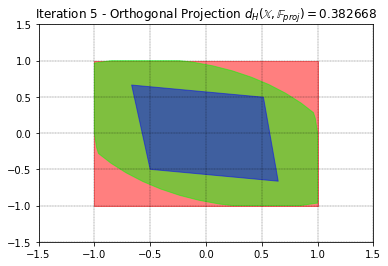}
\includegraphics[width=0.18\textwidth]{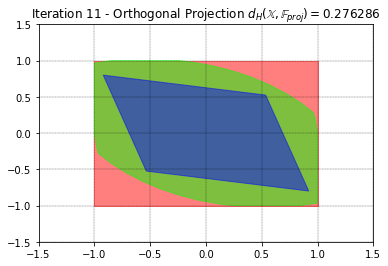}
\includegraphics[width=0.18\textwidth]{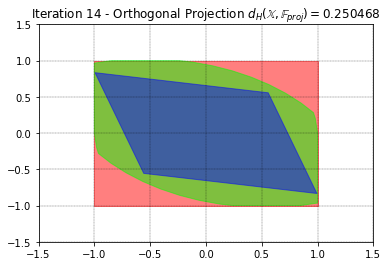}
\includegraphics[width=0.18\textwidth]{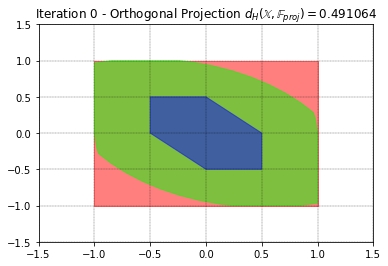}
\includegraphics[width=0.18\textwidth]{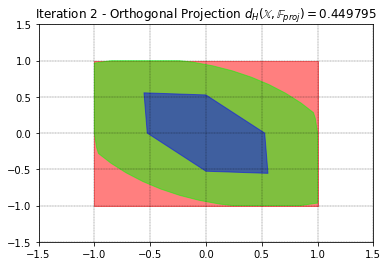}
\includegraphics[width=0.18\textwidth]{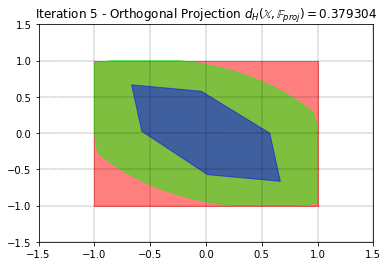}
\includegraphics[width=0.18\textwidth]{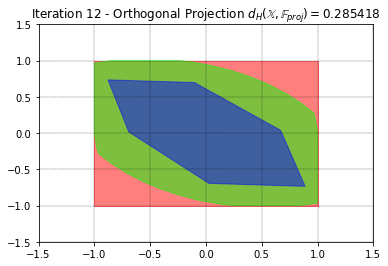}
\includegraphics[width=0.18\textwidth]{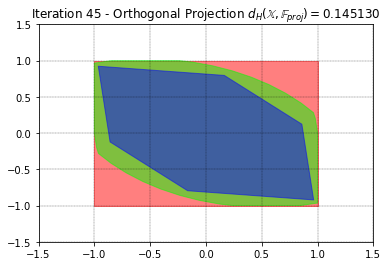}
\label{fig_projection}
\caption{Example \ref{example_ortho}: Optimization-based iterative orthogonal projection from $\mathbb{R}^{22}$ to $\mathbb{R}^{2}$ to obtain the feasible set of a model predictive controller. The state box constraint is shown in red, the actual $\mathbb{F}_{\proj}$ is shown in green, and the inner-approximation $\mathbb{X}$ is shown in blue. [Top] 4 hyperplanes [Bottom] 6 hyperplanes. The green polytope is the actual projection obtained by Fourier-Motzkin elimination of $20$ variables. }
\end{figure*} 
In this example, we wish to compute an inner-approximation of the feasible set of a model predictive controller. Consider the linear system $x_{t+1}=Ax_t+Bu_t$, where 
\begin{equation*}
A=\left( \begin{array}{cc}
1 & 0.1 \\
-0.1 & 1 
\end{array}
\right),
B=\left( \begin{array}{c}
0 \\
0.1 
\end{array}
\right).
\end{equation*}
We impose hard constraints $x \in [-1,1]^2$ and $u \in [-1,1]$. We wish to compute the set of states that can be steered into the origin in $N=20$ steps, while satisfying the box constraints. We have:
$$
\begin{array}{c}
\mathbb{F}= \{ x_0, u_0,\cdots,u_{N-1} | x_N =0 \}, \\
\mathbb{F}_{\proj}= \{ x_0 \big| \exists u_0,\cdots,u_{N-1}, \st x_N =0 \}, 
\end{array}
$$ 
where 
$$
x_n=A^n x_0 + \sum_{\tau=0}^n A^{n-\tau-1}Bu_\tau.
$$
We need to find $\mathbb{F}_{proj}$, which is provided by projecting $N+2=22$ dimensional polytope of joint state and control sequence space into 2-dimensional state-space. By writing the constraints described above, the number of hyperplanes in $\mathbb{F}$ is 128 (some may be redundant). 

We may use two methods to compute this projection. The first is the exact and is given by Fourier-Motzkin elimination method. Its computation is costly as $20$ variables are eliminated, and at each iteration, many linear programs are required to remove redundant hyperplanes. The method returns the exact $\mathbb{F}_{\proj}$ with 28 irreducible hyperplanes. 

As an alternative, we use the method described in this paper in Proposition \ref{prop_projection}. We consider two initializations: $H_x$ with $4$ hyperplanes (a box), and $H_x$ with $6$ hyperplanes. We consider maximum step size of $0.05$ in each entry of matrix variables. We let $\bar{x}=0$. Snapshots of the gradient decent iterations are shown in Fig. \ref{fig_projection}. It is observed that we are able to closely inner-approximate the MPC feasible set using user-defined number of hyperplanes, while not encountering any potential exponential blow up due to vertex elimination or the number of required hyperplanes. Note that the reported Hausdorff distances are the values of $\epsilon$ in \eqref{eq_projection}, which are  guaranteed to be upper-bounds and are computed without explicitly knowing $\mathbb{F}_{\proj}$.   

\endproof
\end{example}

\subsection{Verification and Control of Hybrid Systems}
\label{sec_control}

In this section, we study a particular application of our results to formal synthesis of controllers for hybrid systems. The method is based on our framework in \cite{sadraddini2018sampling}, where polytope-to-polytope control strategies were used to design feedback strategies for piecewise affine systems. The task is reaching a goal region, which itself is given as a union of polytopes in the state-space, while respecting state and control constraints. The polytopes form a tree that grows backward from the goal using a sampling-based heuristic similar to rapidly-exploring random trees (RRT) \cite{lavalle2006planning}. The idea is also closely related to sampling-based feedback motion planning using LQR-trees \cite{tedrake2010lqr} - but tailored for piecewise affine systems. The central technique is computing \emph{polytopic trajectories} that are characterized by polytopes $\mathbb{P}_t, t=0,1,\cdots,T$, in the state-space, where $\mathbb{P}_t$ is mapped to $\mathbb{P}_{t+1}$ with an appropriate control law. The nodes of the tree are polytopes, and edges represent available one-step controlled transitions.  

The polytopes are represented in AH-polytope form, where 
\begin{equation}
\label{eq_parapolytope}
\mathbb{P}_t = \bar{x}_t + G_t \mathbb{P}_b,
\end{equation} 
$\mathbb{P}_{b} \subset \mathbb{R}^q$ is a user-defined base polytope, and $\bar{x}_t \in \mathbb{R}^n, G_t \in \mathbb{R}^{n \times q}$ define the affine transformation. Note that when $\mathbb{P}_b$ is chosen as a box, all polytopes become zonotopes. Let the affine dynamics be ${x}_{t+1}=A_t x_t + B_t u_t + c_t$. Consider the parameterized control strategy 
\begin{equation}
\label{eq_u}
u_t=\bar{u}_t+\theta_t\zeta, x_t=\bar{x}_t+G_t \zeta,
\end{equation} 
where $\zeta \in \mathbb{R}^q$ is an implicit variable. Unless $G_t$ is invertible, the map from $x_t$ to $u_t$ is given by a linear/quadratic program, with an ad-hoc cost function (for example, the norm of $u_t$). Note that multiple $u_t$ values for state $x_t$ may satisfy \eqref{eq_u}. Using control law \eqref{eq_u}, the evolution of $\mathbb{P}_t$ satisfies the following relation:
\begin{equation}
\label{eq_parapolytope_next}
\mathbb{P}_{t+1} = A_t\bar{x}_t+B_t\bar{u}_t + c_t + (A_tG_t+B_t \theta_t) \mathbb{P}_b
\end{equation} 
or, equivalently: 
\begin{equation}
\label{eq_parapolytope_next}
\bar{x}_{t+1}= A_t\bar{x}_t+B_t\bar{u}_t + c_t, G_{t+1} = A_tG_t+B_t \theta_t.
\end{equation}
Therefore, we have linear encodings for the polytopic trajectory. We can also consider mixed-integer formulations to encode hybrid relations between $(A_t,B_t,c_t)$ and $(x_t,u_t)$, but the details are omitted here. The full algorithm and its theoretical guarantees are reported in \cite{sadraddini2018sampling}. Here we only provide the essential details related to the contributions of this paper.

Using the results of this paper, we improve a crucial computational part of the algorithm in \cite{sadraddini2018sampling}. When adding a branch to the tree, we design a polytopic trajectory such that the final polytope is contained within one of the polytopes in the tree, i.e. an instance of polytope containment problem. In \cite{sadraddini2018sampling}, we explored efforts to compute the H-polytope form of these AH-polytopes, either using Fourier-Motzkin elimination which was very slow and numerically unstable for our applications, or approximating the transformation matrix by a left-invertible one which also caused numerical issues as fine approximations led to ill-conditioned matrices. Here we use our results on AH-polytope in AH-polytope containment, in particular zonotope containment, to present an alternative approach that does not require H-polytope forms of the polytopes in the tree.

\begin{example}
\label{example_tobia}
We adopt example 1 from \cite{marcucci2017approximate}, which was also studied in \cite{sadraddini2018sampling}. The model represents an inverted pendulum with a spring-loaded wall on one side (see Fig. \ref{example_tikz}). The control input is the external torque.   
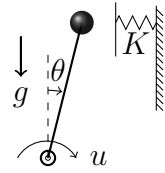
\begin{figure}[t]
\centering
\begin{tikzpicture}[scale=0.90]
\draw[thick] (0,0) -- (0.5,2);
\shade[ball color = black!99, opacity = 0.9] (0.5,2) circle (5pt);
\node[] at (0.15,1.3) {$\theta$};
\draw[thick,->] (-0.4, 1.8) -- (-0.4,1.2);
\node[] at (-0.4,0.9) {$g$};
\node[] at (1.3,1.7) {$K$};
\draw[] (1.6,0.7) -- (1.6,2.25);
\draw[] (1,1.5) -- (1,2.3);
\draw[] (1,2) -- (1.05,2.1) -- (1.15,1.9) -- (1.25,2.1) -- (1.35,1.9) -- (1.45,2.1) -- (1.55,1.9) -- (1.6,2);
\foreach \i in {1,...,15}{
\draw[] (1.6, 2.3-0.1*\i) -- (1.7,2.2-0.1*\i);}
\draw[line width=0.28mm,color=black] (0,0) circle (3pt);
\draw[line width=0.1mm] (0,0) circle (1pt);
\draw[->] (0,1) arc (90:78:1);
\draw[line width=0.1pt,dashed] (0,0) -- (0.0,1.6);
\draw[->] (-0.45,0) arc (150:30:0.5);
\node[] at (0.75,0) {$u$};
\end{tikzpicture}
\caption{Example \ref{example_tobia}: Inverted pendulum with a spring-loaded wall.}
\label{example_tikz}
\end{figure} 
The system is constrained to $|\theta| \le 0.12$, $|\dot{\theta}| \le 1$, $|u| \le 4$, and the wall is situated at $\theta=0.1$. The problem is to identify a set of states $\mathbb{X} \in \mathbb{R}^2$ and the associated control law $\mu: [-0.12,0.12] \times [-1,1] \rightarrow [-4,4]$ such that all states in $\mathbb{X}$ are steered toward origin in finite time, while respecting the constraints. It is desired that $\mathbb{X}$ is as large as possible. The dynamical system is described as a hybrid system with two modes associated with ``contact-free" and ``contact". The piecewise affine dynamics is given as:
\begin{equation*}
A_1=
\left(
\begin{array}{cc}
1 & 0.01 \\
0.1 & 1
\end{array}
\right),
A_2=
\left(
\begin{array}{cc}
1 & 0.01 \\
-9.9 & 1
\end{array}
\right),
\end{equation*} 
\begin{equation*}
B_1=B_2=
\left(
\begin{array}{c}
0 \\ 0.01
\end{array}
\right),
c_2=
\left(
\begin{array}{c}
0 \\ 0 
\end{array}
\right) ,
c_2=
\left(
\begin{array}{c}
0 \\ 1
\end{array}
\right),
\end{equation*} 
where mode 1 and 2 correspond to contact-free $\theta \le 0.1$ and contact dynamics $\theta >0.1$, respectively.

\begin{figure*}[t]
\centering
\includegraphics[width=0.24\textwidth]{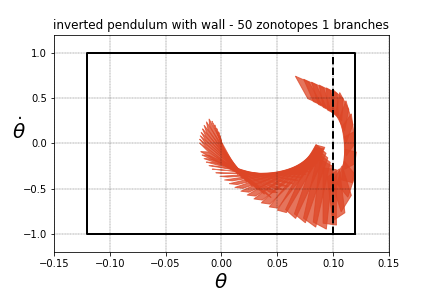}
\includegraphics[width=0.24\textwidth]{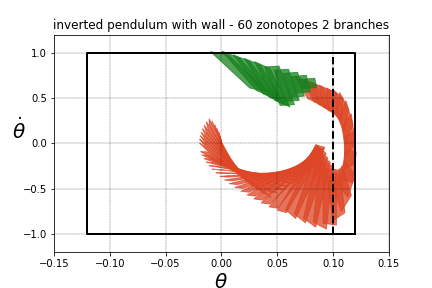}
\includegraphics[width=0.24\textwidth]{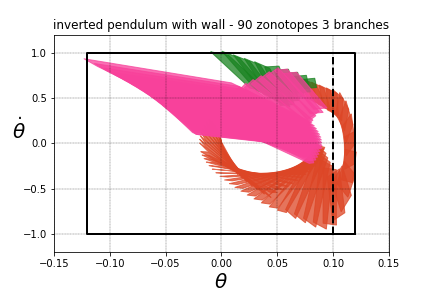}
\includegraphics[width=0.24\textwidth]{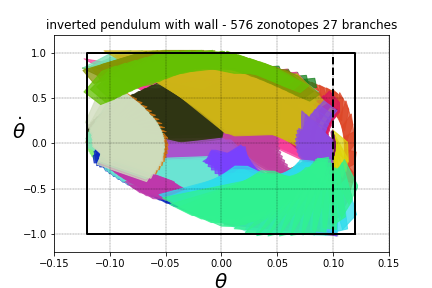}
\label{fig_polytopic_tree}
\caption{Example \ref{example_tobia}: Polytopic tree algorithm for control of a hybrid system: at each iteration, a point is sampled from the free space. Then a mixed-integer convex program is used to find a polytopic trajectory into one of the zonotopes already existing in the tree, where the results in this paper are used to encode the containment property. If such a trajectory exists, the branch is added to the tree and the iterations continue. The final result is a tree of zonotopes from which a hybrid control law steering the states to the goal is obtained.}
\end{figure*} 

The approach in \cite{marcucci2017approximate} was based on finding the feasible set of hybrid model predictive control, but the horizon was limited to $N=10$, hence the derived $\mathbb{X}$ was small. The approach in \cite{sadraddini2018sampling}, based on sampling-based polytopic trees described in this section, found larger $\mathbb{X}$. The base polytope $\mathbb{P}_b$ is chosen to be a box, hence all polytopes are zonotopes. The polytopic trajectory design is handled using a mixed-integer convex program. Here we implement the same polytopic tree algorithm with the difference we use the zonotope containment result in Theorem \ref{thm_zono} for constraining the final polytope constraint. Not only did this lead to slightly faster computations, but we also observed that the optimization solver no longer reported numerical tolerance issues.

The iterations of the algorithm are shown in Fig. \ref{fig_polytopic_tree}. Implementing the control law $\mu$ requires two cheap subroutines of finding the appropriate zonotope containing the current point and implementing the zonotope-to-zonotope control law, as opposed to the complete solution of solving a mixed-integer convex program (MICP) for the hybrid MPC with a long enough horizon. Out of 1000 points uniformly sampled from $[-0.12,0.12] \times [-1,1]$, 810 are within the feasible set of hybrid MPC with horizon $N=80$ - this set is never explicitly computed, but only we can check if $(\theta,\dot{\theta})$ belongs to it by solving a MICP problem.
After 27 branches, we observed that 796 of 810 points are inside the tree, yielding an approximate coverage of $98\%$. It was shown in \cite{sadraddini2018sampling} that full coverage is asymptotically achieved as the number of samples in the tree goes to infinity. As mentioned earlier, the full details are omitted here and the interested reader is referred to \cite{sadraddini2018sampling}. The scripts for this example are also publicly available  \footnote{\url{https://github.com/sadraddini/PWA-Control/blob/master/PWA_lib/polytree/test/inverted_pendulum_single_wall.py} }. 
 
\end{example}
%
%
%
%

\section{Conclusion and Future Work}
We studied a broad range of polytope containment problems. We provided a general sufficient conditions for containment of a polytope inside another polytope, where both polytopes are represented by affine transformations of hyperplane-represented polytopes. The significance of the method relies on the fact that the encodings do not require computing the hyperplanes of the affine transformations of the polytopes, which can be computationally prohibitive. Instead, the encoding provides a set of linear constraints with size growing linearly in the problem size.

We provided interpretations for the sufficiency, the conditions for necessity, and focused on special cases typically encountered in polytopic problems such as zonotopes, Minkowski sums, convex hulls, and disjunctive containment.  We presented the usefulness of our results on a number of applications in control theory. 

Future work will focus more deeply on applications to verification and control of hybrid systems. More specifically, we plan to use the results on disjunctive zonotope containment and zonotope in the convex hull of zonotopes to obtain more sparse polytopic trees. The convex hull of two consecutive zonotopes is a reasonable approximation for the states traversed between them in continuous time. Applications include robotic manipulation, for which fast, hybrid, correct-by-design controllers are sought. 

\bibliographystyle{IEEEtran}
\bibliography{ana_references}
\end{document}